\def\Complex{\mathbb{C}}
\def\A{\mathcal{G}}
\def\Ar{\mathcal{G}_r)}
\def\cqg {compact quantum group}
\newtheorem{theorem}{Theorem}[section]
\newtheorem{corollary}[theorem]{Corollary}
\newtheorem{lemma}[theorem]{Lemma}
\newtheorem{proposition}[theorem]{Proposition}
\newtheorem*{theorem*}{Theorem}
\newtheorem*{proposition*}{Proposition}
\theoremstyle{definition}
\newtheorem{definition}[theorem]{Definition}
\newtheorem{remark}[theorem]{Remark}
\newtheorem{example}[theorem]{Example}
\newtheorem{conjecture}[theorem]{Conjecture}
\newcommand{\vv}{\vspace{4mm}\\}
\begin{document}
\title{Invariant subsets under compact quantum group actions}
\author{Huichi Huang}
\address{H. Huang, Mathematisches Institut, Einsteinstr. 62,  M\"unster, 48149, Germany}
\email{huanghuichi@uni-muenster.de}
\keywords{Compact quantum group, action, invariant state, invariant subset, compact Hausdorff space, orbit}
\subjclass[2010]{Primary: 46L65; Secondary:16W22}
\date{\today}
\begin{abstract}
 We investigate compact quantum group actions on unital $C^*$-algebras by analyzing invariant subsets and invariant states. In particular, we come up with the concept of compact quantum group orbits  and use it to show that countable compact metrizable spaces with infinitely many points are not quantum homogeneous spaces.
\end{abstract}

\maketitle
\tableofcontents

\section{Introduction}

A compact  quantum group is a unital $C^*$-algebra $A$ together with a unital $*$-homomorphism $\Delta:A\to A\otimes A$ satisfying the coassociativity $$(\Delta\otimes id)\Delta=(id\otimes \Delta)\Delta$$
and the cancelation laws that both $\Delta(A)(1\otimes A)$ and $\Delta(A)(A\otimes 1)$ are dense in $A\otimes A$. If $A$ is a commutative $C^*$-algebra, then $A=C(G)$ for some compact group $G$.  From the viewpoint of noncommutative topology $A=C(\mathcal{G})$ for some compact quantum space $\mathcal{G}$. So compact quantum groups are generalizations of compact groups. There are lots of similarities and differences between these two. For instance, firstly both of them both have the unique bi-invariant state called the Haar measure. But unlike the Haar measure of a compact group, the Haar measure of a compact quantum group need to be neither faithful nor tracial. Secondly, although there is a linear functional called the counit which plays the same role in a compact quantum group as the unit in a compact group,  the counit is only densely defined and not necessarily bounded.

An action of a compact quantum group $\mathcal{G}$ on a unital C*-algebra $B$ is a unital $*$-homomorphism $\alpha:B\rightarrow B\otimes A$ satisfying that
\begin{enumerate}
\item $(\alpha\otimes id)\alpha=(id\otimes \Delta)\alpha$;
\item $\alpha(B)(1\otimes A)$ is dense in $B\otimes A$.
\end{enumerate}
If $A=C(G)$ for some compact group $G$ and $B=C(X)$ for some compact Hausdorff space $X$, then the action $\alpha$ is just the action of $G$ on $X$ as homeomorphisms. Therefore actions of compact quantum groups on unital $C^*$-algebras are generalizations of compact groups on compact Hausdorff spaces. Moreover when a group acts on a space, the group elements are symmetries on the space. So when a compact quantum group $\mathcal{G}$ acts a unital $C^*$-algebra $B$, then $\mathcal{G}$ can be understood as a set of quantum symmetries of the compact quantum space $B$.

A compact quantum group action $\alpha$ of $\mathcal{G}$ on $B$ is called ergodic if $\{b\in B|\alpha(b)=b\otimes 1\}=\mathbb{C}$. If $\mathcal{G}$ is a compact group and $B=C(X)$ for a compact Hausdorff space $X$, then $\alpha$ is ergodic just means that the action is transitive. In this case $X$ is called homogeneous.  Generalizing the classical homogeneous space, we call a unital $C^*$-algebra $B$ a homogeneous space if $B$ admits an ergodic compact group action or a quantum homogeneous space if $B$ admits an ergodic compact quantum group action. Note that there are different definitions of quantum homogeneous spaces~(see~\cite{Vaes2005} for example) we adopt the one given by P. Podle\'{s} in~\cite[Definition 1.8]{Podles1995}.

A compact group is a compact quantum group, hence a homogeneous space is a quantum homogeneous space. However, the converse is not true.

It was shown by H\o egh-Krohn, Landstad and St\o rmer that a homogeneous space has a finite trace~\cite{HLS1981}. But the class of quantum homogeneous spaces includes operator algebras of some other types. For instance, S. Wang showed that some type III factors and Cuntz algebras are quantum homogeneous spaces~\cite{Wang1999}. So there exists compact quantum spaces which are quantum homogeneous space, but not homogeneous. Thus on some compact quantum spaces, namely Cuntz algebra, although there are no enough symmetries to make these spaces to be homogeneous spaces, there are enough quantum symmetries such that these spaces are quantum homogeneous space.

But when one considers compact quantum group actions on classical compact spaces, the situation is quite different. So far, all classical quantum homogeneous spaces  are  homogeneous spaces~\cite{Wang1998,Wang1999,BGS2011}. This means that on a classical compact space, if there are no enough symmetries, then there are no enough quantum symmetries.  This interesting phenomena leads us to conjecture that a compact Hausdorff space is a quantum homogeneous space if and only if it is a homogeneous space. Our main result in the paper is to confirm this conjecture in the case of compact Hausdorff spaces with countably infinitely many points.

\begin{theorem}\label{quantum homogenous space}
Any compact Hausdorff space with countably infinitely many points is not a quantum homogeneous space.
\end{theorem}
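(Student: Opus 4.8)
The plan is to argue by contradiction. Suppose $X$ is a compact Hausdorff space with countably infinitely many points and that some compact quantum group $\mathcal{G}=(A,\Delta)$ acts ergodically on $C(X)$ via a unital $*$-homomorphism $\alpha$; I will derive a contradiction between the topology of $X$ and the analytic consequences of ergodicity. Two preliminary facts set the stage. On the topological side: a countable compact Hausdorff space is scattered (a nonempty perfect subset of a compact Hausdorff space is uncountable), so the set $X_0$ of isolated points of $X$ is dense, hence infinite, and every nonempty closed subset of $X$ is again a countable compact Hausdorff space, so it too has an isolated point. On the analytic side: every compact quantum group action on a unital $C^*$-algebra has an invariant state, namely $\psi\circ E$ for any state $\psi$, where $E=(\mathrm{id}\otimes h)\alpha$ is the conditional expectation onto the fixed-point algebra and $h$ is the Haar state; and for an \emph{ergodic} action this invariant state is unique and \emph{faithful}. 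For $B=C(X)$ this says $\alpha$ determines a faithful $\alpha$-invariant Borel probability measure $\mu$ on $X$, so $\mu(U)>0$ for every nonempty open $U$, and in particular $\mu(\{x\})>0$ for every $x\in X_0$.

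Next I would develop the notion of the $\mathcal{G}$-orbit of a point: for $x\in X$, let $\mathcal{G}\cdot x$ be the smallest closed $\alpha$-invariant subset of $X$ containing $x$, where a closed $Y\subseteq X$ is called $\alpha$-invariant if $\alpha$ descends to an action on $C(Y)$, equivalently $\alpha(I_Y)\subseteq\overline{I_Y\otimes A}$ with $I_Y=\{f\in C(X):f|_Y=0\}$. Minimal nonempty closed invariant sets exist by Zorn's lemma together with compactness, so $\mathcal{G}\cdot x$ is well defined and is itself a minimal system. The central claim is that \emph{the orbit of an isolated point is finite}. To prove it, fix $x\in X_0$ and set $p=\mathbf{1}_{\{x\}}$, a minimal (rank-one) projection in $C(X)$. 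Identifying $C(X)\otimes A\cong C(X,A)$, the element $\alpha(p)$ is a norm-continuous $A$-valued function with projection values, so $z\mapsto\|\alpha(p)(z)\|$ is $\{0,1\}$-valued and continuous, whence $\mathcal{O}:=\{z\in X:\alpha(p)(z)\neq0\}$ is clopen; moreover $h(\alpha(p)(x))=E(p)(x)=\mu(\{x\})>0$ by ergodicity, so $x\in\mathcal{O}$, and the orbit formalism identifies $\mathcal{O}$ with $\mathcal{G}\cdot x$. Thus $\mathcal{G}\cdot x$ is clopen and minimal. If it were infinite it would, being a countable compact Hausdorff space, have an isolated point; but the key structural input is that $\alpha$ \emph{preserves the set of isolated points} (equivalently, the atomic ideal of $C(X)$ is $\alpha$-invariant, equivalently $\alpha$ descends to the Cantor--Bendixson derivative $X'$), so the isolated points of $\mathcal{G}\cdot x$ form a nonempty proper open invariant subset; since on a minimal system an invariant open set is empty or everything, $\mathcal{G}\cdot x$ must be discrete, hence finite by compactness.

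Finally I would close the argument. Put $Y=\mathcal{G}\cdot x$, a finite, hence closed, nonempty $\alpha$-invariant subset, and $Y\neq X$ since $X$ is infinite. The quotient $C(X)\to C(Y)$ is $\alpha$-equivariant; pulling back any invariant state on $C(Y)$ gives an $\alpha$-invariant state on $C(X)$ supported in $Y\subsetneq X$, hence not faithful, which contradicts the uniqueness and faithfulness of $\mu$. (Equivalently, $Y$ finite clopen invariant makes $\mathbf{1}_Y$ a nontrivial fixed point, contradicting ergodicity directly.) Hence no ergodic compact quantum group action on $C(X)$ exists, and $X$ is not a quantum homogeneous space.

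I expect the main obstacle to be the structural fact that a compact quantum group action on $C(X)$ preserves the set of isolated points — equivalently, that it descends to the Cantor--Bendixson derivative, equivalently that the orbit of an isolated point is a set of isolated points. Classically this is trivial because homeomorphisms carry isolated points to isolated points, but quantumly it must be extracted from the action axioms (coassociativity and density of $\alpha(C(X))(1\otimes A)$) together with the characterization of minimal projections of $C(X)$ as the rank-one ones; this, and verifying that the clopen set $\mathcal{O}$ above is genuinely the smallest closed invariant set through $x$, are where the real work lies. The remaining ingredients — scatteredness of countable compact Hausdorff spaces, existence of invariant states, faithfulness and uniqueness of the invariant state for ergodic actions, and the passage from a finite invariant subset to a contradiction — are standard or routine.
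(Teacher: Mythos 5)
Your outline rests on two load-bearing claims that are not established, and both are exactly where the difficulty of this theorem lives. First, you assert as a standard fact that the unique invariant state of an ergodic action is \emph{faithful}. This is not available: at the $C^*$-level it already fails for noncommutative algebras (the Haar state is the unique invariant state of the ergodic action of $\mathcal{G}$ on $A$ by $\Delta$, and it need not be faithful, e.g.\ for universal quantum groups), and for $B=C(X)$ faithfulness of $\mu=(ev_x\otimes h)\alpha$ is equivalent to ${\rm supp}\,\mu_x=X$, which under ergodicity is precisely the statement ${\rm supp}\,\mu_x={\rm Orb}_x$ — the content of Conjecture~\ref{orbit minimal}, proved in the paper only for coamenable $\mathcal{G}$ (Theorem~\ref{coamenable implies minimal orbits}) or for finite/countable spaces, and in the countable case only \emph{as a consequence of} the main theorem. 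Ergodicity gives uniqueness of the invariant state (Theorem~\ref{ergodic and unique invariant measure}) but does not rule out proper invariant closed subsets, so ${\rm supp}\,\mu$ could a priori be proper. Both your opening step ($\mu(\{x\})>0$ for isolated $x$) and your closing contradiction (an invariant state supported on the finite orbit $Y\subsetneq X$ versus a faithful $\mu$) rely on this; the fallback that $1_Y$ is a nontrivial fixed point is also unjustified, since invariance of the ideal $J_Y$ alone does not yield $\alpha(1_Y)=1_Y\otimes 1$ (one would need invariance of the complement as well).

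Second, the structural input that the action preserves isolated points (equivalently, that the orbit of an isolated point consists of isolated points), together with the identification of the clopen set $\mathcal{O}=\{z:\alpha(e_x)(z)\neq 0\}$ with the smallest closed invariant set through $x$, is conceded by you to be ``where the real work lies'' and is not proved. In the paper the corresponding facts — Proposition~\ref{orbits in finite spaces} (only for finite spaces, via Theorem~\ref{erdodic action on finite spaces}) and Proposition~\ref{acp3} (orbits do not mix $X_I$ and $X_A$) — are obtained \emph{after} and \emph{using} the main theorem through Corollary~\ref{acc2}, so importing them as inputs would be circular relative to what is actually known; an independent proof is missing and is nontrivial. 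The paper's actual argument avoids both issues: it proves non-atomicity of the invariant measure for any ergodic action on an infinite compact Hausdorff space (Theorem~\ref{nonatomic inv measure}), the key mechanism being Lemmas~\ref{natm1} and~\ref{natm2} — if there were an atom, every function vanishing on the finite set of maximal atoms would be annihilated by $\alpha$, forcing the Podl\'{e}s algebra to be finite dimensional — after reducing to the reduced quantum group so that $h$ is faithful; the main theorem then follows because a Borel probability measure on a countable space must have an atom. As it stands, your proposal is a plausible sketch of a different strategy, but the two steps above are genuine gaps rather than routine verifications.
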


Theorem~\ref{quantum homogenous space} solves the conjecture for countable compact spaces.

To prove the main theorem, we use invariant subsets  and invariant states, formulate the concept of compact quantum group orbits and adopt them to study ergodic actions on compact spaces.

Our paper is organized as follows. In section 2  we collect some facts about compact quantum groups and their actions on unital $C^*$-algebras. In section 3, we derives some results about invariant subsets and invariant states which will be used later. Especially, we show that a compact quantum group action is ergodic iff there is a unique invariant state~(Theorem~\ref{ergodic and unique invariant measure}). Next we show that the ``support'' of an invariant state is an invariant subset~(Theorem~\ref{invariant support}) and show that as long as all invariant states are tricial or there exists a faithful tracial  invariant state, the compact quantum group is a Kac algebra~(Theorem~\ref{inv state is tracial haar measure is tracial}).  Section 4 is about compact quantum group actions on classical compact spaces.  We formulate the concept of orbits. Then we prove that an orbit is an invariant subset~(Theorem~\ref{invariance of orbit}) and that an action is ergodic iff there exists a unique orbit~(Theorem~\ref{a characterization of ergodic action by orbit}). Theorem~\ref{support is minimal} gives a characterization of   minimal invariant subsets and the relation between  minimal invariant subsets and orbits. Then we investigate actions on finite spaces and  prove Theorem~\ref{erdodic action on finite spaces} which will be used later to study orbits in spaces with countably points. In section 4.4 we prove Theorem~\ref{nonatomic inv measure} which says  the invariant measure on a quantum homogenous compact Hausdorff space with infinitely many points is non-atomic. In section 4.5, we apply results in previous subsections to study actions on compact spaces with countably infinitely many points and prove the main Theorem~\ref{quantum homogenous space}.

\section*{Acknowledgements}

I am grateful to Hanfeng Li for his long term support and encouragement. His advice is indispensable for the writing of this paper. Part of the paper was carried out during my staying in Chongqing University in the summer. I thank Dechao Zheng for his hospitality. The author also benefits from helpful discussions with Shuzhou Wang and would like to thank him for pointing out an error in Remark~\ref{rkac}. Lastly I express my gratitude to Piotr So{\l}tan for his comments and suggestions to the paper. The author currently is supported by ERC Advanced Grant No. 267079 and would like to thank it here.

\section{Preliminaries}

In this section, we recall some definitions and basic properties of compact quantum groups and their actions. We refer to \cite{Woronowicz1987, Woronowicz1998, MV1998} for basics of compact quantum groups and \cite{Podles1995,BOCA1995,LiH2009} for some background of compact quantum group actions.

Throughout this paper, for two unital C*-algebras $A$ and $B$, the notations $A\otimes B$ and $A\odot B$  stand for the minimal and the algebraic tensor product of $A$ and $B$ respectively.

For a $*$-homomorphism $\beta:B\rightarrow B\otimes A$, use $\beta(B)(1\otimes A)$  and $\beta(B)(B\otimes 1)$ to denote the linear span of the set $\{\beta(b)(1_B\otimes a)|b\in B, \,a\in A\}$ and  the linear span of the set $\{\beta(b_1)(b_2\otimes 1_A)|b_1, b_2\in B\}$ respectively.

For a C*-algebra $B$, we use $S(B)$ to denote the state space of $B$. For $\mu\in S(B)$, we denote $\{b\in B|\mu(b^*b)=0\}$ by $N_{\mu}$. If $N_{\mu}=\{0\}$, then $\mu$ is called faithful. If $\mu(ab)=\mu(ba)$ for all $a,b\in B$, then $\mu$ is called tracial.

 Let's first recall the definition of compact quantum group, which, briefly speaking, is the $C^*$-algebra of continuous functions on some compact quantum space with a group-like structure.

\begin{definition}[Definition~1.1 in~\cite{Woronowicz1998}]
A \textbf{compact quantum group} is a pair $(A,\Delta)$ consisting of a unital C*-algebra $A$ and a unital $*$-homomorphism $\Delta: A\rightarrow A\otimes A$ such that
\begin{enumerate}
\item $(id\otimes\Delta)\Delta=(\Delta\otimes id)\Delta$.
\item $\Delta(A)(1\otimes A)$ and $\Delta(A)(A\otimes 1)$ are dense in $A\otimes A$.
\end{enumerate}
\end{definition}

The $*$-homomorphism $\Delta$ is called the \textbf{coproduct} or \textbf{comultiplication} of $\A$. The first condition in the definition of compact quantum groups just says that the coproduct is associative, and the second condition says that the left cancellation law and the right cancellation law hold. Note that a compact semigroup in which cancellation laws hold is a group. Hence compact quantum groups are the quantum analogue of compact groups.

Furthermore, one can think of $A$ as $C(\mathcal{G})$, i.e., the C*-algebra of continuous functions on some quantum space $\mathcal{G}$ and in the rest of the paper we write  a compact quantum group $(A,\Delta)$ as $\mathcal{G}$.

There exists a unique state $h$ on $A$ such that $$(h\otimes id)\Delta(a)=(id\otimes h)\Delta(a)=h(a)1_A$$
 for all $a$ in $A$. The state $h$ is called the \textbf{Haar measure} of $\mathcal{G}$ or  the \textbf{Haar state} on $A$. Throughout this paper, we use $h$ to denote the Haar measure of $\mathcal{G}$.

\begin{example}[Examples of compact quantum groups]\label{examples of compact quantum groups}   \
\begin{enumerate}
\item For every non-singular $n \times n$ complex matrix $Q$ ($n > 1$),
 the universal compact quantum group $(A_u(Q), \Delta_Q)$~\cite[Theorem 1.3]{WV1996}
is generated by $u_{ij}$ ($i,j = 1 , \cdots, n$) with defining relations
(with $u = ( u_{ij} )$):
\vv
$ \; \;
u^* u = I_n = u u^*, \; \; \;
u^t Q {\bar u} Q^{-1} = I_n = Q {\bar u } Q^{-1} u^t$;
\vv
and the coproduct $\Delta_Q$ given by $\Delta_Q(u_{ij})=\sum_{k=1}^n u_{ik}\otimes u_{kj}$ for $1\leq i,j\leq n$. In particular, when $Q$ is the identity matrix, we denote $(A_u(Q), \Delta_Q)$ by $A_u(n)$.
\item The \textbf{ quantum permutation group} $(A_s(n),\Delta_n)$~\cite[Theorem 3.1]{Wang1998} is the universal $C^*$-algebra generated by $a_{ij}$ for $1\leq i,j\leq n$ under the relations
$$a_{ij}^*=a_{ij}=a_{ij}^2, \quad   \sum_{i=1}^n a_{ij}=\sum_{j=1}^n a_{ij}=1.$$
The coproduct $\Delta_n: A_s(n)\rightarrow A_s(n)\otimes A_s(n)$ is the $*$-homomorphism satisfying that
$$\Delta_n(a_{ij})=\sum_{k=1}^n a_{ik}\otimes a_{kj}.$$
\end{enumerate}
\end{example}

\begin{definition}
 Let $A$ be an associative $*$-algebra over $\Complex$ with an identity.  Assume
that $\Delta$ is a unital $*$-homomorphism from $A$ to
$A \odot A$ such that $(\Delta
\otimes id) \Delta = (id \otimes \Delta) \Delta$.  Also assume that there are
linear maps $\varepsilon : A \to \Complex$ and $\kappa : A \to A$ such that
$$(\varepsilon \otimes id)\Delta(a) = (id \otimes \varepsilon) \Delta(a) = a$$
$$m(\kappa \otimes id) \Delta(a) = m(id \otimes \kappa)\Delta(a) = \varepsilon(a)1$$
for all $a \in A$, where $m:A \odot A\to A$ is the multiplication map.
Then $(A, \Delta)$ is called a \textbf{Hopf $*$-algebra}~\cite[Definition 2.3]{MV1998}.
\end{definition}

A nondegenerate (unitary)~\textbf{representation} $U$ of  a compact quantum group $\mathcal{G}$ is an invertible~(unitary) element in $M(K(H)\otimes A)$ for some Hilbert space $H$ satisfying that $U_{12}U_{13}=(id\otimes \Delta)U$. Here $K(H)$ is the $C^*$-algebra of compact operators on $H$ and  $M(K(H)\otimes A)$ is the multiplier C*-algebra of  $K(H)\otimes A$.  We  write $U_{12}$ and
$U_{13}$ respectively for the images of  $U$ by two maps from $M(K(H)\otimes A)$ to $M(K(H)\otimes A\otimes A)$ where the first one is obtained by extending the map $x \mapsto x \otimes 1$ from $K(H) \otimes A$ to $K(H) \otimes A\otimes A$, and the second one is obtained by composing this map with the flip on the  last two factors. The Hilbert space $H$ is called the \textbf{carrier Hilbert space} of $U$. From now on, we always assume representations are nondegenerate. If the carrier Hilbert space $H$ is of finite dimension, then $U$ is called a finite dimensional representation of $\mathcal{G}$.

For two representations $U_1$ and $U_2$ with the carrier Hilbert spaces $H_1$ and $H_2$ respectively, the set of
 \textbf{intertwiners}  between $U_1$ and $U_2$, $Mor(U_1,U_2)$, is defined as
$$Mor(U_1,U_2)=\{T\in B(H_1,H_2)|(T\otimes 1)U_1=U_2(T\otimes 1)\}.$$
Two representations $U_1$ and $U_2$ are equivalent if there exists an invertible element $T$ in $Mor(U_1,U_2)$.
A representation $U$ is called \textbf{irreducible} if $Mor(U,U)\cong\Complex$.

Moreover, we have the following well-established facts about representations of compact quantum groups:
\begin{enumerate}
\item Every finite dimensional representation is equivalent to a unitary representation.
\item Every irreducible representation is  finite dimensional.
\end{enumerate}
Let $\widehat{\mathcal{G}}$ be the set of equivalence classes of irreducible representations of $\mathcal{G}$. For every $\gamma\in \widehat{\mathcal{G}}$, let $U^{\gamma}\in \gamma$  be unitary and $H_{\gamma}$ be its carrier Hilbert space with dimension $d_{\gamma}$. After fixing an orthonormal basis of $H_{\gamma}$, we can write $U^{\gamma}$ as  $(u^{\gamma}_{ij})_{1\leq i,j\leq d_{\gamma}}$ with $u^{\gamma}_{ij}\in A$. The matrix $\overline{U^{\gamma}}$ is still an irreducible representation~(not necessarily unitary) with the carrier Hilbert space $\overline{H_{\gamma}}$. It is called the \textbf{contragradient} representation of $U^\gamma$ and the equivalence class of $\overline{U^{\gamma}}$ is denoted by $\gamma^c$.
 There is a unique  positive invertible element $F^{\gamma}$ in $Mor(U^{\gamma},U^{\gamma^{cc}})$ such that $tr(F^{\gamma})=tr(F^{\gamma})^{-1}$. Denote $tr(F^{\gamma})$ by $M_{\gamma}$ and $M_{\gamma}$ is called the \textbf{ quantum dimension} of $\gamma$. Note that $F^{\gamma}>0$ is in $B(H_{\gamma})$ and can be expressed as a $d_{\gamma}\times d_{\gamma}$ matrix under the same orthonormal basis of $H_{\gamma}$ adopted by $U^{\gamma}$.

The linear space  $\mathscr{A}$ spanned by $\{u^{\gamma}_{ij}\}_{\gamma\in \widehat{\mathcal{G}},\,1\leq i,j\leq d_\gamma}$ is a Hopf $*$-algebra~\cite{Woronowicz1987,Woronowicz1998} such that
$$\Delta|_{\mathscr{A}}:\mathscr{A}\to \mathscr{A}\odot \mathscr{A},\qquad  \Delta(u^{\gamma}_{ij})=\sum_{m=1}^{d_{\gamma}}u^{\gamma}_{im}\otimes u^{\gamma}_{mj}.$$

Moreover, the following are true.
\begin{enumerate}
\item The Haar measure $h$ is \textbf{faithful} on $\mathscr{A}$, that is, if $h(a^*a)=0$ for an $a\in\mathscr{A}$, then $a=0$.
\item There exist uniquely a linear multiplicative functional $\varepsilon: \mathscr{A}\to \Complex$ and a linear antimultiplicative map $\kappa:\mathscr{A}\to\mathscr{A}$ such that
$$\varepsilon(u^{\gamma}_{ij})=\delta_{ij},\qquad \kappa(u^{\gamma}_{ij})=(u^{\gamma}_{ji})^*.$$ The two maps $\varepsilon$ and $\kappa$ are called the \textbf{counit} and the \textbf{antipodle} of $\mathcal{G}$ respectively.
\end{enumerate}

For $\gamma_1,\gamma_2\in \widehat{\mathcal{G}}$, $1\leq m,k\leq d_{\gamma_1}$ and $1\leq n,l\leq d_{\gamma_2}$, we have
\begin{equation}\label{eq:h1}
h(u^{\gamma_1}_{mk}u^{\gamma_2*}_{nl})=\frac{\delta_{\gamma_1\gamma_2}\delta_{mn}F^{\gamma_1}_{lk}}{M_{\gamma_1}},
\end{equation}
and
\begin{equation}\label{eq:h2}
h(u^{\gamma_1*}_{km}u^{\gamma_2}_{ln})=\frac{\delta_{\gamma_1\gamma_2}\delta_{mn}(F^{\gamma_1})^{-1}_{lk}}{M_{\gamma_1}}.
\end{equation}

A compact quantum group $(A',\Delta')$ is called a \textbf{quantum subgroup} of $\mathcal{G}$ if there exists a surjective $*$-homomorphism $\pi: A\to A'$ such that
$$(\pi\otimes \pi)\Delta=\Delta'\pi.$$ We can identify $A'$ with a quotient $C^*$-algebra of $A$, i.e., $A'\cong A/I$ for some ideal of $A$. We call the ideal $I$ a \textbf{Woronowicz $C^*$-ideal} of $A$. If we write $A'$ as $C(\mathcal{H})$ for some quantum space $\mathcal{H}$, we also call $\mathcal{H}$  a quantum subgroup of $\mathcal{G}$~\cite[Definition 2.13]{Wang1995}.

\begin{definition}[Definition 1.4 in \cite{Podles1995}]
An \textbf{action} of a compact quantum group $\mathcal{G}$ on a unital C*-algebra $B$ is a unital $*$-homomorphism $\alpha:B\rightarrow B\otimes A$ satisfying that
\begin{enumerate}
\item $(\alpha\otimes id)\alpha=(id\otimes \Delta)\alpha$;
\item $\alpha(B)(1\otimes A)$ is dense in $B\otimes A$.
\end{enumerate}
\end{definition}

An action $\alpha$ of a compact quantum group $\A$ on $B$ is called~\textbf{ergodic} if the fixed point algebra $B^{\alpha}=\{b\in B|\alpha(b)=b\otimes 1\}$ equals $\Complex1_B$.

Consider an action of $\mathcal{G}$ on $B$. For every $\gamma\in \widehat{\mathcal{G}}$, there is a  linear subspace $B_{\gamma}$ of $B$ with a basis $\mathscr S_{\gamma}=\{e_{\gamma ki}| k\in J_{\gamma}, 1\leq i\leq d_{\gamma}\}$ such that
$\alpha$ maps $B_{\gamma}$ into $B_{\gamma}\odot \mathscr{A}$  and $\alpha(e_{\gamma ki})=\sum_{j=1}^{d_{\gamma}}e_{\gamma kj}\otimes u^{\gamma}_{ji}$. Moreover $B_{\gamma}$ contains any other subspace of $B$ satisfying these two conditions.  The \textbf{quantum multiplicity} ${\rm mul}(B,\gamma)$ of $\gamma$ is defined as cardinality of $J_{\gamma}$, which does not depend on the choice of  $J_{\gamma}$~\cite[Thoerem 1.5]{Podles1995}. Moreover, $B_\gamma^*=B_{\gamma^c}$~\cite[Lemma 11]{BOCA1995}.
Hence  ${\rm mul}(B,\gamma)>0$ implies ${\rm mul}(B,\gamma^c)>0$.

Take
$\mathscr{B}=\bigoplus_{\gamma\in \widehat{\mathcal{G}}}B_{\gamma}$. It is known from~\cite[Thoerem 1.5]{Podles1995} that $\mathscr{B}$ is a dense $*$-subalgebra of $B$, which is called the \textbf{Podl\'{e}s algebra} of $B$. Also
$$\alpha|_{\mathscr{B}}:\mathscr{B}\to \mathscr{B}\odot \mathscr{A}, \qquad (id\otimes \varepsilon)\alpha|_{\mathscr{B}}=id_{\mathscr{B}}.$$

We say a bounded linear functional $\mu$ on $B$ is \textbf{$\alpha$-invariant} or briefly \textbf{invariant} if
$(\mu\otimes id)\alpha(b)=\mu(b)1_A$ for all $b\in B$. Denote by $Inv_{\alpha}$ the set of $\alpha$-invariant states on $B$.
It is known that $$Inv_{\alpha}=\{(\psi\otimes h)\alpha|\psi\in S(B)\}.$$

Suppose that  a compact quantum group $\mathcal{G}$ acts on $B_i$ by $\alpha_i$ for $i=1,2$. A unital $*$-homomorphism $s:B_1\rightarrow B_2$ is called
\textbf{equivariant} if
$$\alpha_2 \,s=(s\otimes id)\alpha_1.$$

Denote by $C(X)$ the $C^*$-algebra of complex-valued continuous functions on a compact Hausdorff space $X$. If a compact quantum group $\mathcal{G}$ acts on $B=C(X)$, then briefly we say that $\mathcal{G}$ acts on $X$.

\begin{definition}[Definition 1.8 in ~\cite{Podles1995}]
A unital $C^*$-algebra $B$ is called a \textbf{quantum homogeneous space} if $B$ admits an ergodic compact quantum group action.
\end{definition}
Briefly speaking, the investigation of actions of compact quantum groups on unital $C^*$-algebras is to study how compact quantum groups behave as symmetries of compact quantum spaces. Certainly there are many interesting examples of compact quantum group actions. Below we list some of them for later use, in particular, we give two examples of  compact quantum group actions on compact Hausdorff spaces.
\begin{example}[Examples of compact quantum group actions]\label{examples of CQG actions}  \
\begin{enumerate}
\item Every compact quantum group $\A$~acts on $A$ by the coproduct $\Delta$, and $\mathscr{A}$ is the Podl\'{e}s algebra of $A$.
\item The adjoint action $Ad_u$ of $(A_u(Q), \Delta_Q)$ on $M_n(\Complex)$ is given by
$$Ad_u(b)=u(b\otimes 1)u^*,$$ for every $b\in M_n(\Complex)$.
\item Recall that the Cuntz algebra $\mathcal{O}_n$~\cite{Cuntz1977} is the universal $C^*$-algebra generated by n($\geq 2$) isometries $S_1,S_2,...,S_n$ such that
$$\sum_{i=1}^n S_iS_i^*=1.$$
The compact quantum group $(A_u(Q), \Delta_Q)$  acts on $\mathcal{O}_n$ by $$\alpha(S_i)=\sum_{j=1}^n S_j\otimes u_{ji},$$ for $1\leq i\leq n$~\cite[Equation 5.2]{Wang1999}.
\item The quantum permutation group $A_s(n)$  acts on $X_n=\{x_1,x_2,\cdots,x_n\}$~\cite[Theorem 3.1]{Wang1998} by $$\alpha(e_i)=\sum_{j=1}^n e_j\otimes a_{ji},$$ where $e_i$ is the characteristic function  of $\{x_i\}$ for $1\leq i\leq n$.
\item  Let $Y$ be  a connected compact Hausdorff space and $Y_1$ is a closed subset of $Y$. Define an equivalence relation in $X_n\times Y$ as the following:
$(x_i,y)\sim (x_j,y)$ if $(x_i,y)=(x_j,y)$ or $y\in Y_1$. Then $A_s(n)$ acts on the connected compact space $X_n\times Y/\sim$ faithfully and the action $\alpha$ is given by
$$\alpha(\sum_{i=1}^n e_i\otimes f_i)=\sum_{i=1}^n\sum_{j=1}^n e_j\otimes f_i\otimes a_{ji}$$ for all $\sum_{i=1}^n e_i\otimes f_i\in C(X_n\times Y/\sim)$~\cite{Huang2013}.
  
\end{enumerate}
\end{example}

\section{Actions on compact quantum spaces}

\subsection{Faithful actions}

In this section, we give some equivalent conditions of faithful compact quantum group actions for future use. This is well known for experts, but for completeness and convenience, we give a proof here. Part of these results can be found in~\cite[Lemma 2.4]{Goswami2010}.

We first recall some definitions.
\begin{definition}[Definition 2.9 in ~\cite{Wang1995}]
For a compact quantum group $\mathcal{G}$, a unital C*-subalgebra $Q$ of $A$ is called a \textbf{compact quantum quotient group} of $\mathcal{G}$ if  $\Delta(Q)\subseteq Q\otimes Q$, and $\Delta(Q)(1\otimes Q)$ and $\Delta(Q)(Q\otimes 1)$ are dense in $Q\otimes Q$. That is, $(Q,\Delta|_{Q})$ is a compact quantum group. If $Q\neq A$, we call $Q$ a \textbf{proper compact quantum quotient group}.
\end{definition}

We say that  a \cqg~ action $\alpha$ on $B$ is \textbf{faithful} if there is no proper compact quantum quotient group $Q$ of $\A$ such that $\alpha$ induces an action $\alpha_q$ of $(Q,\Delta|_Q)$ on $B$ satisfying $\alpha(b)=\alpha_q(b)$ for all $b$ in $B$~\cite[Definition 2.4]{Wang1998}.

There are several equivalent descriptions of faithful actions.
\begin{proposition} \label{faithfulness}
Consider a \cqg~ action $\alpha$ of $\mathcal{G}$ on $B$. The following are equivalent:
\begin{enumerate}
\item The action $\alpha$ is faithful.
\item The $*$-subalgebra of $A$ generated by $(\omega\otimes id)\alpha(B)$ for all bounded linear functionals $\omega$ on $B$ is dense in $A$.
\item The $*$-subalgebra $\mathscr{A}_1$ of $\mathscr{A}$ generated by $(\omega\otimes id)\alpha(\mathscr{B})$ for all bounded linear functionals $\omega$ on $B$ is dense in $A$.
\item The $*$-subalgebra $\mathscr{A}_2$ of $\mathscr{A}$ generated by $u^{\gamma}_{ij}$ for all $\gamma\in \widehat{\mathcal{G}}$ and $1\leq i,j\leq d_{\gamma}$ such that ${\rm mul}(B,\gamma)>0$ is dense in $A$.
\item  $\mathscr{A}_2=\mathscr{A}$.
\end{enumerate}
\end{proposition}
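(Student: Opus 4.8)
The plan is to dispatch the ``soft'' equivalences $(2)\Leftrightarrow(3)\Leftrightarrow(4)\Leftrightarrow(5)$ first and then to isolate the one substantive point, $(1)\Leftrightarrow(5)$, where the quantum-group structure really enters. For the soft part I would begin by showing $\mathscr{A}_1=\mathscr{A}_2$: if ${\rm mul}(B,\gamma)>0$, fix $k\in J_\gamma$; then $\alpha(e_{\gamma ki})=\sum_j e_{\gamma kj}\otimes u^{\gamma}_{ji}$, and since $\{e_{\gamma kj}\}_j$ is linearly independent, choosing a bounded functional $\omega$ on $B$ with $\omega(e_{\gamma kj})=\delta_{jl}$ gives $(\omega\otimes id)\alpha(e_{\gamma ki})=u^{\gamma}_{li}$, while conversely every such slice lies in $\operatorname{span}\{u^{\gamma}_{ji}\}_j$. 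Hence the linear span of all $(\omega\otimes id)\alpha(\mathscr{B})$ is $\operatorname{span}\{u^{\gamma}_{ij}:{\rm mul}(B,\gamma)>0\}$; combining this with $B_\gamma^*=B_{\gamma^c}$ (so that ${\rm mul}(B,\gamma)>0\Rightarrow{\rm mul}(B,\gamma^c)>0$) and $\kappa(u^{\gamma}_{ij})=(u^{\gamma}_{ji})^*$ yields $\mathscr{A}_1=\mathscr{A}_2$, so (3) and (4) are literally the same assertion. The equivalence $(2)\Leftrightarrow(3)$ follows from the norm-density of $\mathscr{B}$ in $B$ and the continuity of $\alpha$, which put $(\omega\otimes id)\alpha(b)$ in $\overline{(\omega\otimes id)\alpha(\mathscr{B})}$ for every $b\in B$; and $(4)\Leftrightarrow(5)$ holds because $(5)\Rightarrow(4)$ trivially (as $\mathscr{A}$ is dense in $A$), while if ${\rm mul}(B,\gamma_0)=0$ for some $\gamma_0\in\widehat{\mathcal{G}}$ then the orthogonality relations~\eqref{eq:h1}--\eqref{eq:h2} make $u^{\gamma_0}_{ij}$ orthogonal to $\mathscr{A}_2$ in $L^2(h)$, so (using $\|\cdot\|_2\le\|\cdot\|$) a norm-dense $\mathscr{A}_2$ would force $h(u^{\gamma_0*}_{ij}u^{\gamma_0}_{ij})=0$, contradicting faithfulness of $h$ on $\mathscr{A}$ (note $h(u^{\gamma_0*}_{ij}u^{\gamma_0}_{ij})>0$ by~\eqref{eq:h2}).

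For the easy direction of $(1)\Leftrightarrow(5)$ I prove $\neg(1)\Rightarrow\neg(5)$: if $\alpha$ is not faithful, there is a proper compact quantum quotient group $Q\subsetneq A$ with $\alpha(B)\subseteq B\otimes Q$; applying to $\alpha(e_{\gamma ki})\in B\otimes Q$ (for $\gamma$ with ${\rm mul}(B,\gamma)>0$) a slice $(\omega\otimes id)$ with $\omega$ dual to $\{e_{\gamma kj}\}_j$ as above — and noting that $(\omega\otimes id)$ carries $B\otimes Q$ into $Q$ — yields $u^{\gamma}_{li}\in Q$ for all $l,i$, hence $\mathscr{A}_2\subseteq Q\subsetneq A$ and $\mathscr{A}_2\neq\mathscr{A}$.

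For the hard direction $(1)\Rightarrow(5)$, set $Q_0=\overline{\mathscr{A}_2}$. Because $\alpha(e_{\gamma ki})=\sum_j e_{\gamma kj}\otimes u^{\gamma}_{ji}$ with $u^{\gamma}_{ji}\in\mathscr{A}_2$ whenever $B_\gamma\neq 0$, continuity of $\alpha$ gives $\alpha(B)\subseteq B\otimes Q_0$. Moreover $\mathscr{A}_2$ is a (unital) Hopf $*$-subalgebra of $\mathscr{A}$: it is $\Delta$-, $\varepsilon$- and $\kappa$-stable by $\Delta(u^{\gamma}_{ij})=\sum_m u^{\gamma}_{im}\otimes u^{\gamma}_{mj}$, $\varepsilon(u^{\gamma}_{ij})=\delta_{ij}$, $\kappa(u^{\gamma}_{ij})=(u^{\gamma}_{ji})^*$, and also $\kappa^{-1}$-stable since $\kappa^{-1}=*\circ\kappa\circ*$. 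Therefore the Galois-type maps $x\otimes y\mapsto\Delta(x)(1\otimes y)$ and $x\otimes y\mapsto\Delta(x)(y\otimes 1)$ are already bijective on $\mathscr{A}_2\odot\mathscr{A}_2$ (with inverses built from $\kappa$, $\kappa^{-1}$ and $\varepsilon$), so $\Delta(\mathscr{A}_2)(1\odot\mathscr{A}_2)=\Delta(\mathscr{A}_2)(\mathscr{A}_2\odot 1)=\mathscr{A}_2\odot\mathscr{A}_2$ algebraically; passing to norm closures shows that $(Q_0,\Delta|_{Q_0})$ obeys both cancellation laws and is thus a compact quantum quotient group of $\mathcal{G}$. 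Likewise the map $b\otimes a\mapsto\alpha(b)(1\otimes a)$ is bijective on $\mathscr{B}\odot\mathscr{A}$, with inverse $b\otimes a\mapsto b_{(0)}\otimes\kappa(b_{(1)})a$ (using coassociativity of $\alpha$ and $(id\otimes\varepsilon)\alpha|_{\mathscr{B}}=id_{\mathscr{B}}$), and it restricts to a bijection of $\mathscr{B}\odot\mathscr{A}_2$; so $\alpha(\mathscr{B})(1\odot\mathscr{A}_2)=\mathscr{B}\odot\mathscr{A}_2$, and closing up makes $\alpha(B)(1\otimes Q_0)$ dense in $B\otimes Q_0$. Hence $\alpha$ induces a compact quantum group action $\alpha_q$ of $(Q_0,\Delta|_{Q_0})$ on $B$ with $\alpha_q=\alpha$ (its coassociativity axiom being inherited from that of $\alpha$), and faithfulness of $\alpha$ forces $Q_0=A$, i.e. $\mathscr{A}_2$ is dense in $A$, which by $(4)\Leftrightarrow(5)$ is (5).

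The one point requiring genuine care — the main obstacle — is verifying that $Q_0=\overline{\mathscr{A}_2}$ really is a compact quantum quotient group and that $\alpha_q$ really is an action, i.e. the two pairs of density/cancellation conditions; everything else is bookkeeping with the spectral decomposition $\mathscr{B}=\bigoplus_\gamma B_\gamma$ and the orthogonality relations. What makes the obstacle tractable is that the relevant Galois maps are bijective already at the \emph{algebraic} level of the Hopf $*$-algebra $\mathscr{A}$ and the Podle\'s algebra $\mathscr{B}$ — precisely where the antipode $\kappa$, its inverse $\kappa^{-1}$, the counit $\varepsilon$, and coassociativity get used — so the $C^*$-algebraic density statements come for free by taking norm closures.
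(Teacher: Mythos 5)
Most of your argument is sound, and your treatment of $(1)\Rightarrow(4)$ via the Hopf-algebraic Galois maps on $\mathscr{A}_2\odot\mathscr{A}_2$ and $\mathscr{B}\odot\mathscr{A}_2$ is a legitimate (and arguably cleaner) alternative to the paper's explicit computations with the unitarity relations $\sum_t\Delta(u^{\gamma}_{it})(1\otimes u^{\gamma*}_{jt})=u^{\gamma}_{ij}\otimes 1$ and $\sum_t\alpha(e_{\gamma kt})(1\otimes u^{\gamma*}_{it})=e_{\gamma ki}\otimes 1$. But your proof of $(4)\Rightarrow(5)$ contains a genuine error, and it is load-bearing: you invoke it again at the very end of your ``hard direction,'' whose Q$_0$-argument only yields density of $\mathscr{A}_2$, i.e.\ statement (4), not (5).

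The error is the claim that if ${\rm mul}(B,\gamma_0)=0$ then $u^{\gamma_0}_{ij}$ is orthogonal to $\mathscr{A}_2$ in $L^2(h)$. The orthogonality relations only make $u^{\gamma_0}_{ij}$ orthogonal to the \emph{linear span} of the matrix coefficients $u^{\gamma}_{kl}$ with $\gamma\neq\gamma_0$; but $\mathscr{A}_2$ is the $*$-algebra they generate, and a product $u^{\gamma_1}_{ij}u^{\gamma_2}_{kl}$ decomposes into matrix coefficients of the irreducible subrepresentations of $\gamma_1\otimes\gamma_2$, which may very well include $\gamma_0$ even though ${\rm mul}(B,\gamma_0)=0$. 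Concretely, for the quantum permutation group $A_s(n)$ ($n\geq 4$) acting on $X_n$ (or already for $S_n$ acting on $n$ points), only the trivial and the $(n-1)$-dimensional representation occur in $C(X_n)$, yet $\mathscr{A}_2=\mathscr{A}$ is dense and the remaining irreducibles, all of multiplicity zero in $B$, are certainly not orthogonal to $\mathscr{A}_2$; your argument would ``prove'' this action is not faithful. So the implication $(4)\Rightarrow(5)$ cannot be obtained from orthogonality of single matrix coefficients. The correct route, and the one the paper takes, is structural: $\mathscr{A}_2$ is a Hopf $*$-subalgebra of $\mathscr{A}$ (you already verify the needed stability under $\Delta$, $\varepsilon$, $\kappa$), and a compact quantum group has a unique dense Hopf $*$-subalgebra, namely $\mathscr{A}$ itself~\cite[Theorem A.1]{BMT2001}; hence density of $\mathscr{A}_2$ forces $\mathscr{A}_2=\mathscr{A}$. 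With that substitution your overall scheme closes up correctly.
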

\begin{proof}
$(2)\Rightarrow (1)$.
Suppose that the action $\alpha$ of $\mathcal{G}$ on $B$ induces an action $\alpha_q$ of  a quotient group $Q$ of $\mathcal{G}$  on $B$  such that $\alpha(b)=\alpha_q(b)$ for all $b$ in $B$. The $*$-subalgebra generated by $(\omega\otimes id)\alpha(B)$ for all bounded linear functional $\omega$ on $B$ is a subalgebra of $Q$. Hence $Q=A$ and $\alpha$ is faithful.

$(1)\Rightarrow (4)$.
Let $A_2$ be the closure of $\mathscr{A}_2$ in $A$.  We want to show that $(A_2,\Delta|_{A_2})$ is a quotient group of $\mathcal{G}$. First, since $\Delta({\mathscr{A}_2})\subseteq \mathscr{A}_2\odot \mathscr{A}_2$, we have that $\Delta(A_2) \subseteq A_2\otimes A_2$.

We next  show that $\Delta(A_2)(1\otimes A_2)$ is dense in $A_2\otimes A_2$. Since $u^{\gamma}$ is unitary for all $\gamma\in \widehat{\mathcal{G}}$ with ${\rm mul}(B,\gamma)>0$, we first have
$$\sum_{t=1}^{d_\gamma}\Delta(u^{\gamma}_{it})(1\otimes u^{\gamma*}_{jt})=u^{\gamma}_{ij}\otimes 1,$$  for all $1\leq i,j\leq d_{\gamma}$.
Note that $\sum_{t=1}^{d_\gamma}\Delta(u^{\gamma}_{it})(1\otimes u^{\gamma*}_{jt})$ belongs to $\Delta(A_2)(1\otimes A_2)$, so does $u^{\gamma}_{ij}\otimes 1$ for all $1\leq i,j\leq d_{\gamma}$. It follows that $$u^{\gamma_1}_{ij}u^{\gamma_2}_{kl}\otimes 1\in \Delta(A_2)(1\otimes A_2)(u^{\gamma_2}_{kl}\otimes 1)=\Delta(A_2)(u^{\gamma_2}_{kl}\otimes 1)(1\otimes A_2)\subseteq \Delta(A_2)(1\otimes A_2)$$ for all $\gamma_1,\gamma_2\in \widehat{\mathcal{G}}$ with positive multiplicity in $B$ and  all  $1\leq i,j\leq d_{\gamma_1}$ and $1\leq k,l\leq d_{\gamma_2}$. Inductively $u^{\gamma_1}_{i_1j_1}\cdots u^{\gamma_s}_{i_sj_s}\otimes 1\in\Delta(A_2)(1\otimes A_2)$ for all $\gamma_1,\cdots,\gamma_s\in \widehat{\mathcal{G}}$ with positive multiplicity in $B$ and  all  $1\leq i_t,j_t\leq d_{\gamma_t}$ with $1\leq t\leq s$.

Note that  $\mathscr{A}_2$ is the $*$-subalgebra of $\mathscr{A}$ generated by the matrix elements of $u^{\gamma}$ for all $\gamma\in \widehat{\mathcal{G}}$ with ${\rm mul}(B,\gamma)>0$. Also the adjoint of the matrix elements of  $u^{\gamma}$ are the matrix elements of  $u^{\gamma^c}$, the contragradient representation of $\gamma$.  Hence $\mathscr{A}_2$ is the subalgebra of $\mathscr{A}$ generated by the matrix elements of $u^{\gamma}$ for all $\gamma\in \widehat{\mathcal{G}}$ with positive  multiplicity in $B$. So $A_2\otimes 1$ is in the closure of $\Delta(A_2)(1\otimes A_2)$. Then for any $a,b\in A_2$, we have $a\otimes b=(a\otimes1)(1\otimes b)$ is in the closure of $\Delta(A_2)(1\otimes A_2)$ since $\Delta(A_2)(1\otimes A_2)(1\otimes b)\subseteq \Delta(A_2)(1\otimes A_2)$. Hence $\Delta(A_2)(1\otimes A_2)$ is dense in $A_2\otimes A_2$.

Similarly, we can prove that $1\otimes u^{\gamma *}_{ij}\in \Delta(A_2)(A_2\otimes 1)$ for all $\gamma\in \widehat{\mathcal{G}}$ with ${\rm mul}(B,\gamma)>0$ and all $1\leq i,j\leq d_{\gamma}$, and that $\Delta(A_2)(A_2\otimes 1)$ is dense in $A_2\otimes A_2$. Therefore, $A_2$ is a compact quantum quotient group of $A$. Next we show that $\alpha$ is an action of $(A_2,\Delta|_{A_2})$ on $B$.

Obviously $\alpha(B)\subseteq B\otimes A_2$. To show that $\alpha(B)(1\otimes A_2)$ is dense in $B\otimes A_2$, it is enough to prove that $e_{\gamma ki}\otimes 1\in \alpha(B)(1\otimes A_2)$ for all $\gamma\in \widehat{\mathcal{G}}$  such that ${\rm mul}(B,\gamma)>0$ and all $1\leq i\leq d_{\gamma}$ and  $1\leq k\leq {\rm mul}(B,\gamma)$. This follows from the following identity:
$$\sum_{t=1}^{d_\gamma}\alpha(e_{\gamma kt})(1\otimes u^{\gamma*}_{it})=e_{\gamma ki}\otimes 1.$$
Hence $\alpha$ is also an action of $A_2$ on $B$.  By the faithfulness of $\alpha$, we have that $A_2=A$.

$(3)\Leftrightarrow(4)$.
To prove the equivalence of (3) and (4), it suffices to show that $\mathscr{A}_1=\mathscr{A}_2$.
Obviously $\mathscr{A}_1\subseteq \mathscr{A}_2$. For  $\gamma\in \widehat{\mathcal{G}}$  such that ${\rm mul}(B,\gamma)>0$, we have that $\alpha(e_{\gamma ki})=\sum_{j=1}^{d_{\gamma}}e_{\gamma kj}\otimes u^{\gamma}_{ji}$ for $1\leq i\leq d_{\gamma}$ and  $1\leq k\leq {\rm mul}(B,\gamma)$. Note that $e_{\gamma ki}$'s are linearly independent. For every $1\leq s\leq d_{\gamma}$ and  every $1\leq l\leq {\rm mul}(B,\gamma)$, by the Hahn-Banach Theorem, there exists a bounded linear functional $\omega^{\gamma}_{ls}$ on $B$  such that $\omega^{\gamma}_{ls}(e_{\gamma ki})=\delta_{kl}\delta_{si}$ for $1\leq i\leq d_{\gamma}$ and  $1\leq k\leq {\rm mul}(B,\gamma)$. Therefore $(\omega^{\gamma}_{ks}\otimes id)\alpha(e_{\gamma ki})= u^{\gamma}_{si}\in \mathscr{A}_1$ for all $\gamma\in \widehat{\mathcal{G}}$  such that ${\rm mul}(B,\gamma)>0$, and for every $1\leq i\leq d_{\gamma}$ and  every $1\leq s\leq {\rm mul}(B,\gamma)$ . This implies that $\mathscr{A}_2\subseteq \mathscr{A}_1$, which proves the equivalence of (3) and (4).

$(2)\Leftrightarrow(3)$. The equivalence of (2) and (3) is immediate from the density of $\mathscr{B}$ in $B$ and the continuity of $(\omega\otimes id)\alpha$ for every bounded linear functional $\omega$ on $B$.

$(4)\Leftrightarrow(5)$. It is obvious that (5) implies (4). Now suppose that (4) is true. The $*$-subalgebra $\mathscr{A}_2$ is a Hopf $*$-subalgebra of $A$. A compact quantum group has a unique dense Hopf $*$-subalgebra~\cite[Theorem A.1]{BMT2001},  so (5) follows.
\end{proof}

\subsection{Invariant states}

In this subsection, we prove Theorem~\ref{l:T bijective} and Theorem~\ref{ergodic and unique invariant measure}.

First, for a compact quantum group, there is a reduced version of it in which the Haar measure is faithful~\cite[Theorem 2.1]{BMT2001}.

For a compact quantum group $\mathcal{G}$ with the Haar measure $h$ and the counit $\varepsilon$, let $N_h=\{a\in A|h(a^*a)=0\}$ and $\pi_r:A\to A/N_h$ be the quotient map. Then $N_h$ is a two-sided ideal of $A$~\cite[Proposition 7.9]{MV1998}. Furthermore, the following is true.
\begin{theorem*}[Theorem 2.1 of ~\cite{BMT2001}]
For a \cqg $\mathcal{G}$, the $C^*$-algebra $A_r=A/N_h$
is a compact quantum subgroup of $\A$ with
coproduct $\Delta_r$  determined by $\Delta_r (\pi_r(a))=(\pi_r\otimes \pi_r)\Delta(a)$, for all $a\in A$.
The Haar measure $h_r$ of $(A_r,\Delta_r)$  is given by $h=h_r\pi_r$ and $h_r$ is
faithful. Also, the quotient map $\pi_r$ is injective on~$\mathscr{A}$ and
the Hopf $*$-algebra of $(A_r,\Delta_r)$ is $\pi_r(\mathscr{A})$, with the counit $\varepsilon_r$ and the antipodle $\kappa_r$
determined by $\varepsilon= \varepsilon_r\pi_r$ and $ \pi_r \kappa = \kappa_r\pi_r$, respectively.
\end{theorem*}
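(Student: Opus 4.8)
The plan is to realize everything on the GNS space of $h$ and then transport the compact–quantum–group structure from $\mathcal{G}$ along the quotient map. First I would note that $a\mapsto h(a^*a)^{1/2}=\|\Lambda(a)\|$ is a continuous seminorm, so $N_h$ is a closed left ideal; invoking \cite[Proposition 7.9]{MV1998} that $N_h$ is a two–sided ideal (hence self–adjoint, being a closed two–sided ideal of a $C^*$-algebra), $A_r=A/N_h$ is a $C^*$-algebra and $\pi_r\colon A\to A_r$ a surjective unital $*$-homomorphism. Cauchy--Schwarz gives $|h(a)|^2\le h(a^*a)$, so $h$ kills $N_h$ and descends to a state $h_r$ on $A_r$ with $h=h_r\pi_r$; moreover $h_r(\pi_r(a)^*\pi_r(a))=h(a^*a)$ vanishes exactly when $\pi_r(a)=0$, so $h_r$ is automatically faithful, and since $A_r$ is unital its GNS representation is faithful as well.

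The crux is that $\Delta$ descends, i.e. $\Delta(N_h)\subseteq\ker(\pi_r\otimes\pi_r)$, and here I would bring in the Kac--Takesaki (multiplicative unitary) operator. Writing $\Lambda\colon A\to H_h$ for the GNS map with cyclic vector $\Omega=\Lambda(1)$ and $\pi_h$ for the GNS representation, one checks $\ker\pi_h=N_h$ (using that $N_h$ is a two–sided ideal), so $\pi_h=\widetilde\pi\,\pi_r$ with $\widetilde\pi\colon A_r\to B(H_h)$ faithful. The single identity $\|(\Lambda\otimes\Lambda)(\Delta(a)(1\otimes b))\|^2=\|\Lambda(a)\|^2\|\Lambda(b)\|^2$, obtained by applying $(h\otimes\mathrm{id})$ to $(1\otimes b^*)\Delta(a^*a)(1\otimes b)$ and using the left invariance $(h\otimes\mathrm{id})\Delta=h(\cdot)1$, shows that $\Lambda(a)\otimes\Lambda(b)\mapsto(\Lambda\otimes\Lambda)(\Delta(a)(1\otimes b))$ is a well–defined isometry on a dense subspace of $H_h\otimes H_h$; by density of $\Delta(A)(1\otimes A)$ it has dense range, hence extends to a unitary $W$. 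A direct computation on vectors $W(\Lambda(x)\otimes\Lambda(y))$ then yields $(\pi_h\otimes\pi_h)\Delta(a)=W(\pi_h(a)\otimes1)W^*$ for all $a\in A$. Therefore $a\in N_h\Rightarrow\pi_h(a)=0\Rightarrow(\widetilde\pi\otimes\widetilde\pi)\big((\pi_r\otimes\pi_r)\Delta(a)\big)=0$; since the minimal tensor product of faithful $*$-homomorphisms is faithful, $\widetilde\pi\otimes\widetilde\pi$ is injective, so $(\pi_r\otimes\pi_r)\Delta(a)=0$. Hence $\Delta_r$ is well defined by $\Delta_r\pi_r=(\pi_r\otimes\pi_r)\Delta$ — realized on $H_h\otimes H_h$ as $x\mapsto W(x\otimes1)W^*$ — and is a unital $*$-homomorphism $A_r\to A_r\otimes A_r$.

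From there everything follows by transporting identities through the surjections $\pi_r$ and $\pi_r^{\otimes 3}$: coassociativity of $\Delta_r$ from that of $\Delta$; the cancellation laws from $\Delta_r(A_r)(1\otimes A_r)=(\pi_r\otimes\pi_r)(\Delta(A)(1\otimes A))$ being dense in $(\pi_r\otimes\pi_r)(A\otimes A)=A_r\otimes A_r$ (and symmetrically on the other side). Thus $(A_r,\Delta_r)$ is a compact quantum group and, $\pi_r$ being a surjective $*$-homomorphism with $(\pi_r\otimes\pi_r)\Delta=\Delta_r\pi_r$, a compact quantum subgroup of $\mathcal{G}$. Pushing the bi–invariance of $h$ through $\pi_r$ shows $h_r$ is bi–invariant for $\Delta_r$, so by uniqueness of the Haar state $h_r$ is the Haar measure of $(A_r,\Delta_r)$, and we already saw it is faithful and $h=h_r\pi_r$.

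For the Hopf–algebra statements I would use that $h$ is faithful on $\mathscr{A}$: then $\pi_r$ is injective on $\mathscr{A}$ (if $a\in\mathscr{A}$ and $\pi_r(a)=0$ then $h(a^*a)=0$, so $a=0$), so $\pi_r(\mathscr{A})$ is a dense $*$-subalgebra of $A_r$ with $\Delta_r(\pi_r(\mathscr{A}))\subseteq\pi_r(\mathscr{A})\odot\pi_r(\mathscr{A})$, and one may define $\varepsilon_r,\kappa_r$ on $\pi_r(\mathscr{A})$ unambiguously by $\varepsilon_r\pi_r=\varepsilon$ and $\kappa_r\pi_r=\pi_r\kappa$ on $\mathscr{A}$; transporting the Hopf $*$-algebra axioms of $(\mathscr{A},\Delta|_{\mathscr{A}},\varepsilon,\kappa)$ through $\pi_r$ makes $(\pi_r(\mathscr{A}),\Delta_r|,\varepsilon_r,\kappa_r)$ a Hopf $*$-algebra dense in $A_r$, which by uniqueness of the dense Hopf $*$-subalgebra of a compact quantum group \cite[Theorem A.1]{BMT2001} must be the Hopf $*$-algebra of $(A_r,\Delta_r)$. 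The main obstacle is the descent of $\Delta$ in the second paragraph — specifically producing the conjugation formula $(\pi_h\otimes\pi_h)\Delta(a)=W(\pi_h(a)\otimes1)W^*$ and knowing $\widetilde\pi\otimes\widetilde\pi$ is injective on the minimal tensor product; once these are in hand, the remaining assertions are routine diagram chases through $\pi_r$.
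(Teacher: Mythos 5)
Your argument is correct, but note that the paper itself offers no proof of this statement: it is quoted verbatim as Theorem 2.1 of the cited reference \cite{BMT2001}, so there is no internal proof to compare against. What you have written is essentially the standard construction of the reduced compact quantum group via the GNS representation of the Haar state, which is also the route taken in the literature the paper relies on: identify $\ker\pi_h=N_h$, build the multiplicative unitary $W$ from left invariance and the density of $\Delta(A)(1\otimes A)$, use the conjugation formula $(\pi_h\otimes\pi_h)\Delta(a)=W(\pi_h(a)\otimes 1)W^*$ together with injectivity of $\widetilde\pi\otimes\widetilde\pi$ on the minimal tensor product to see that $\Delta$ descends, and then push coassociativity, the cancellation laws, bi-invariance of $h_r$, and the Hopf $*$-algebra structure through the surjections, invoking uniqueness of the Haar state and of the dense Hopf $*$-subalgebra (the latter being exactly \cite[Theorem A.1]{BMT2001}, which the paper also uses elsewhere). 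The only cosmetic point: the single norm identity $\|(\Lambda\otimes\Lambda)(\Delta(a)(1\otimes b))\|^2=\|\Lambda(a)\|^2\|\Lambda(b)\|^2$ on elementary tensors does not by itself give well-definedness of $W$ on the linear span; you should state the polarized version $\langle(\Lambda\otimes\Lambda)(\Delta(a)(1\otimes b)),(\Lambda\otimes\Lambda)(\Delta(c)(1\otimes d))\rangle=h(a^*c)\,h(b^*d)$, which follows from the very same computation (apply $(h\otimes\mathrm{id})$ to $(1\otimes b^*)\Delta(a^*c)(1\otimes d)$) and yields the isometry on finite sums. With that trivial adjustment the proof is complete.
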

\begin{definition}
The compact quantum group $(A_r,\Delta_r)$ is called the \textbf{reduced} compact quantum group of $\mathcal{G}$, and we write it as $\mathcal{G}_r$.
\end{definition}

From the theorem above, it is easy to check that any compact quantum group action of $\mathcal{G}$ on $B$ induces an action $\alpha_r$ of $(A_r,\Delta_r)$ on $B$ defined by
$$\alpha_r=(id\otimes\pi_r)\alpha.$$

Let $\alpha$ be an action of compact quantum group $\mathcal{G}$ on a unital C*-algebra $B$.
Let $B^{\alpha}=\{b\in B|\alpha(b)=b\otimes 1_A\}$. It is known that $$B^{\alpha}=(id\otimes h)\alpha(B). $$

Next we show that the space of invariant linear bounded functionals on $B$ is isometrically isomorphic to the dual space of $B^\alpha$.

Let $(B^{\alpha})'$ be the dual space of $B^{\alpha}$ and ${\rm Inv}(B)$ be the space of  $\alpha$-invariant bounded linear functionals on $B$. Define $T: {\rm Inv}(B)\rightarrow (B^\alpha)'$ as $T(\psi)=\psi|_{B^{\alpha}}$. Then

\begin{theorem}\label{l:T bijective}
The linear map $T$ is a bijective isometry.
\end{theorem}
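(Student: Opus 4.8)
The plan is to construct an explicit inverse to $T$ using the conditional-expectation-type map $E = (id\otimes h)\alpha : B \to B^{\alpha}$, and then check that $T$ is isometric. Recall from the excerpt that $B^{\alpha} = (id\otimes h)\alpha(B)$, so $E$ is a well-defined surjection onto $B^{\alpha}$; moreover $E$ restricts to the identity on $B^{\alpha}$, since for $b\in B^{\alpha}$ we have $E(b) = (id\otimes h)(b\otimes 1) = h(1)b = b$. Also $E$ is unital and positive (being the composition of the positive map $\alpha$ with the slice $id\otimes h$), hence $\|E\|=1$. Given $\varphi \in (B^{\alpha})'$, I would set $S(\varphi) = \varphi\circ E \in B'$.

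First I would verify that $S(\varphi)$ is $\alpha$-invariant, i.e. $(S(\varphi)\otimes id)\alpha(b) = S(\varphi)(b)\,1_A$ for all $b\in B$. Writing this out, $(S(\varphi)\otimes id)\alpha(b) = (\varphi E\otimes id)\alpha(b) = (\varphi\otimes id)\big((id\otimes h\otimes id)(\alpha\otimes id)\alpha(b)\big)$; applying the action axiom $(\alpha\otimes id)\alpha = (id\otimes\Delta)\alpha$ this becomes $(\varphi\otimes id)\big((id\otimes h\otimes id)(id\otimes\Delta)\alpha(b)\big) = (\varphi\otimes id)\big((id\otimes (h\otimes id)\Delta)\alpha(b)\big)$, and by the invariance property of the Haar state $(h\otimes id)\Delta(a) = h(a)1_A$ we get $(\varphi\otimes id)\big((id\otimes h)\alpha(b)\otimes 1_A\big) = \varphi(E(b))\,1_A = S(\varphi)(b)\,1_A$, as desired. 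So $S$ maps $(B^{\alpha})'$ into ${\rm Inv}(B)$. Next, $T S(\varphi) = (\varphi\circ E)|_{B^{\alpha}} = \varphi$ because $E|_{B^{\alpha}} = id$; and conversely, for $\psi\in{\rm Inv}(B)$ I claim $S T(\psi) = \psi$, i.e. $\psi(E(b)) = \psi(b)$ for all $b$. Indeed $\psi(E(b)) = \psi\big((id\otimes h)\alpha(b)\big) = (h\otimes \psi)(\sigma\alpha(b))$ where $\sigma$ is the flip — more cleanly, $\psi\big((id\otimes h)\alpha(b)\big) = (\psi\otimes h)\alpha(b) = h\big((\psi\otimes id)\alpha(b)\big) = h(\psi(b)1_A) = \psi(b)$, using invariance of $\psi$ in the penultimate step. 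Hence $S = T^{-1}$ and $T$ is a linear bijection.

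It remains to see that $T$ is isometric. One inequality is immediate: $\|T\psi\| = \|\psi|_{B^{\alpha}}\| \le \|\psi\|$. For the reverse, $\|\psi\| = \|S T\psi\| = \|(T\psi)\circ E\| \le \|T\psi\|\,\|E\| = \|T\psi\|$ since $\|E\| = 1$. Combining, $\|T\psi\| = \|\psi\|$, so $T$ is a bijective isometry.

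The only genuinely substantive point — the rest being bookkeeping with the defining identities — is the verification that $E = (id\otimes h)\alpha$ is a norm-one projection of $B$ onto $B^{\alpha}$, and in particular that $\psi \circ E = \psi$ for invariant $\psi$ and $E|_{B^\alpha}=\mathrm{id}$; these follow from the action axiom together with the bi-invariance of the Haar state exactly as above, and I would expect the referee-level subtlety to be nothing more than making sure the slice maps $id\otimes h$ and the leg-numbering in $(\alpha\otimes id)\alpha = (id\otimes\Delta)\alpha$ are applied in the correct order. No completeness or approximation argument is needed, since $E$ is already defined on all of $B$ and everything is continuous.
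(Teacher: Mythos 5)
Your proposal is correct and follows essentially the same route as the paper: the inverse is the same map $\varphi\mapsto\varphi\circ(id\otimes h)\alpha$, verified via the action axiom and the invariance of the Haar state, with the isometry obtained from the contractivity of $(id\otimes h)\alpha$ (which the paper uses in the pointwise form $\|(id\otimes h)\alpha(b)\|\leq 1$ rather than quoting $\|E\|=1$). No gaps.
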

\begin{proof}
Obviously $\|T\|\leq 1$, so $T$ is bounded.
Define the map $S: (B^\alpha)'\to  B'$ by $S(\varphi)=\widetilde{\varphi}$ for every $\varphi$ in $(B^{\alpha})'$
where $\widetilde{\varphi}$ is the linear functional on $B$ defined by $$\widetilde{\varphi}(b)=\varphi((id\otimes h)\alpha(b))$$
for every $b\in B$. Next we show that $S$ is the inverse of $T$.

First we show that $\widetilde{\varphi}$ is $\alpha$-invariant.
From $(\alpha\otimes id)\alpha=(id\otimes\Delta)\alpha$ and $(h\otimes id)\Delta=h(\cdot)1_A$, we have
\begin{align*}
 (\widetilde{\varphi}\otimes id)\alpha&=((\varphi\otimes h)\alpha\otimes id)\alpha=(\varphi\otimes h\otimes id)(\alpha\otimes id)\alpha \\
 &=(\varphi\otimes h\otimes id)(id\otimes\Delta)\alpha=(\varphi\otimes((h\otimes id)\Delta))\alpha\\
 &=(\varphi\otimes(h(\cdot)1_A))\alpha=\varphi((id\otimes h)\alpha(\cdot))1_A=\widetilde{\varphi}(\cdot)1_A.
\end{align*}
Hence $S$ maps $(B^\alpha)'$ into ${\rm Inv}(B)$. Moreover $\alpha(b)=b\otimes 1_A$  for any $b \in B^{\alpha}$. Hence
 $\widetilde{\varphi}(b)=\varphi(b)$ for any $b \in B^{\alpha}$. So  $\varphi$ is the restriction of $\widetilde{\varphi}$ on $B^{\alpha}$.
Therefore $\widetilde{\varphi}$ is $\alpha$-invariant and $TS(\varphi)=T(\widetilde{\varphi})=\varphi$. This shows the surjectivity of $T$.

Secondly for all $\phi\in {\rm Inv}(B)$ and all $b\in B$, we have $(\phi\otimes id)\alpha(b)=\phi(b)1_A$. Applying $h$ on both sides of the above equation, we get $(\phi\otimes h)\alpha(b)=\phi(b)$.  So
$$\widetilde{T(\phi)}(b)=T(\phi)((id\otimes h)\alpha(b))=\phi((id\otimes h)\alpha(b))=(\phi\otimes h)\alpha(b)=\phi(b)$$ for all $b\in B$. That is to say that $ST(\phi)=\widetilde{T(\phi)}=\phi$ for all  $\phi\in {\rm Inv}(B)$. Therefore $S$ is the inverse of $T$ and $T$ is bijective.

Moreover for every $\phi\in {\rm Inv}(B)$, we see that $\|T(\phi)\|\leq \|\phi\|$ and $\phi(b)=(\phi\otimes h)\alpha(b)=\phi((id\otimes h)\alpha(b))$ for each $b\in B$. If $b\in B$ and $\|b\|\leq1$, then $(id\otimes h)\alpha(b)\in B^\alpha$ and $\|(id\otimes h)\alpha(b)\|\leq 1$. So
\begin{align*}
\|\phi\|&=\sup_{\|b\|\leq1}|\phi(b)|=\sup_{\|b\|\leq1}|\phi((id\otimes h)\alpha(b))|    \\
       &=\sup_{\|b\|\leq1}|T(\phi)((id\otimes h)\alpha(b))|\leq \|T(\phi)\|.
\end{align*}
Therefore $\|T(\phi)\|=\|\phi\|$ for every $\phi\in {\rm Inv}(B)$ and $T$ is an isometry from ${\rm Inv}(B)$ onto $(B^\alpha)'$.
\end{proof}

The following theorem follows from Theorem~\ref{l:T bijective} immediately.

\begin{theorem}\label{ergodic and unique invariant measure}
A compact quantum group action $\alpha$ of $\A$ on $B$ is ergodic if and only if there is a unique $\alpha$-invariant state on $B$.
\end{theorem}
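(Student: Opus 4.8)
The plan is to deduce the statement from Theorem~\ref{l:T bijective}, i.e.\ from the bijective isometry $T\colon {\rm Inv}(B)\to (B^\alpha)'$ and its inverse $S$, by checking that $T$ and $S$ match up \emph{states} on the two sides. First I would record that $B^\alpha$ is a unital $C^*$-subalgebra of $B$: it contains $1_B$, and since $\alpha$ is a $*$-homomorphism it is closed under products and adjoints, and it is norm-closed because $\alpha$ is continuous.

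Next I would show that $T$ and $S$ restrict to mutually inverse bijections between the state space $S(B^\alpha)$ and the set ${\rm Inv}_\alpha$ of $\alpha$-invariant states on $B$. If $\psi\in{\rm Inv}_\alpha$, then $T(\psi)=\psi|_{B^\alpha}$ is the restriction of a state to a unital $C^*$-subalgebra, hence a state on $B^\alpha$. Conversely, if $\varphi\in S(B^\alpha)$, then $S(\varphi)=\widetilde\varphi=\varphi\circ(id\otimes h)\alpha$; here $(id\otimes h)\alpha$ is a composition of the positive unital maps $\alpha$ and $id\otimes h$, so it is positive and sends $1_B$ to $1_B$, whence $\widetilde\varphi$ is positive with $\widetilde\varphi(1_B)=\varphi(1_B)=1$, i.e.\ a state; it is $\alpha$-invariant by Theorem~\ref{l:T bijective}. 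Thus $T$ and $S$ exhibit a bijection ${\rm Inv}_\alpha\cong S(B^\alpha)$; in particular ${\rm Inv}_\alpha\neq\emptyset$ since $S(B^\alpha)\neq\emptyset$, so "unique invariant state" genuinely means exactly one.

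Finally I would invoke the elementary fact that a unital $C^*$-algebra has exactly one state precisely when it equals $\mathbb{C}1$: if it is strictly larger, it contains a self-adjoint element outside $\mathbb{C}1$, whose spectrum then has at least two points, and this produces two distinct states. Combining the three observations, $\alpha$ is ergodic $\iff B^\alpha=\mathbb{C}1_B \iff S(B^\alpha)$ is a singleton $\iff {\rm Inv}_\alpha$ is a singleton, which is exactly the assertion.

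I do not expect a real obstacle here; the only points that need care are the positivity-and-unitality check that makes $S$ send states to states, and the standard $C^*$-algebra fact about algebras with a unique state — everything else is bookkeeping on top of Theorem~\ref{l:T bijective}. A quicker but slightly less symmetric route for the forward direction is: ergodicity gives $(B^\alpha)'\cong\mathbb{C}$, so ${\rm Inv}(B)$ is one-dimensional via $T$; since it contains at least one state and two distinct states cannot be proportional (evaluate at $1_B$), the invariant state is unique.
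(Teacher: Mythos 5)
Your proof is correct and takes essentially the same route as the paper: both rest on the bijective isometry $T$ of Theorem~\ref{l:T bijective} together with the standard fact that a unital $C^*$-algebra with exactly one state must be $\mathbb{C}1$. The only differences are cosmetic — you make explicit that $T$ and $S$ match states with states (a point the paper uses implicitly), you obtain the easy ``only if'' direction from the same bijection instead of citing Boca's lemma, and you close with the spectral two-state argument where the paper instead notes that $(B^\alpha)'=\mathbb{C}$ forces $B^\alpha\subseteq(B^\alpha)''=\mathbb{C}$.
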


\begin{proof}
The ``only if'' part is well-known~\cite[Lemma 4]{BOCA1995}, and we just prove the ``if'' part.

Assume that there is a unique $\alpha$-invariant state on $B$. By Theorem~\ref{l:T bijective}, we have that ${\rm Inv}(B)\cong (B^\alpha)'$. So there is a unique state on $(B^\alpha)'$. Every bounded linear functional on $B^\alpha$ is a linear combination of states on $B^\alpha$, so $(B^\alpha)'=\Complex$. Hence $B^\alpha\subseteq (B^\alpha)''=\Complex$. Therefore $B^\alpha=\Complex$ and $\alpha$ is ergodic.
\end{proof}

For a compact quantum group action $\alpha$ of $\mathcal{G}$ on $B$, recall that the reduced action $\alpha_r$ of $\Ar$ on $B$ is defined by
 $$\alpha_r=(id\otimes \pi_r)\alpha.$$
 A state $\mu$ on $B$ is $\alpha$-invariant if and only if $\mu$ is $\alpha_r$-invariant since $(\mu\otimes h)\alpha=(\mu\otimes h_r)\alpha_r$. So by Theorem~\ref{ergodic and unique invariant measure}, the following is true.
\begin{corollary}\label{ergodic iff reduced ergodic}
A compact quantum group action $\alpha$ of $\A$ on $B$ is ergodic if and only if the reduced action $\alpha_r$ of $\Ar$ on $B$ is ergodic.
\end{corollary}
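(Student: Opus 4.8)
The plan is to reduce the statement to Theorem~\ref{ergodic and unique invariant measure}, which says that an action is ergodic exactly when it has a unique invariant state. So it suffices to show that $\alpha$ and $\alpha_r$ have precisely the same invariant states on $B$; then one has a unique invariant state if and only if the other does, and the corollary follows at once.

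First I would record the identity $(\mu\otimes h)\alpha=(\mu\otimes h_r)\alpha_r$ for every bounded linear functional $\mu$ on $B$. This follows immediately from $\alpha_r=(id\otimes\pi_r)\alpha$ together with the relation $h=h_r\pi_r$ supplied by Theorem~2.1 of~\cite{BMT2001}, since $(\mu\otimes h_r)\alpha_r=(\mu\otimes h_r)(id\otimes\pi_r)\alpha=(\mu\otimes(h_r\pi_r))\alpha=(\mu\otimes h)\alpha$. Combining this with the descriptions $Inv_{\alpha}=\{(\psi\otimes h)\alpha\mid\psi\in S(B)\}$ and $Inv_{\alpha_r}=\{(\psi\otimes h_r)\alpha_r\mid\psi\in S(B)\}$ recalled in Section~2, the two sets on the right coincide term by term, so $Inv_{\alpha}=Inv_{\alpha_r}$. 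Alternatively one can argue directly that a state $\mu$ is $\alpha$-invariant if and only if it is $\alpha_r$-invariant: applying $\pi_r$ to $(\mu\otimes id)\alpha(b)=\mu(b)1_A$ gives the forward implication, and for the converse one writes $\mu=(\mu\otimes h_r)\alpha_r=(\mu\otimes h)\alpha$ and invokes the computation in the proof of Theorem~\ref{l:T bijective} showing that such a functional is $\alpha$-invariant.

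Finally, applying Theorem~\ref{ergodic and unique invariant measure} to $\alpha$ and to $\alpha_r$ separately yields the chain of equivalences: $\alpha$ is ergodic $\Leftrightarrow$ $Inv_{\alpha}$ is a singleton $\Leftrightarrow$ $Inv_{\alpha_r}$ is a singleton $\Leftrightarrow$ $\alpha_r$ is ergodic. I do not anticipate a genuine obstacle here; the only points needing care are that $\alpha_r$ is really a compact quantum group action of $\mathcal{G}_r$ on $B$ (noted just after the definition of the reduced compact quantum group, so that Theorem~\ref{ergodic and unique invariant measure} applies to it) and keeping straight whether $h$ or $h_r$ occurs in each invariance formula.
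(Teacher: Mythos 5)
Your proposal is correct and follows essentially the same route as the paper: the paper's argument is precisely the observation that $(\mu\otimes h)\alpha=(\mu\otimes h_r)\alpha_r$ (via $h=h_r\pi_r$ and $\alpha_r=(id\otimes\pi_r)\alpha$), so that $\alpha$-invariant and $\alpha_r$-invariant states coincide, followed by an appeal to Theorem~\ref{ergodic and unique invariant measure}. Your extra care in checking that $\alpha_r$ is genuinely an action of $\mathcal{G}_r$ and in spelling out both directions of the invariance equivalence is a fine, if slightly more detailed, rendering of the same proof.
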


\subsection{Invariant subsets}

From now on, an ideal $I$ of a unital $C^*$-algebra $B$  always means a closed two-sided ideal, and we denote the quotient map from $B$ onto $B/I$ by $\pi_I$.
\begin{definition}
Suppose a compact quantum group $\mathcal{G}$ acts on $B$ by $\alpha$. An ideal $I$ of $B$ is called \textbf{$\alpha$-invariant} if for all $b\in I$, $$(\pi_I\otimes id)\alpha(b)=0.$$ 
 A proper $I$ is called \textbf{maximal} if any proper $\alpha$-invariant ideal $J\supseteq I$ of $B$ satisfies that $I=J$.
\end{definition}

\begin{remark}
If an ideal $I$ of $B$ is $\alpha$-invariant, then $\alpha$ induces an action $\alpha_I$ of $\mathcal{G}$ on $B/I$ given by
$$\alpha_I(b+I)=(\pi\otimes id)\alpha(b)$$  for all $b\in B$.

If $B=C(X)$ for a compact Hausdorff space $X$, then there is a one-one correspondence between closed subsets of $X$ and ideals of $B$. To say that an ideal is invariant under a compact group action is equivalent to say that the corresponding closed subset of $X$ is invariant. An ideal is maximal just says that the corresponding closed subset  is a minimal invariant subset of $X$.
\end{remark}

\begin{proposition}\label{invariance of invariant subset under reduced action}
If $I$ is an $\alpha$-invariant ideal of $B$, then $I$ is also $\alpha_r$-invariant.
\end{proposition}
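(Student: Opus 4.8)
The plan is that $\alpha_r$-invariance of $I$ should follow from $\alpha$-invariance by simply pushing the $A$-leg forward along the quotient map $\pi_r\colon A\to A_r$. First I would fix $b\in I$; by hypothesis $(\pi_I\otimes id)\alpha(b)=0$ in $(B/I)\otimes A$. Since $\pi_r$ is a $*$-homomorphism and the minimal tensor product is functorial for $*$-homomorphisms, $id_{B/I}\otimes\pi_r$ is a well-defined $*$-homomorphism from $(B/I)\otimes A$ to $(B/I)\otimes A_r$; applying it to the displayed equation gives $(\pi_I\otimes\pi_r)\alpha(b)=0$.

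Next I would reassociate this composition the other way, using the definition $\alpha_r=(id\otimes\pi_r)\alpha$ recalled earlier: $(\pi_I\otimes\pi_r)\alpha(b)=(\pi_I\otimes id_{A_r})\bigl((id_B\otimes\pi_r)\alpha(b)\bigr)=(\pi_I\otimes id)\alpha_r(b)$. Hence $(\pi_I\otimes id)\alpha_r(b)=0$ for every $b\in I$, which is precisely the assertion that $I$ is $\alpha_r$-invariant.

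This is essentially a diagram chase, so I do not expect a genuine obstacle; the only point requiring a word of care is that $id_{B/I}\otimes\pi_r$ is indeed defined on the minimal tensor product, which is standard since $\pi_r$ is a $*$-homomorphism of $C^*$-algebras (the same reasoning that already justifies the definition of $\alpha_r$).
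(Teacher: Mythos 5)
Your proof is correct and is essentially the paper's own argument: both apply $\pi_r$ on the $A$-leg of $(\pi_I\otimes id)\alpha(b)=0$ and use the commutation $(\pi_I\otimes\pi_r)\alpha=(\pi_I\otimes id)(id\otimes\pi_r)\alpha=(\pi_I\otimes id)\alpha_r$ on the minimal tensor product. Your extra remark about $id_{B/I}\otimes\pi_r$ being well defined just makes explicit a point the paper leaves implicit.
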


\begin{proof}
Since $I$ is $\alpha$-invariant, we have that $(\pi_I\otimes id)\alpha(b)=0$ for all $b\in I$. Note that $\alpha_r=(id\otimes \pi_r)\alpha$. It follows that $(\pi_I\otimes id)\alpha_r(b)=(id\otimes\pi_r)(\pi_I\otimes id)\alpha(b)=0$ for all $b\in I$. So $I$ is also $\alpha_r$-invariant.
\end{proof}

If $I$ is an $\alpha$-invariant ideal of $B$, then for every $\alpha_I$-invariant state $\mu$ on $B/I$, the state $\mu \pi_I$ is an $\alpha$-invariant state on $B$. By Theorem~\ref{ergodic and unique invariant measure}, we have the following.

\begin{proposition}\label{inv subset is ergodic}
Consider a compact quantum group action $\alpha$ on $B$. If $I$ is an $\alpha$-invariant ideal of $B$ and $\alpha$ is ergodic, then $\alpha_I$ is also ergodic.
\end{proposition}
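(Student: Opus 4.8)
The plan is to reduce the statement to the characterization of ergodicity by uniqueness of invariant states, i.e. Theorem~\ref{ergodic and unique invariant measure}. Since $B$ is unital and $I$ is a closed two-sided ideal, the quotient $B/I$ is again a unital $C^*$-algebra and $\alpha_I$ is a genuine action of $\mathcal{G}$ on it, so $Inv_{\alpha_I}$ is nonempty. Thus it suffices to show that $B/I$ admits at most one $\alpha_I$-invariant state.

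The key point, already signalled in the paragraph preceding the statement, is that composition with $\pi_I$ carries $\alpha_I$-invariant states to $\alpha$-invariant states. Concretely, first I would check that for any state $\mu$ on $B/I$ the functional $\mu\pi_I$ is a state on $B$ (positive as a composite of positive maps, and unital since $\pi_I(1_B)=1_{B/I}$). Then, using $\alpha_I(\pi_I(b))=(\pi_I\otimes id)\alpha(b)$,
$$((\mu\pi_I)\otimes id)\alpha(b)=(\mu\otimes id)(\pi_I\otimes id)\alpha(b)=(\mu\otimes id)\alpha_I(\pi_I(b)),$$
so if $\mu$ is $\alpha_I$-invariant this equals $\mu(\pi_I(b))1_A=(\mu\pi_I)(b)1_A$; hence $\mu\pi_I$ is $\alpha$-invariant.

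With this in hand the proof closes at once: given two $\alpha_I$-invariant states $\mu_1,\mu_2$ on $B/I$, the states $\mu_1\pi_I$ and $\mu_2\pi_I$ are $\alpha$-invariant on $B$, so ergodicity of $\alpha$ together with Theorem~\ref{ergodic and unique invariant measure} forces $\mu_1\pi_I=\mu_2\pi_I$, and surjectivity of $\pi_I$ gives $\mu_1=\mu_2$. Therefore $\alpha_I$ has a unique invariant state, and applying Theorem~\ref{ergodic and unique invariant measure} in the other direction to $\alpha_I$ shows that $\alpha_I$ is ergodic. I do not expect a genuine obstacle here — the proposition is essentially a corollary of Theorem~\ref{ergodic and unique invariant measure} — the only steps needing a moment of care being the verification that $\mu\pi_I$ is positive, unital and $\alpha$-invariant.
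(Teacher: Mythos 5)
Your proof is correct and follows essentially the same route as the paper: the paper likewise notes that composition with $\pi_I$ sends $\alpha_I$-invariant states to $\alpha$-invariant states and then invokes Theorem~\ref{ergodic and unique invariant measure} in both directions. The only extra care you add (positivity, unitality, and the explicit invariance computation for $\mu\pi_I$) is harmless and matches what the paper leaves implicit.
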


Take  an $\alpha$-invariant state $\mu$ on $B$. Let $\Phi_{\mu}:B\to B(H_{\mu})$ be the GNS representation of $B$ with respect to $\mu$ and denote $\ker{\Phi_{\mu}}$ by $I_{\mu}$. If $B$ is commutative, then
$$I_{\mu}=N_{\mu}=\{f\in B|\mu(f^*f)=0\}=\{f\in B|\,\,f|_{support\,\, of\, \mu}=0\}.$$
For a compact group action on a commutative $C^*$-algebra $B=C(X)$, the ideal $I_\mu$ is invariant is equivalent to that the support of $\mu$ is an invariant subset of $X$. The following theorem says that this is also true in the quantum case.

\begin{theorem}\label{invariant support}
Suppose that $\mathcal{G}$ acts on $B$ by $\alpha$ and $\mu$ is an $\alpha$-invariant state on $B$.  The ideal $I_{\mu}$ of $B$ is  $\alpha$-invariant, and the induced action on $B/I_{\mu}$, denoted by $\alpha_{\mu}$, is injective.
\end{theorem}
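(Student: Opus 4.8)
The plan is to work with the GNS data of the invariant state $\mu$ and use the defining property of invariance, namely $(\mu\otimes id)\alpha(b)=\mu(b)1_A$, to transport the null space $N_\mu$ forward under $\alpha$. The key observation is that although $I_\mu=\ker\Phi_\mu$ need not equal $N_\mu$ in general (for noncommutative $B$), one has the standard identification $I_\mu=\{b\in B\mid \mu(c^*b^*bc)=0 \text{ for all }c\in B\}$, i.e. $I_\mu$ is the largest two-sided ideal contained in the left kernel $N_\mu$. So the first step is to show $(\pi_{N_\mu}\otimes id)\alpha(b)$ behaves well, and then upgrade from $N_\mu$ to $I_\mu$. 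Concretely, for $b\in N_\mu$ I would compute, using that $\mu$ is a state (hence completely positive) and invariant: for any state $\omega$ on $A$, the functional $\mu\otimes\omega$ applied to $\alpha(b)^*\alpha(b)=\alpha(b^*b)$ gives $(\mu\otimes\omega)\alpha(b^*b)=\omega\big((\mu\otimes id)\alpha(b^*b)\big)=\omega(\mu(b^*b)1_A)=0$. Since this holds for every state $\omega$ on $A$, and $\alpha(b^*b)\ge 0$ in $B\otimes A$, it follows that $(\mu\otimes id)\alpha(b^*b)=0$ forces $\alpha(b)\in N_{\mu\otimes\omega}$ for all $\omega$; more usefully, slicing shows $(\Phi_\mu\otimes id)\alpha(b)=0$, i.e. $\alpha(b)\in \ker(\Phi_\mu\otimes id)=\ker\Phi_\mu\otimes A=I_\mu\otimes A$ (using that $\Phi_\mu\otimes id$ on $B\otimes A$ has kernel $I_\mu\otimes A$ by exactness of the minimal tensor product).

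The second step handles a general $b\in I_\mu$ rather than $b\in N_\mu$. Since $I_\mu$ is a two-sided ideal and $I_\mu\subseteq N_\mu$, the computation above already applies verbatim to any $b\in I_\mu$: we get $(\Phi_\mu\otimes id)\alpha(b)=0$. But $\ker(\Phi_\mu\otimes id)=I_\mu\otimes A$, hence $\alpha(b)\in I_\mu\otimes A$. In particular $(\pi_{I_\mu}\otimes id)\alpha(b)=0$, which is exactly the statement that $I_\mu$ is $\alpha$-invariant. (Here $\pi_{I_\mu}\colon B\to B/I_\mu$ is the quotient map, and I am using $\ker(\pi_{I_\mu}\otimes id)=I_\mu\otimes A$, again by exactness; equivalently $\pi_{I_\mu}\otimes id$ factors through $\Phi_\mu\otimes id$ since $\Phi_\mu$ is faithful on $B/I_\mu$.)

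The third step is injectivity of the induced action $\alpha_\mu$ on $B/I_\mu$. Recall $\alpha_\mu(b+I_\mu)=(\pi_{I_\mu}\otimes id)\alpha(b)$. Suppose $\alpha_\mu(b+I_\mu)=0$, i.e. $\alpha(b)\in I_\mu\otimes A$. Apply the counit-type retraction: $\alpha$ restricted to the Podlès algebra $\mathscr B$ satisfies $(id\otimes\varepsilon)\alpha|_{\mathscr B}=id_{\mathscr B}$. The cleanest route is to first reduce to the case where the Haar measure is faithful — by Corollary~\ref{ergodic iff reduced ergodic} and Proposition~\ref{invariance of invariant subset under reduced action} one may replace $\mathcal G$ by $\mathcal G_r$ and $\alpha$ by $\alpha_r$, noting $I_\mu$ is unchanged since $\mu$ is simultaneously $\alpha$- and $\alpha_r$-invariant and the GNS kernel only sees $\mu$. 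Then for $b$ in the Podlès algebra $\mathscr B$ of $B$ (with respect to $\alpha_r$), $b=(id\otimes\varepsilon_r)\alpha_r(b)$; if $\alpha_r(b)\in I_\mu\otimes A_r$ then applying $id\otimes\varepsilon_r$ gives $b\in I_\mu$, provided $\varepsilon_r$ makes sense on the relevant elements — and indeed $\alpha_r(\mathscr B)\subseteq \mathscr B\odot \pi_r(\mathscr A)$ where the counit is everywhere defined. Since $\mathscr B\cap I_\mu$ is dense in $I_\mu$ and $\mathscr B$ is dense in $B$, a density/continuity argument then shows $b\in I_\mu$ for all $b$ with $\alpha(b)\in I_\mu\otimes A$, giving injectivity of $\alpha_\mu$.

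The main obstacle is the bookkeeping in the third step: the identity $(id\otimes\varepsilon)\alpha=id$ only holds on the Podlès algebra $\mathscr B$ and the counit is only densely defined, so one cannot simply apply $id\otimes\varepsilon$ to an arbitrary element of $I_\mu\otimes A$. The resolution is to intersect with the Podlès algebra and argue that $\alpha_\mu$, being a $*$-homomorphism between $C^*$-algebras, has closed kernel, so it suffices to check triviality of the kernel on the dense subalgebra $\pi_{I_\mu}(\mathscr B)$ of $B/I_\mu$ — and there the counit argument is legitimate. The first two steps, by contrast, are a routine slice-map/exactness argument once one recognizes that invariance kills $(\mu\otimes id)\alpha(b^*b)$ for $b\in I_\mu$.
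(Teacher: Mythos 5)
Your overall strategy (use invariance to kill things seen by the GNS representation of $\mu$) is in the right spirit, but both crucial steps have genuine gaps. First, the passage from $(\mu\otimes\omega)\alpha(b^*b)=0$ for all states $\omega$ to $(\Phi_\mu\otimes id)\alpha(b)=0$ does not follow by ``slicing'': to conclude that $(\Phi_\mu\otimes id)$ kills the positive element $\alpha(b^*b)$ you must test it against \emph{all} vector functionals $\mu(c^*\,\cdot\,c)\otimes\omega$, $c\in B$, whereas invariance only controls $(\mu\otimes id)\alpha$ itself. Indeed the claim as stated for $b\in N_\mu$ is false: for the trivial quantum group ($A=\Complex$, $\alpha(b)=b\otimes 1$) every state is invariant, and for $B=M_2(\Complex)$ with $\mu$ a vector state one has $N_\mu\neq 0=I_\mu$, yet $(\Phi_\mu\otimes id)\alpha(b)=\Phi_\mu(b)\neq 0$ for $b\in N_\mu\setminus\{0\}$. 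Your step 2 says the computation ``applies verbatim'' to $b\in I_\mu$, but the computation uses only data that is also present in this counterexample; to make it work you must genuinely use the ideal property ($bc\in N_\mu$ for all $c$) together with the Podl\'es density of $\alpha(B)(1\otimes A)$ to approximate $c\otimes 1$ — which is essentially reconstructing the paper's route: the unitary $U(b\otimes a)=\alpha(b)(1\otimes a)$ on the Hilbert module $H_\mu\otimes A$ (well defined and isometric precisely because $\mu$ is invariant), and the injective $*$-homomorphism $\beta(T)=U(T\otimes 1)U^*$ with $\beta\Phi_\mu=(\Phi_\mu\otimes id)\alpha$. A side issue: $\ker(\Phi_\mu\otimes id)=I_\mu\otimes A$ ``by exactness of the minimal tensor product'' is not valid, since the minimal tensor product is not exact in general; fortunately the only fact you need there — injectivity of $\widehat{\Phi_\mu}\otimes id$ — is true and you do mention it.

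Second, the injectivity step does not close. From your reduction you only obtain that $\ker\alpha_\mu$ meets the dense $*$-subalgebra $\pi_{I_\mu}(\mathscr{B})$ trivially, and the principle ``a $*$-homomorphism with closed kernel is injective once its kernel is trivial on a dense subalgebra'' is false: in $C[0,1]$ the ideal of functions vanishing on $[0,\tfrac12]$ is nonzero but meets the dense subalgebra of polynomials only in $0$. To rescue this you would need additional nontrivial facts (e.g.\ that $\ker\alpha_\mu$ is an invariant ideal whose intersection with the Podl\'es algebra of $B/I_\mu$ is dense in it, and that this Podl\'es algebra is $\pi_{I_\mu}(\mathscr{B})$), none of which you establish; also, your use of ``$\alpha(b)\in I_\mu\otimes A$'' as the hypothesis again leans on the unproved exactness identification. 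The paper avoids all of this: from $\alpha_\mu(b+I_\mu)=0$ one gets $(\Phi_\mu\otimes id)\alpha(b)=\beta\Phi_\mu(b)=0$, and injectivity of $\beta$ and of $\widehat{\Phi_\mu}$ immediately give $b\in I_\mu$. (Alternatively, a slice argument can be made to work here: by invariance $\mu(c^*b^*bc)=(\mu\otimes\omega)\bigl(\alpha(c)^*\alpha(b^*b)\alpha(c)\bigr)$, which vanishes once $(\Phi_\mu\otimes id)\alpha(b^*b)=0$; but that is not the argument you gave.)
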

To prove Theorem~\ref{invariant support}, we need the following lemma:

\begin{lemma}\label{lemma of invariant support}
There exists an injective $*$-homomorphism $\beta:B(H_{\mu})\to L(H_{\mu}\otimes A)$ such that
$$\beta\Phi_{\mu}=(\Phi_{\mu}\otimes id)\alpha,$$
where $H_{\mu}\otimes A$ is the right Hilbert $A$-module with the inner product $\left<.,.\right>$ given by $\left<b_1\otimes a_1,b_2\otimes a_2\right>=\mu(b_1^*b_2)a_1^*a_2$ for $a_i\in A$ and $b_i\in B$, and $L(H_{\mu}\otimes A)$ is the set of adjointable maps on $H_{\mu}\otimes A$
\end{lemma}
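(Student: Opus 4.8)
The goal is to define $\beta$ on $B(H_\mu)$ compatibly with the GNS map $\Phi_\mu$ and $\alpha$. The natural starting point is to work on the algebraic level. Recall $H_\mu$ is the completion of $B/N_\mu$ with inner product coming from $\mu$; write $[b]$ for the class of $b$. The map $\Phi_\mu(b)[c] = [bc]$. On the Hilbert module $H_\mu \otimes A$, I first want to define an operator $V$ that implements $\alpha$ on vectors: send $[b]\otimes a$ to $(\Phi_\mu\otimes id)(\alpha(b))(1\otimes a)$, thought of as an element of $H_\mu\otimes A$. Concretely, writing $\alpha(b) = \sum_i b_i\otimes a_i$ (a finite sum can be taken for $b$ in the Podlès algebra $\mathscr{B}$, or a limit in general), this is $\sum_i [b_i]\otimes a_i a$. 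The first key step is to check this is well defined and isometric: $\langle V([b]\otimes a), V([b']\otimes a')\rangle = (\mu\otimes \mathrm{id})(\alpha(b^*b'))\,a^*a'$, and $\alpha$-invariance of $\mu$ gives $(\mu\otimes\mathrm{id})\alpha(b^*b') = \mu(b^*b')1_A$, so this equals $\mu(b^*b')a^*a' = \langle [b]\otimes a, [b']\otimes a'\rangle$. Thus $V$ extends to an isometry on $H_\mu\otimes A$; surjectivity (hence unitarity) should follow from the density of $\alpha(B)(1\otimes A)$ in $B\otimes A$ together with the definition of the inner product, so $V$ is a unitary in $L(H_\mu\otimes A)$.

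Next I would \emph{define} $\beta(x) = V(x\otimes 1)V^*$ for $x\in B(H_\mu)$, where $x\otimes 1$ is the obvious adjointable operator on $H_\mu\otimes A$. This is manifestly a $*$-homomorphism $B(H_\mu)\to L(H_\mu\otimes A)$, and it is injective because conjugation by a unitary is injective (and $x\mapsto x\otimes 1$ is injective since $A$ is unital). It remains to verify the intertwining identity $\beta\Phi_\mu = (\Phi_\mu\otimes\mathrm{id})\alpha$. Unwinding, for $b\in B$ I must show $V(\Phi_\mu(b)\otimes 1)V^* = (\Phi_\mu\otimes\mathrm{id})(\alpha(b))$ as operators on $H_\mu\otimes A$, equivalently $V(\Phi_\mu(b)\otimes 1) = (\Phi_\mu\otimes\mathrm{id})(\alpha(b))\,V$. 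Testing both sides on a generating vector $[c]\otimes a$: the left side gives $V([bc]\otimes a) = (\Phi_\mu\otimes\mathrm{id})(\alpha(bc))(1\otimes a)$, while the right side gives $(\Phi_\mu\otimes\mathrm{id})(\alpha(b))\cdot (\Phi_\mu\otimes\mathrm{id})(\alpha(c))(1\otimes a) = (\Phi_\mu\otimes\mathrm{id})(\alpha(b)\alpha(c))(1\otimes a) = (\Phi_\mu\otimes\mathrm{id})(\alpha(bc))(1\otimes a)$ since $\alpha$ is multiplicative. So the two agree on a dense set, hence everywhere, giving the identity.

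\textbf{Main obstacle.} The genuinely delicate point is the well-definedness and boundedness (indeed adjointability/unitarity) of $V$ on the Hilbert $A$-module $H_\mu\otimes A$: one must be careful that $V$ descends to the quotient $B/N_\mu$ (i.e. kills $N_\mu\otimes A$), which again uses $\alpha$-invariance of $\mu$ via the computation that $b\in N_\mu$ forces $(\mu\otimes\mathrm{id})\alpha(b^*b) = \mu(b^*b)1_A = 0$; and that $V$ extends continuously, for which the isometry computation above suffices, and that $V$ is surjective — here the cleanest route is to exhibit a two-sided inverse directly, namely the map built the same way from the ``adjoint'' expression, using coassociativity $(\alpha\otimes\mathrm{id})\alpha=(\mathrm{id}\otimes\Delta)\alpha$ and the density of $\alpha(B)(1\otimes A)$ to see that the image is dense. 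One also needs the easy observations that $\Phi_\mu(b)\otimes 1$ and the images of $\alpha(b)$ under $\Phi_\mu\otimes\mathrm{id}$ genuinely lie in $L(H_\mu\otimes A)$, which is routine since $A$ is unital and these are restrictions of bounded operators tensored with bounded operators. Once $V$ is in hand as a unitary, the rest is formal.
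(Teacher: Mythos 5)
Your proposal is correct and takes essentially the same route as the paper: the paper defines the same map $U(b\otimes a)=\alpha(b)(1\otimes a)$ on the Hilbert module $H_{\mu}\otimes A$, cites Boca's Lemma~5 argument for its unitarity (precisely the invariance/isometry and density computation you spell out), and then sets $\beta(T)=U(T\otimes 1)U^{*}$. The only cosmetic difference is that the paper verifies $\beta\Phi_{\mu}=(\Phi_{\mu}\otimes id)\alpha$ on the dense set $\alpha(B)(1\otimes A)$, whereas you test the equivalent identity $V(\Phi_{\mu}(b)\otimes 1)=(\Phi_{\mu}\otimes id)(\alpha(b))V$ on elementary tensors; both are fine.
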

\begin{proof}
We can define a bounded linear map $U: H_{\mu}\otimes A\to H_{\mu}\otimes A$ by
$$U(b\otimes a)=\alpha(b)(1\otimes a),$$ for all $b\in B$ and $a\in A$.

Using the argument in~\cite[Lemma 5]{Boca1995}, we get that $U$ is a unitary representation of $\mathcal{G}$ with the carrier Hilbert space $H_{\mu}$.

Let $\beta(T)=U(T\otimes 1)U^*$ for $T\in B(H_{\mu})$. It is easy to see that $\beta$ is an injective $*$-homomorphism from  $B(H_{\mu})$ into $L(H_{\mu}\otimes A)$. To prove $\beta\Phi_{\mu}=(\Phi_{\mu}\otimes id)\alpha$, it is enough to show that
$$\beta\Phi_{\mu}(b)(\alpha(b_1)(1\otimes a_1))=(\Phi_{\mu}\otimes id)\alpha(b)(\alpha(b_1)(1\otimes a_1))$$ for all $a_1\in A$ and $b,b_1\in B$, since $\alpha(B)(1\otimes A)$ is dense in $B\otimes A$.
From the definitions of $U$ and $\beta$ and that $U$ is unitary,
\begin{equation*}
\begin{split}
\beta\Phi_{\mu}(b)(\alpha(b_1)(1\otimes a_1))
&=u(b\otimes 1)u^*(\alpha(b_1)(1\otimes a_1))  \\
&=u(bb_1\otimes a_1)=\alpha(bb_1)(1\otimes a_1).   \notag
\end{split}
\end{equation*}
On the other hand, we have that
$$(\Phi_{\mu}\otimes id)\alpha(b)(\alpha(b_1)(1\otimes a_1))=\alpha(bb_1)(1\otimes a_1).$$

This completes the proof.
\end{proof}
Now we are ready to prove Theorem~\ref{invariant support}.
\begin{proof}
By Lemma~\ref{lemma of invariant support}, we have that $\beta\Phi_{\mu}=(\Phi_{\mu}\otimes id)\alpha$. Hence $(\Phi_{\mu}\otimes id)\alpha(b)=\beta\Phi_{\mu}(b)=0$ for any $b\in I_{\mu}$. Let $\pi_{\mu}$ be the quotient map from $B$ onto $B/I_{\mu}$ and $\widehat{\Phi_{\mu}}$ be the injective $*$-homomorphism from $B/I_{\mu}$ into $B(H_{\mu})$ induced by $\Phi_{\mu}$, then
$$\Phi_{\mu}=\widehat{\Phi_{\mu}}\pi_{\mu}.$$ The injectivity of $\widehat{\Phi_{\mu}}$ gives us the injectivity of $\widehat{\Phi_{\mu}}\otimes id$. So for $b\in I_{\mu}$, the identities $$0=\beta\Phi_{\mu}(b)=(\Phi_{\mu}\otimes id)\alpha(b)=(\widehat{\Phi_{\mu}}\otimes id)(\pi_{\mu}\otimes id)\alpha(b)$$  implies that $(\pi_{\mu}\otimes id)\alpha(b)=0$, which proves the invariance of $I_{\mu}$.

If $\alpha_{\mu}(b+I_{\mu})=0$ for some $b\in B$, then $(\pi_{\mu}\otimes id)\alpha(b)=0$. Hence $(\widehat{\Phi_{\mu}}\otimes id)(\pi_{\mu}\otimes id)\alpha(b)=0$. Then it follows from $\Phi_{\mu}=\widehat{\Phi_{\mu}}\pi_{\mu}$ that $(\Phi_{\mu}\otimes id)\alpha(b)=0$. Since $\beta\Phi_{\mu}=(\Phi_{\mu}\otimes id)\alpha$, we have that $\beta\Phi_{\mu}(b)=0$. That is to say $\beta\widehat{\Phi_{\mu}}\pi_{\mu}(b)=0$. Since $\beta$ and $\widehat{\Phi_{\mu}}$ are both injective, we have that $\pi_{\mu}(b)=0$, which proves the injectivity of $\alpha_{\mu}$.
\end{proof}

Theorem~\ref{invariant support} shows that there always exist invariant subsets such that the induced action is injective. Furthermore, there exists the ``largest'' invariant subsets with injective action.

\begin{theorem}\label{largest inv subset admitting injective action}
Consider a compact quantum group action $\alpha$ on $B$. Let $\mathfrak{J}$  be the set of all proper~$\alpha$-invariant ideals of $B$ with injective induced actions. Denote the ideal $\bigcap_{I\in\mathfrak{J}}I$ by $I_{\mathfrak{J}}$. Then $I_{\mathfrak{J}}$ is also $\alpha$-invariant. The induced action on $B/\bigcap_{I\in\mathfrak{J}}I$ is injective and satisfies the following universal property:

for any proper invariant ideal $I$ with injective induced action on $B/I$, there is an equivariant map from $B/I_{\mathfrak{J}}$ onto $B/I$.
\end{theorem}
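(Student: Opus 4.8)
The plan is to build the quotient $B/I_{\mathfrak J}$ directly as the image of a GNS-type construction and verify the universal property by hand, imitating the proof of Theorem~\ref{invariant support}. First I would show $I_{\mathfrak J}$ is $\alpha$-invariant: fix $b\in I_{\mathfrak J}$; for each $I\in\mathfrak J$ we have $b\in I$, hence $(\pi_I\otimes id)\alpha(b)=0$. Since the natural map $B/I_{\mathfrak J}\hookrightarrow\prod_{I\in\mathfrak J}B/I$ (sending $b+I_{\mathfrak J}$ to $(b+I)_{I}$) is an injective $*$-homomorphism, and $(\pi_{I_{\mathfrak J}}\otimes id)\alpha(b)$ maps to $\big((\pi_I\otimes id)\alpha(b)\big)_I=0$ under the induced injection $(B/I_{\mathfrak J})\otimes A\hookrightarrow\big(\prod_I B/I\big)\otimes A$, we conclude $(\pi_{I_{\mathfrak J}}\otimes id)\alpha(b)=0$. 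Here one must be slightly careful that injectivity of a $*$-homomorphism $\theta$ gives injectivity of $\theta\otimes id$ on the minimal tensor product; this is standard and was already used implicitly in the proof of Theorem~\ref{invariant support}. So $I_{\mathfrak J}$ is $\alpha$-invariant and the induced action $\alpha_{I_{\mathfrak J}}$ on $B/I_{\mathfrak J}$ exists.

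Next I would prove $\alpha_{I_{\mathfrak J}}$ is injective. Suppose $\alpha_{I_{\mathfrak J}}(b+I_{\mathfrak J})=0$, i.e. $(\pi_{I_{\mathfrak J}}\otimes id)\alpha(b)=0$. Composing with the injection above, $(\pi_I\otimes id)\alpha(b)=0$ for every $I\in\mathfrak J$, which says $\alpha_I(b+I)=0$; by injectivity of $\alpha_I$ (definition of $\mathfrak J$) we get $b\in I$ for every $I\in\mathfrak J$, hence $b\in I_{\mathfrak J}$, i.e. $b+I_{\mathfrak J}=0$. (One should note $\mathfrak J\neq\varnothing$: by Theorem~\ref{invariant support}, $I_\mu$ for any invariant state $\mu$ lies in $\mathfrak J$ provided it is proper, and if some $I_\mu=B$ one argues directly; in any case if $\mathfrak J=\varnothing$ the statement is vacuous with $I_{\mathfrak J}=B$. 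This edge case should be addressed briefly.)

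Finally, the universal property. Let $I$ be any proper $\alpha$-invariant ideal with injective induced action; then $I\in\mathfrak J$, so $I\supseteq I_{\mathfrak J}$. Therefore the quotient map factors: the identity on $B$ descends to a surjective $*$-homomorphism $q_I:B/I_{\mathfrak J}\twoheadrightarrow B/I$ with $q_I\,\pi_{I_{\mathfrak J}}=\pi_I$. Equivariance $\alpha_I\,q_I=(q_I\otimes id)\alpha_{I_{\mathfrak J}}$ is checked on elements $b+I_{\mathfrak J}$ by unwinding both sides to $(\pi_I\otimes id)\alpha(b)$, using $\alpha_I(b+I)=(\pi_I\otimes id)\alpha(b)$ and $\alpha_{I_{\mathfrak J}}(b+I_{\mathfrak J})=(\pi_{I_{\mathfrak J}}\otimes id)\alpha(b)$ together with $(q_I\otimes id)(\pi_{I_{\mathfrak J}}\otimes id)=(\pi_I\otimes id)$.

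The only genuinely delicate point is the first one: passing the equation $(\pi_{I_{\mathfrak J}}\otimes id)\alpha(b)=\,?$ through the embedding $B/I_{\mathfrak J}\hookrightarrow\prod_I B/I$ and knowing this remains an embedding after tensoring with $A$. The cleanest route is to observe $B/I_{\mathfrak J}$ embeds in the (finite or infinite) $\ell^\infty$-direct product $\prod_{I\in\mathfrak J}B/I$ as a $C^*$-subalgebra, that tensoring a $C^*$-inclusion with $A$ preserves injectivity for the minimal tensor product (exactness is not needed, only that $-\otimes_{\min} A$ is faithful on inclusions, which holds for arbitrary $C^*$-algebras), and that $\alpha(b)\in B\otimes A$ is sent to the tuple $(\alpha(b)\bmod I)_I$; all components vanish, so $(\pi_{I_{\mathfrak J}}\otimes id)\alpha(b)=0$. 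Everything else is bookkeeping entirely parallel to the proof of Theorem~\ref{invariant support}.
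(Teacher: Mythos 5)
Your proofs of injectivity and of the universal property are essentially the paper's own: the paper deduces the theorem from Lemma~\ref{inj lemma}, whose argument is exactly your composition of $\alpha_{I_{\mathfrak{J}}}$ with the canonical surjections $q_I\colon B/I_{\mathfrak{J}}\to B/I$, using injectivity of each $\alpha_I$ to conclude $b\in\bigcap_{I\in\mathfrak{J}}I$, together with the same diagram-chase for equivariance (and the observation $I\in\mathfrak{J}\Rightarrow I\supseteq I_{\mathfrak{J}}$). Where you genuinely diverge is the first step, the $\alpha$-invariance of the intersection. The paper (Proposition~\ref{union is invariant}) slices by states: from $(\pi_I\otimes id)\alpha(b)=0$ it gets $(id\otimes\psi)\alpha(b)\in\bigcap_I I$ for every state $\psi$ on $A$, hence $(\phi\otimes\psi)\bigl((\pi_{I_{\mathfrak{J}}}\otimes id)\alpha(b)\bigr)=0$ for all states $\phi,\psi$, and concludes by Lemma~\ref{tensor states} that product states separate points of the minimal tensor product. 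You instead embed $B/I_{\mathfrak{J}}$ into the $\ell^\infty$-product $\prod_{I\in\mathfrak{J}}B/I$ and tensor with $A$. Both routes work; the paper's is self-contained given its Lemma~\ref{tensor states}, while yours trades that lemma for facts about minimal tensor products of inclusions and products.

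One point in your route is thinner than you acknowledge, and it is precisely the step you flag as delicate. Injectivity of $\theta\otimes id$ for the inclusion $\theta\colon B/I_{\mathfrak{J}}\hookrightarrow\prod_I B/I$ is not by itself enough: to conclude that $(\theta\pi_{I_{\mathfrak{J}}}\otimes id)\alpha(b)=0$ from the vanishing of all ``coordinates'' $(\pi_I\otimes id)\alpha(b)$, you need that the maps $p_I\otimes id$ (with $p_I$ the coordinate projections) jointly separate points of $\bigl(\prod_I B/I\bigr)\otimes_{\min}A$, i.e.\ that the canonical map $\bigl(\prod_I B/I\bigr)\otimes_{\min}A\to\prod_I\bigl(B/I\otimes_{\min}A\bigr)$ is injective. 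This is true --- represent each $B/I$ faithfully on $H_I$ and $A$ on $K$; the blocks $H_I\otimes K$ of $\bigl(\bigoplus_I H_I\bigr)\otimes K$ are reducing, and compression to the $I$-th block implements $p_I\otimes id$ --- but it is a different statement from ``$-\otimes_{\min}A$ preserves inclusions'' and needs to be proved (or replaced by the paper's slice-by-states argument, which avoids the product construction entirely). Your remark on the edge case is harmless but unnecessary: $\mathfrak{J}\neq\varnothing$ always, since invariant states of the form $(\psi\otimes h)\alpha$ exist and $I_\mu=\ker\Phi_\mu$ is proper (as $\Phi_\mu(1)\neq 0$) and lies in $\mathfrak{J}$ by Theorem~\ref{invariant support}.
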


We first prove two preliminary results.
The first one says that the closure of the union of invariant subsets is still invariant.

\begin{proposition}\label{union is invariant}
Consider a compact quantum group action $\alpha$ of $\A$ on $B$. If $I_{\lambda}$ is an  $\alpha$-invariant ideal of $B$ for every $\lambda\in \Lambda$, then $\bigcap_{\lambda\in \Lambda}I_{\lambda}$ is also an $\alpha$-invariant ideal of $B$.
\end{proposition}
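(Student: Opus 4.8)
The plan is to reduce the statement about an arbitrary intersection to the finite-intersection case and then iterate, but in fact the cleanest route is to argue directly using the characterization of invariance. Recall that $I_\lambda$ being $\alpha$-invariant means $(\pi_{I_\lambda}\otimes id)\alpha(b)=0$ for all $b\in I_\lambda$. Set $I=\bigcap_{\lambda\in\Lambda}I_\lambda$; this is manifestly a closed two-sided ideal, so only invariance needs checking. The key observation is that the quotient map $\pi_I:B\to B/I$ factors through each $\pi_{I_\lambda}$: more precisely, the canonical $*$-homomorphism $B/I\to \prod_{\lambda}B/I_\lambda$ (product over the index set, or equivalently the direct sum of the quotient maps) is injective, since $\ker = \bigcap_\lambda I_\lambda = I$. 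Equivalently, writing $\rho_\lambda:B/I\to B/I_\lambda$ for the natural surjections, the family $\{\rho_\lambda\}_{\lambda\in\Lambda}$ separates points of $B/I$.

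First I would fix $b\in I$ and aim to show $(\pi_I\otimes id)\alpha(b)=0$ in $(B/I)\otimes A$. The element $(\pi_I\otimes id)\alpha(b)$ lives in $(B/I)\otimes A$; to show it vanishes it suffices to show that $(\rho_\lambda\otimes id)$ kills it for every $\lambda$, because the maps $\rho_\lambda\otimes id_A:(B/I)\otimes A\to (B/I_\lambda)\otimes A$ are jointly injective — here I use that the minimal tensor product of an injective family (equivalently, that $\bigl(\bigcap_\lambda I_\lambda\bigr)\otimes A=\bigcap_\lambda(I_\lambda\otimes A)$ inside $B\otimes A$, since $-\otimes A$ is exact on the minimal tensor product). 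Then $(\rho_\lambda\otimes id)(\pi_I\otimes id)\alpha(b)=(\pi_{I_\lambda}\otimes id)\alpha(b)$, which is $0$ because $b\in I\subseteq I_\lambda$ and $I_\lambda$ is $\alpha$-invariant. Hence $(\pi_I\otimes id)\alpha(b)=0$, so $I$ is $\alpha$-invariant.

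The one technical point worth spelling out is the joint injectivity of $\{\rho_\lambda\otimes id_A\}$; I would justify it by noting that $B/I$ embeds isometrically into the $C^*$-algebraic product $\prod_\lambda B/I_\lambda$ (or into a direct sum after passing to an appropriate quotient), and that the minimal tensor product with the fixed $C^*$-algebra $A$ preserves injective $*$-homomorphisms and commutes with such products on the relevant elements — equivalently, one can avoid products entirely and argue elementwise: if $x\in(B/I)\otimes A$ satisfies $(\rho_\lambda\otimes id)(x)=0$ for all $\lambda$, then applying $(id\otimes\omega)$ for an arbitrary state $\omega$ on $A$ gives $\rho_\lambda((id\otimes\omega)(x))=0$ for all $\lambda$, whence $(id\otimes\omega)(x)\in\bigcap_\lambda\ker\rho_\lambda=\{0\}$ in $B/I$; since states on $A$ separate points of $(B/I)\otimes A$, we get $x=0$.

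The main obstacle, such as it is, is purely this tensor-product bookkeeping: one must be careful that ``$(\pi_I\otimes id)\alpha(b)=0$ can be verified after applying each $\rho_\lambda\otimes id$'' is legitimate, i.e. that the intersection of the $I_\lambda$ behaves well under $-\otimes A$. Everything else — that $I$ is a closed two-sided ideal, that $(\rho_\lambda\otimes id)\circ(\pi_I\otimes id)=\pi_{I_\lambda}\otimes id$, and the use of the hypothesis — is routine. No appeal to ergodicity or to the Podl\'es algebra is needed here; this is a soft $C^*$-algebraic fact, and the statement even dualizes correctly to the classical assertion that an arbitrary union of closed invariant subsets of $X$ has invariant closure.
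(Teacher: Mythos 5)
Your proof is correct and is essentially the paper's own argument: the paper also fixes $b\in\bigcap_\lambda I_\lambda$, slices by states $\psi$ of $A$ to see that $(id\otimes\psi)\alpha(b)$ lies in every $I_\lambda$, and then concludes $(\pi_\Lambda\otimes id)\alpha(b)=0$ because product states $\phi\otimes\psi$ separate the points of $(B/\bigcap_\lambda I_\lambda)\otimes A$ (Lemma~\ref{tensor states}); your ``joint injectivity of the slice maps $id\otimes\omega$'' is the same fact in a slightly different packaging. One caution: the parenthetical justification via ``$-\otimes A$ is exact on the minimal tensor product'' (equivalently $(\bigcap_\lambda I_\lambda)\otimes A=\bigcap_\lambda(I_\lambda\otimes A)$) is not valid for general $A$ --- this is the slice map/exactness property, which can fail when $A$ is not exact --- so you should drop that remark and rely only on the elementwise slice argument you give, which needs nothing beyond $\bigcap_\lambda\ker\rho_\lambda=\{0\}$ in $B/I$ and the separation of points of the minimal tensor product by product functionals, and which is exactly what the paper does.
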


Before we start to prove the above Proposition~\ref{union is invariant}, we prove the following result which is well-known for experts. For the convenience of the reader,  we give a proof below.

We need the following proposition.
\begin{proposition}\cite[Proposition 3.2.11]{LiB1992}
Let $A_i$ be a unital $C^*$-algebra for every $1\leq i\leq n$. Then the space $A_1'\odot A_2'\cdots \odot A_n' $ of the algebraic tensor product of the dual spaces $A_i'$ is $weak^*$ dense in the dual space $(A_1\otimes A_2\cdots \otimes A_n)'$ of $A_1\otimes A_2\cdots \otimes A_n$.
\end{proposition}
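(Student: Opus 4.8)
The plan is to deduce the weak* density from a Hahn--Banach separation argument, for which the only substantive input is that the ``slice'' functionals of the form $\varphi_1\otimes\cdots\otimes\varphi_n$ separate the points of the minimal tensor product.

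First I would recall the standard duality fact that a linear subspace $V$ of a dual Banach space $Y'$ is weak* dense if and only if its pre-annihilator ${}^{\perp}V=\{y\in Y:\varphi(y)=0\text{ for all }\varphi\in V\}$ equals $\{0\}$; indeed the weak* closure of $V$ coincides with $({}^{\perp}V)^{\perp}$. Applying this with $Y=A_1\otimes\cdots\otimes A_n$ and $V=A_1'\odot\cdots\odot A_n'$, it then suffices to show that if $x\in A_1\otimes\cdots\otimes A_n$ satisfies $(\varphi_1\otimes\cdots\otimes\varphi_n)(x)=0$ for all $\varphi_i\in A_i'$, then $x=0$. Before this one must observe that each $\varphi_1\otimes\cdots\otimes\varphi_n$ really is a bounded functional on the minimal tensor product, so that $V$ is genuinely a subspace of $Y'$: writing each $\varphi_i$ as a linear combination of (at most four) states of $A_i$ via the Jordan decomposition and using that a tensor product of states is a state on the minimal $C^*$-tensor product, one sees $\varphi_1\otimes\cdots\otimes\varphi_n$ is a finite linear combination of states of $A_1\otimes\cdots\otimes A_n$, hence bounded.

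For the separation claim I would pass to faithful representations $A_i\subseteq B(H_i)$, so that $A_1\otimes\cdots\otimes A_n$ sits faithfully inside $B(H_1\otimes\cdots\otimes H_n)$, the minimal tensor norm being the spatial one. For vectors $\xi_i,\eta_i\in H_i$ the vector functionals $\omega_i=\langle(\cdot)\xi_i,\eta_i\rangle|_{A_i}$ lie in $A_i'$, and a computation on elementary tensors, extended by linearity and norm continuity, yields $(\omega_1\otimes\cdots\otimes\omega_n)(x)=\langle x(\xi_1\otimes\cdots\otimes\xi_n),\,\eta_1\otimes\cdots\otimes\eta_n\rangle$. If the left-hand side vanishes for every choice of $\xi_i,\eta_i$, then, since the elementary tensors $\xi_1\otimes\cdots\otimes\xi_n$ span a dense subspace of $H_1\otimes\cdots\otimes H_n$, the operator $x$ must be $0$. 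This shows ${}^{\perp}V=\{0\}$ and therefore that $V$ is weak* dense in $(A_1\otimes\cdots\otimes A_n)'$.

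The argument is essentially routine; the steps needing a little care are the bookkeeping that $\varphi_1\otimes\cdots\otimes\varphi_n$ is well defined and bounded on the minimal tensor product (so that the Hahn--Banach characterization of weak* density is legitimately applicable) and the verification of the displayed slice formula on elementary tensors. Alternatively one could organize the same reasoning as an induction on $n$, reducing to the case $n=2$ through the bounded slice maps $id\otimes\varphi_n\colon A_1\otimes\cdots\otimes A_n\to A_1\otimes\cdots\otimes A_{n-1}$, but the direct representation-theoretic argument handles all $n$ simultaneously.
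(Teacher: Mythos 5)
Your argument is correct: the Hahn--Banach/pre-annihilator reduction, the boundedness of $\varphi_1\otimes\cdots\otimes\varphi_n$ via Jordan decomposition into states, and the separation of points by vector functionals in a faithful spatial representation together give a complete proof. Note that the paper itself offers no proof of this proposition --- it is quoted verbatim from Li's book \cite[Proposition 3.2.11]{LiB1992} --- so there is nothing to compare against; your write-up is the standard textbook argument (the same point-separation fact is what the paper actually extracts in its Lemma on tensor states), and it correctly flags the one step needing care, namely that each product functional is well defined and bounded on the minimal tensor product.
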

\begin{lemma}\label{tensor states}
For two unital $C^*$-algebras $A$ and $B$, the set $\mathcal{T}=\{\varphi\otimes \psi|\varphi\in S(A),\,\psi\in S(B)\}$ separates the points of $A\otimes B$.
\end{lemma}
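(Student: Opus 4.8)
The plan is to show that if $x \in A \otimes B$ satisfies $(\varphi \otimes \psi)(x) = 0$ for all states $\varphi$ on $A$ and $\psi$ on $B$, then $x = 0$. The key point is that states span the dual space of a $C^*$-algebra: every bounded linear functional on a $C^*$-algebra is a linear combination of (at most four) states, since every self-adjoint functional is a difference of positive functionals by the Jordan decomposition, and every functional is a combination of two self-adjoint ones via $\omega = \tfrac{1}{2}(\omega + \omega^*) + \tfrac{i}{2}\cdot\tfrac{1}{i}(\omega - \omega^*)$ where $\omega^*(a) = \overline{\omega(a^*)}$. Positive functionals are nonnegative multiples of states (normalizing by the norm, which for a positive functional equals its value at the identity).

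So first I would note that the vanishing of $(\varphi \otimes \psi)(x)$ for all states $\varphi \in S(A)$, $\psi \in S(B)$ extends by linearity to $(\omega \otimes \rho)(x) = 0$ for all bounded linear functionals $\omega$ on $A$ and $\rho$ on $B$; that is, $x$ is annihilated by every elementary tensor $\omega \otimes \rho$ in $A' \odot B'$. Next I would invoke the cited Proposition~\cite[Proposition 3.2.11]{LiB1992}, which says $A' \odot B'$ is $\mathrm{weak}^*$-dense in $(A \otimes B)'$. Hence every $\Omega \in (A \otimes B)'$ is a $\mathrm{weak}^*$-limit of elements of $A' \odot B'$, each of which annihilates $x$; therefore $\Omega(x) = 0$ for all $\Omega \in (A \otimes B)'$. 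Since the dual of a Banach space separates its points (Hahn--Banach), this forces $x = 0$. Thus $\mathcal{T}$ separates the points of $A \otimes B$.

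There is no serious obstacle here; the only point requiring a little care is the reduction from states to arbitrary functionals (the Jordan-type decomposition and the fact that the norm of a positive functional on a unital $C^*$-algebra is attained at $1$), which is standard $C^*$-algebra theory, together with correctly quoting the $\mathrm{weak}^*$-density statement for algebraic tensor products of dual spaces. I would present the argument in the order: (i) states span the dual of each factor, so the hypothesis upgrades to all of $A' \odot B'$ annihilating $x$; (ii) $\mathrm{weak}^*$-density of $A' \odot B'$ in $(A \otimes B)'$; (iii) conclude $(A \otimes B)'$ annihilates $x$, hence $x = 0$ by Hahn--Banach.
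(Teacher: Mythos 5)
Your proposal is correct and follows essentially the same route as the paper: reduce from states to all bounded functionals via the fact that states span the dual of each factor, then invoke the cited weak$^*$-density of $A'\odot B'$ in $(A\otimes B)'$ together with Hahn--Banach to conclude $x=0$. You merely spell out the Jordan-type decomposition details that the paper leaves implicit.
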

\begin{proof}
From the above proposition, $A'\odot B'$ separates the points of $A\otimes B$. Note that every element of $A'\otimes B'$ is a linear combination of some elements in $\mathcal{T}$, so  $\mathcal{T}$ separates the points of $A\otimes B$.
\end{proof}

\begin{proof}[Proof of Proposition~\ref{union is invariant}]\
Take any $b\in \bigcap_{\lambda}I_{\lambda}$ and any $\psi\in S(A)$. By the invariance of $I_{\lambda}$, we have that $\pi_{\lambda}((id\otimes \psi)\alpha(b))=0$ for all $\lambda\in \Lambda$.
Hence  for any $\psi\in S(A)$, we have that $(id\otimes \psi)\alpha(b)\in \bigcap_{\lambda}I_{\lambda}$. Denote the quotient map from $B$ onto $B/\bigcap_{\lambda\in \Lambda}I_{\lambda}$ by $\pi_{\Lambda}$. Then we get  $(\pi_{\Lambda}\otimes\psi)\alpha(b)=0$ for all  $\psi\in S(A)$. It follows that $(\phi\otimes\psi)((\pi_{\Lambda}\otimes id)\alpha(b))=0$ for all $\phi\in S(B/\bigcap_{\lambda\in \Lambda}I_{\lambda})$ and  all  $\psi\in S(A)$. By Lemma~\ref{tensor states}
the set $\{\phi\otimes\psi\}_{\phi\in S(B/\bigcap_{\lambda\in \Lambda}I_{\lambda}),\psi\in S(A)}$ separates the points of $(B/\bigcap_{\lambda\in \Lambda}I_{\lambda})\otimes A$.
So $(\pi_{\Lambda}\otimes id)\alpha(b)=0$ and $\bigcap_{\lambda}I_{\lambda}$ is invariant.
\end{proof}

\begin{lemma}\label{inj lemma}
Consider a compact quantum group action $\alpha$ of $\A$ on $B$. Suppose that for every $\lambda\in \Lambda$, the ideal $I_{\lambda}$ is a proper $\alpha$-invariant ideal of $B$ such that the induced actions on $B/I_{\lambda}$ is injective. Denote $\bigcap_{\lambda\in\Lambda}I_{\lambda}$ by $I_{\Lambda}$. The ideal $I_{\Lambda}$ is also an $\alpha$-invariant ideal and the induced action on $B/I_{\Lambda}$ is injective. Furthermore, the quotient map from $B/I_{\Lambda}$ onto $B/I_{\lambda}$ is equivariant for every $\lambda\in \Lambda$.
\end{lemma}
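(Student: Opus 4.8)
The plan is to settle the three assertions in turn, deducing the first from Proposition~\ref{union is invariant} and obtaining the other two by a short diagram chase. Since each $I_{\lambda}$ is a proper $\alpha$-invariant ideal, Proposition~\ref{union is invariant} immediately gives that $I_{\Lambda}=\bigcap_{\lambda\in\Lambda}I_{\lambda}$ is $\alpha$-invariant, so the induced action $\alpha_{I_{\Lambda}}$ on $B/I_{\Lambda}$ is defined. Because $I_{\Lambda}\subseteq I_{\lambda}$ for every $\lambda$, there is a canonical surjective $*$-homomorphism $q_{\lambda}\colon B/I_{\Lambda}\to B/I_{\lambda}$ with $\pi_{I_{\lambda}}=q_{\lambda}\pi_{I_{\Lambda}}$, and $q_{\lambda}\otimes id$ is then a well-defined $*$-homomorphism from $(B/I_{\Lambda})\otimes A$ to $(B/I_{\lambda})\otimes A$.

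Next I would check that each $q_{\lambda}$ is equivariant, which is just an unwinding of the definitions of the induced actions: for $b\in B$,
\begin{align*}
\alpha_{I_{\lambda}}\big(q_{\lambda}(b+I_{\Lambda})\big) &= \alpha_{I_{\lambda}}(b+I_{\lambda}) = (\pi_{I_{\lambda}}\otimes id)\alpha(b) \\
&= (q_{\lambda}\otimes id)(\pi_{I_{\Lambda}}\otimes id)\alpha(b) = (q_{\lambda}\otimes id)\,\alpha_{I_{\Lambda}}(b+I_{\Lambda}),
\end{align*}
where the third equality uses $\pi_{I_{\lambda}}=q_{\lambda}\pi_{I_{\Lambda}}$. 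Hence $\alpha_{I_{\lambda}}\,q_{\lambda}=(q_{\lambda}\otimes id)\,\alpha_{I_{\Lambda}}$, which is the last claim of the lemma.

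Finally, for injectivity of $\alpha_{I_{\Lambda}}$, suppose $\alpha_{I_{\Lambda}}(b+I_{\Lambda})=0$ for some $b\in B$. Applying $q_{\lambda}\otimes id$ and using the equivariance just established, $\alpha_{I_{\lambda}}(b+I_{\lambda})=(q_{\lambda}\otimes id)\,\alpha_{I_{\Lambda}}(b+I_{\Lambda})=0$; since the induced action on $B/I_{\lambda}$ is injective by hypothesis, $b\in I_{\lambda}$. As $\lambda\in\Lambda$ is arbitrary, $b\in\bigcap_{\lambda}I_{\lambda}=I_{\Lambda}$, i.e.\ $b+I_{\Lambda}=0$, so $\alpha_{I_{\Lambda}}$ is injective. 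I do not expect a genuine obstacle here: the only substantive ingredient is Proposition~\ref{union is invariant} (which itself rests on Lemma~\ref{tensor states}), and everything else is formal; the one point to watch is keeping the bookkeeping of the three maps $\pi_{I_{\Lambda}}$, $\pi_{I_{\lambda}}$, and $q_{\lambda}$ straight and confirming that $q_{\lambda}\otimes id$ is legitimate on the minimal tensor products, both of which are automatic.
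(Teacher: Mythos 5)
Your proposal is correct and follows essentially the same route as the paper: invariance of $I_{\Lambda}$ from Proposition~\ref{union is invariant}, equivariance of the canonical quotient maps $B/I_{\Lambda}\to B/I_{\lambda}$ by unwinding the definitions of the induced actions, and injectivity by pushing a kernel element through each quotient and intersecting. The only cosmetic difference is that the paper records the equivariance via a commutative diagram where you write out the computation explicitly.
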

\begin{proof}
The invariance of $I_{\Lambda}$ follows from Proposition~\ref{union is invariant}. Denote the induced action on $B/I_{\Lambda}$ by $\alpha_{\Lambda}$   and the quotient maps from $B$ onto $B/I_{\lambda}$  and $B/I_{\Lambda}$ by $\pi_{\lambda}$ and $\pi_{\Lambda}$ respectively. For every $B/I_{\lambda}$, we denote the canonical surjective map from $B/I_{\Lambda}$ onto $B/I_{\lambda}$ by $\pi_{\lambda\Lambda}$. Since $\pi_{\lambda\Lambda}\pi_{\Lambda}=\pi_\lambda$ and both $\pi_{\lambda}$ and $\pi_{\Lambda}$ are equivariant, we have the commutative diagram:
\[
\begin{CD}
    B/I_{\Lambda}  @>\alpha_{\Lambda}>> B/I_{\Lambda}\otimes A\\
    @V \pi_{\lambda\Lambda} VV  @V \pi_{\lambda\Lambda}\otimes id VV\\
    B/I_{\lambda}  @>\alpha_{\lambda}>>  B/I_{\lambda}\otimes A
\end{CD}
\]
This proves the equivariance of $\pi_{\lambda\Lambda}$. Moreover,  if $\alpha_{\Lambda}(b+I_{\Lambda})=0$ for some $b\in B$, then
$$0=(\pi_{\lambda\Lambda}\otimes id)\alpha_{\Lambda}(b+I_{\Lambda})=\alpha_{\lambda}\pi_{\lambda\Lambda}(b+I_{\Lambda}).$$
Note that $\alpha_{\lambda}$ is injective, thus $b+I_{\lambda}=\pi_{\lambda\Lambda}(b+I_{\Lambda})=0$ for all $\lambda\in \Lambda$. Therefore $b\in I_{\Lambda}$, which proves the injectivity of $\alpha_{\Lambda}$.
\end{proof}
Theorem~\ref{largest inv subset admitting injective action} follows from Lemma~\ref{inj lemma} immediately.

\begin{example}[Examples of invariant ideals]\label{ex:Inv sub}
\
\begin{enumerate}
\item Consider that a compact quantum group $\mathcal{G}$ acts on $A$ by $\Delta$. The Haar measure $h$ is the unique $\Delta$-invariant state on $A$. Since $N_{h}$ is an ideal~\cite[Proposition 7.9]{MV1998}, we have that $I_h=N_h$. Hence $N_h$ is an invariant ideal of $A$.
\item If $B$ is commutative, then $N_{\mu}=I_{\mu}$ for every $\alpha$-invariant state $\mu$ on $B$ and $N_{\mu}$ is an $\alpha$-invariant ideal of $B$ by Theorem~\ref{invariant support}.
\end{enumerate}
\end{example}

\subsection{Kac algebra and tracial invariant states}

\begin{definition}
A compact quantum group $\mathcal{G}$ is called a \textbf{Kac algebra} if one of the following equivalent conditions holds~\cite[Theorem 1.5]{Woronowicz1998}~\cite[Example 1.1]{Banica1999-2}~\cite[Definition 8.1]{Banica1999}:
\begin{enumerate}
\item The Haar measure $h$ of $\mathcal{G}$ is tracial.
\item The antipode $\kappa$ of $\mathcal{G}$ satisfies that $\kappa^2=id$ on $\mathscr{A}$.
\item $F^{\gamma}=id$ for all $\gamma\in \widehat{\mathcal{G}}$.
\end{enumerate}
\end{definition}

For an ergodic action $\alpha$ of  a compact quantum group $\mathcal{G}$ on $B$, in general, the unique $\alpha$-invariant state $\mu$ on $B$ is not necessarily tracial~(See Remark 3.34 below). In~\cite{Goswami2010}, Goswami showed that if $\mathcal{G}$ acts on a unital $C^*$-algebra $B$ ergodically and faithfully, and the unique $\alpha$-invariant state $\mu$ on $B$ is tracial, then $\mathcal{G}$ is a Kac algebra~\cite[Theorem 3.2]{Goswami2010}. Actually Goswami proved this result with the assumption that $B$ is commutative, but his proof works in the noncommutative case with the assumption of the traciality of $\mu$.

Using a different method, we generalize this result to faithful ~(not necessarily ergodic) actions, and show that traciality  of $h$ depends on traciality of invariant states~(see Theorem~\ref{inv state is tracial haar measure is tracial} below).

\begin{lemma}\label{tracial lemma}
Suppose that $\mathcal{G}$ acts on $B$ by $\alpha$. Take $\gamma\in\widehat{\mathcal{G}}$ such that ${\rm mul}(B,\gamma)>0$. If there exists a state $\varphi$ on $B$ satisfying that $$\varphi(\sum_{1\leq s\leq d_{\gamma}}e_{\gamma ks}e_{\gamma ks}^*)>0$$ and $(\varphi\otimes h)\alpha(e_{\gamma kj}e_{\gamma ki}^*)=(\varphi\otimes h)\alpha(e_{\gamma ki}^*e_{\gamma kj})$ for some $1\leq k\leq {\rm mul}(B,\gamma)$ and all $1\leq i,j\leq d_{\gamma}$, then $F^{\gamma}=id$.
\end{lemma}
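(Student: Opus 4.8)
The plan is to exploit the invariance of the state $\mu := (\varphi \otimes h)\alpha$ together with the Haar-orthogonality relations \eqref{eq:h1} and \eqref{eq:h2}. First I would compute both sides of the hypothesized equality $\mu(e_{\gamma kj}e_{\gamma ki}^*) = \mu(e_{\gamma ki}^*e_{\gamma kj})$ explicitly by expanding $\alpha$ on each factor. Since $\alpha(e_{\gamma ki}) = \sum_{l} e_{\gamma kl} \otimes u^{\gamma}_{li}$, we get $\alpha(e_{\gamma kj}e_{\gamma ki}^*) = \sum_{l,m} e_{\gamma kl}e_{\gamma km}^* \otimes u^{\gamma}_{lj}(u^{\gamma}_{mi})^*$, so applying $\varphi \otimes h$ and using \eqref{eq:h1},
\[
\mu(e_{\gamma kj}e_{\gamma ki}^*) = \sum_{l,m} \varphi(e_{\gamma kl}e_{\gamma km}^*)\, h\big(u^{\gamma}_{lj}(u^{\gamma}_{mi})^*\big) = \sum_{l,m} \varphi(e_{\gamma kl}e_{\gamma km}^*)\,\frac{\delta_{lm}F^{\gamma}_{ij}}{M_{\gamma}} = \frac{F^{\gamma}_{ij}}{M_{\gamma}} \sum_{s} \varphi(e_{\gamma ks}e_{\gamma ks}^*).
\]
Analogously, using $\alpha(e_{\gamma ki}^* e_{\gamma kj}) = \sum_{l,m} e_{\gamma kl}^* e_{\gamma km} \otimes (u^{\gamma}_{li})^* u^{\gamma}_{mj}$ and relation \eqref{eq:h2}, which gives $h\big((u^{\gamma}_{li})^* u^{\gamma}_{mj}\big) = \delta_{ij}(F^{\gamma})^{-1}_{ml}/M_{\gamma}$,
\[
\mu(e_{\gamma ki}^* e_{\gamma kj}) = \frac{\delta_{ij}}{M_{\gamma}} \sum_{l,m} \varphi(e_{\gamma kl}^* e_{\gamma km})\,(F^{\gamma})^{-1}_{ml}.
\]

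Set $c := \sum_{s} \varphi(e_{\gamma ks}e_{\gamma ks}^*)$, which is strictly positive by hypothesis, and let $P$ be the positive matrix $P_{lm} := \varphi(e_{\gamma km}^* e_{\gamma kl})$ (so that $\sum_{l,m}\varphi(e_{\gamma kl}^* e_{\gamma km})(F^{\gamma})^{-1}_{ml} = \mathrm{tr}(P (F^{\gamma})^{-1})$, a fixed nonnegative scalar independent of $i,j$). Equating the two expressions, the hypothesis forces
\[
\frac{c}{M_{\gamma}}\, F^{\gamma}_{ij} = \frac{\delta_{ij}}{M_{\gamma}}\,\mathrm{tr}\big(P (F^{\gamma})^{-1}\big) \qquad \text{for all } 1 \le i,j \le d_{\gamma}.
\]
Hence $F^{\gamma}_{ij} = 0$ for $i \ne j$, i.e. $F^{\gamma}$ is diagonal, and $F^{\gamma}_{ii} = \mathrm{tr}(P(F^{\gamma})^{-1})/c =: t$ is the same constant for every $i$. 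Thus $F^{\gamma} = t\cdot id$. Now $t > 0$ since $F^{\gamma} > 0$, and the normalization $\mathrm{tr}(F^{\gamma}) = \mathrm{tr}((F^{\gamma})^{-1})$ forces $t\, d_{\gamma} = t^{-1} d_{\gamma}$, so $t = 1$ and $F^{\gamma} = id$.

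I expect the only real care needed is bookkeeping: correctly tracking which index of $u^{\gamma}$ carries the conjugate and which entry of $F^{\gamma}$ (or its inverse) appears, so that the $i,j$ dependence lands exactly as $F^{\gamma}_{ij}$ on one side and $\delta_{ij}\cdot(\text{scalar})$ on the other. Once the two closed-form expressions above are in hand, the conclusion is immediate linear algebra, so the Haar relations \eqref{eq:h1}–\eqref{eq:h2} are doing all the work and there is no genuine obstacle beyond this computation.
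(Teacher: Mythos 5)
Your proposal is correct and follows essentially the same route as the paper: expand $\alpha$ on the products, apply the Haar orthogonality relations \eqref{eq:h1} and \eqref{eq:h2} to get $F^{\gamma}_{ij}$ times a positive scalar on one side and $\delta_{ij}$ times a fixed scalar on the other, conclude that $F^{\gamma}$ is scalar, and use $\mathrm{tr}(F^{\gamma})=\mathrm{tr}((F^{\gamma})^{-1})$ to force $F^{\gamma}=id$. The index bookkeeping in your computation matches the paper's, so there is nothing to correct.
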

\begin{proof}
For convenience, in the proof we denote $F^\gamma$ by $F$ for $\gamma\in \widehat{\mathcal{G}}$. Recall that for $\gamma_1,\gamma_2\in \widehat{\mathcal{G}}$, $1\leq m,k\leq d_{\gamma_1}$ and $1\leq n,l\leq d_{\gamma_2}$, we have that
\begin{equation*}
h(u^{\gamma_1}_{mk}u^{\gamma_2*}_{nl})=\frac{\delta_{\gamma_1\gamma_2}\delta_{mn}F_{lk}}{M_{\gamma_1}},
\end{equation*}
and
\begin{equation*}
h(u^{\gamma_1*}_{km}u^{\gamma_2}_{ln})=\frac{\delta_{\gamma_1\gamma_2}\delta_{mn}(F^{-1})_{lk}}{M_{\gamma_1}}.
\end{equation*}
Hence
\begin{equation*}
\begin{split}
&(\varphi\otimes h)\alpha(e_{\gamma kj}e_{\gamma ki}^*)                                                      \\
&=\sum_{1\leq s,t\leq d_{\gamma}}\varphi(e_{\gamma ks}e_{\gamma kt}^*)h(u^{\gamma}_{sj}(u^{\gamma}_{ti})^*)   \\
&=\sum_{1\leq s,t\leq d_{\gamma}}\varphi(e_{\gamma ks}e_{\gamma kt}^*)\delta_{st}
\frac{F_{ij}}{M_{\gamma}}         \\
&=\sum_{1\leq s\leq d_{\gamma}}\varphi(e_{\gamma ks}e_{\gamma ks}^*)\frac{F_{ij}}{M_{\gamma}},  \notag             \end{split}
\end{equation*}

and
\begin{equation*}
\begin{split}
&(\varphi\otimes h)\alpha(e_{\gamma ki}^*e_{\gamma kj})                                                      \\
&=\sum_{1\leq s,t\leq d_{\gamma}}\varphi(e_{\gamma ks}^*e_{\gamma kt})h((u^{\gamma}_{si})^*u^{\gamma}_{tj})   \\
&=\sum_{1\leq s,t\leq d_{\gamma}}\varphi(e_{\gamma ks}^*e_{\gamma kt})
(F^{-1})_{ts}\frac{\delta_{ij}}{M_{\gamma}}.            \notag
\end{split}
\end{equation*}

From $(\varphi\otimes h)\alpha(e_{\gamma kj}e_{\gamma ki}^*)=(\varphi\otimes h)\alpha(e_{\gamma ki}^*e_{\gamma kj})$ and $\sum_{1\leq s\leq d_{\gamma}}\varphi(e_{\gamma ks}e_{\gamma ks}^*)>0$, we have that
$$F_{ij}=\frac{\sum_{1\leq s,t\leq d_{\gamma}}\varphi(e_{\gamma ks}^*e_{\gamma kt})
(F^{-1})_{ts}\delta_{ij}}{\sum_{1\leq s\leq d_{\gamma}}\varphi(e_{\gamma ks}e_{\gamma ks}^*)},$$
which implies that $F$ is a scalar matrix under a fixed orthonormal basis of $H_{\gamma}$.
 Note that $tr(F)=tr(F^{-1})$, hence we get $F=I$ under a fixed orthonormal basis of $H_{\gamma}$, which means that $F=id$.
\end{proof}

\begin{proposition}\label{tracial inv states imply tracial haar measure}
Suppose that a compact quantum group $\mathcal{G}$ acts on $B$ by $\alpha$. If one of the following two conditions is true:
  \begin{enumerate}
  \item every invariant state on $B$ is tracial,
  \item there exists a faithful  tracial invariant state,
  \end{enumerate}
  then for all  $\gamma\in\widehat{\mathcal{G}}$ such that ${\rm mul}(B,\gamma)>0$, we have that $F^{\gamma}=id$.
\end{proposition}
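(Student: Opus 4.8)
The plan is to reduce Proposition~\ref{tracial inv states imply tracial haar measure} to Lemma~\ref{tracial lemma} by producing, for each $\gamma\in\widehat{\mathcal{G}}$ with ${\rm mul}(B,\gamma)>0$, a suitable state $\varphi$ on $B$. Fix such a $\gamma$ and an index $k$ with $1\le k\le{\rm mul}(B,\gamma)$. The element $c:=\sum_{1\le s\le d_\gamma}e_{\gamma ks}e_{\gamma ks}^*$ is a positive element of $B$; moreover it is nonzero, since the $e_{\gamma ks}$ are part of a basis of $B_\gamma$ and in particular not all zero. Hence there is a state $\varphi_0$ on $B$ with $\varphi_0(c)>0$, which takes care of the first hypothesis of Lemma~\ref{tracial lemma}. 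The real work is to arrange that $\varphi$ also satisfies $(\varphi\otimes h)\alpha(e_{\gamma kj}e_{\gamma ki}^*)=(\varphi\otimes h)\alpha(e_{\gamma ki}^*e_{\gamma kj})$ for all $i,j$, while keeping $\varphi(c)>0$.

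Under hypothesis (1), every invariant state on $B$ is tracial. Write $\mu:=(\varphi_0\otimes h)\alpha$; this is an $\alpha$-invariant state on $B$, hence tracial by assumption, so $\mu(e_{\gamma kj}e_{\gamma ki}^*)=\mu(e_{\gamma ki}^*e_{\gamma kj})$, i.e.\ $(\varphi_0\otimes h)\alpha(e_{\gamma kj}e_{\gamma ki}^*)=(\varphi_0\otimes h)\alpha(e_{\gamma ki}^*e_{\gamma kj})$ for all $i,j$. Thus $\varphi:=\varphi_0$ already works, provided $\varphi_0(c)>0$, which we arranged. Applying Lemma~\ref{tracial lemma} gives $F^\gamma=id$.

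Under hypothesis (2), let $\tau$ be a faithful tracial invariant state. Because $\tau$ is faithful and $c\ge 0$ is nonzero, $\tau(c)=\tau(c^{1/2}c^{1/2})>0$; so $\varphi:=\tau$ meets the positivity requirement. Since $\tau$ is tracial, $\tau(e_{\gamma kj}e_{\gamma ki}^*)=\tau(e_{\gamma ki}^*e_{\gamma kj})$. It remains to replace these bare traciality identities by the identities involving $(\tau\otimes h)\alpha$ demanded by the lemma; but $\tau$ is invariant, so $(\tau\otimes h)\alpha(b)=\tau(b)$ for all $b\in B$ (this is the defining property of invariance combined with $h$ being a state). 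Hence $(\tau\otimes h)\alpha(e_{\gamma kj}e_{\gamma ki}^*)=\tau(e_{\gamma kj}e_{\gamma ki}^*)=\tau(e_{\gamma ki}^*e_{\gamma kj})=(\tau\otimes h)\alpha(e_{\gamma ki}^*e_{\gamma kj})$, and Lemma~\ref{tracial lemma} again yields $F^\gamma=id$.

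The only point requiring a little care is the observation that an invariant state $\mu$ satisfies $\mu=(\mu\otimes h)\alpha$; this is exactly the remark recorded in the preliminaries that $Inv_\alpha=\{(\psi\otimes h)\alpha\mid\psi\in S(B)\}$ together with the fact that $(\mu\otimes id)\alpha(b)=\mu(b)1_A$ for invariant $\mu$ (apply $h$). I do not anticipate a serious obstacle: both cases collapse to feeding an appropriate state into Lemma~\ref{tracial lemma}, and the main thing to get right is the bookkeeping that lets one pass between $\mu(\cdot)$ and $(\mu\otimes h)\alpha(\cdot)$, and the nonvanishing of $c$.
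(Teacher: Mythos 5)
Your proposal is correct and follows essentially the same route as the paper: in case (1) you feed $(\varphi_0\otimes h)\alpha$ (invariant, hence tracial) into Lemma~\ref{tracial lemma} after choosing $\varphi_0$ positive on $\sum_s e_{\gamma ks}e_{\gamma ks}^*$, and in case (2) you use the faithful tracial invariant state itself together with $(\tau\otimes h)\alpha=\tau$. This matches the paper's proof, including the verification of the positivity hypothesis of the lemma.
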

\begin{proof}
 Suppose that every invariant state on $B$ is tracial. Note that $(\varphi\otimes h)\alpha$ is an $\alpha$-invariant state for any $\varphi\in S(B)$. By assumption $(\varphi\otimes h)\alpha$ is tracial. For any $\gamma\in\widehat{\mathcal{G}}$ with ${\rm mul}(B,\gamma)>0$, since for any $1\leq k\leq {\rm mul}(B,\gamma)$, $\sum_{1\leq s\leq d_{\gamma}}e_{\gamma ks}e_{\gamma ks}^*>0$, there exists a $\varphi_{\gamma}\in S(B)$ satisfying that $\sum_{1\leq s\leq d_{\gamma}}\varphi_{\gamma}(e_{\gamma ks}e_{\gamma ks}^*)>0$. Hence by Lemma~\ref{tracial lemma} we have that $F^{\gamma}=id$.

On the other hand, if there exists a faithful  tracial invariant state on $B$, say $\psi$, then $(\psi\otimes h)\alpha=\psi$ and $\psi$ satisfies the conditions of Lemma~\ref{tracial lemma}. Hence $F^{\gamma}=id$ for all  $\gamma\in\widehat{\mathcal{G}}$ with positive ${\rm mul}(B,\gamma)$.
\end{proof}
\begin{remark}
A special case of Proposition~\ref{tracial inv states imply tracial haar measure} is the following:

 If $\alpha$ is ergodic and  the unique $\alpha$-invariant state $\mu$ is tracial, then  for all  $\gamma\in\widehat{\mathcal{G}}$ such that ${\rm mul}(B,\gamma)>0$, we have that $F^{\gamma}=id$.

A slightly different version of this result appears in~\cite[Theorem 3.1]{Pinzari2012} where a necessary and sufficient condition of traciality of the unique invariant state of an ergodic action is given.
\end{remark}

\begin{theorem}\label{inv state is tracial haar measure is tracial}
Suppose that a compact quantum group~ $\mathcal{G}$ acts on $B$ by $\alpha$ faithfully. If one of the following two conditions is true:
  \begin{enumerate}
  \item every invariant state on $B$ is tracial,
  \item there exists a faithful  tracial invariant state on $B$,
  \end{enumerate}
then $\mathcal{G}$ is  a Kac algebra.
\end{theorem}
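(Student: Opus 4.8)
The plan is to combine Proposition~\ref{tracial inv states imply tracial haar measure} with the faithfulness of $\alpha$ and with the characterization of Kac algebras via the operators $F^\gamma$. By Proposition~\ref{tracial inv states imply tracial haar measure}, either of the two hypotheses forces $F^\gamma = id$ for every $\gamma \in \widehat{\mathcal{G}}$ with ${\rm mul}(B,\gamma)>0$. So the remaining task is to propagate this from the set $S = \{\gamma \in \widehat{\mathcal{G}} : {\rm mul}(B,\gamma)>0\}$ to all of $\widehat{\mathcal{G}}$, and this is exactly where faithfulness enters.

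First I would recall that the functor $\gamma \mapsto F^\gamma$ is compatible with tensor products, direct sums and conjugates of representations: $F^{\gamma_1 \otimes \gamma_2} = F^{\gamma_1} \otimes F^{\gamma_2}$ on the appropriate carrier space, $F^{\gamma^c} = (F^\gamma)^{-1}$ (up to the canonical identification), and $F$ restricts correctly to subrepresentations. Consequently, if $F^\gamma = id$ for all $\gamma \in S$, then $F^\sigma = id$ for every irreducible subrepresentation $\sigma$ of any tensor product of representations drawn from $S$ and their conjugates. Now by Proposition~\ref{faithfulness}, faithfulness of $\alpha$ gives $\mathscr{A}_2 = \mathscr{A}$, where $\mathscr{A}_2$ is the $*$-algebra generated by the matrix coefficients $u^\gamma_{ij}$ with $\gamma \in S$. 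Since $\mathscr{A}$ is the linear span of all matrix coefficients $u^\lambda_{ij}$ over $\lambda \in \widehat{\mathcal{G}}$, every such $u^\lambda_{ij}$ is a linear combination of products of matrix coefficients of representations in $S$ and their conjugates; equivalently, every irreducible $\lambda \in \widehat{\mathcal{G}}$ is (equivalent to) a subrepresentation of some tensor product of members of $S$ and their contragredients. Applying the previous paragraph, $F^\lambda = id$ for all $\lambda \in \widehat{\mathcal{G}}$, which is condition (3) in the definition of a Kac algebra, hence $\mathcal{G}$ is a Kac algebra.

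The step I expect to require the most care is the passage ``$\mathscr{A}_2 = \mathscr{A}$ implies every irreducible $\lambda$ sits inside a tensor product of members of $S$ and their conjugates.'' One clean way to phrase it: let $\mathcal{R} \subseteq \widehat{\mathcal{G}}$ be the set of irreducibles occurring in tensor products of representations in $S$ and conjugates of representations in $S$; this $\mathcal{R}$ is closed under tensor product and conjugation, so the linear span $\mathscr{A}_{\mathcal{R}}$ of the corresponding matrix coefficients is a $*$-subalgebra of $\mathscr{A}$, and it visibly contains all $u^\gamma_{ij}$ with $\gamma \in S$, hence contains $\mathscr{A}_2 = \mathscr{A}$; since distinct irreducibles have linearly independent matrix coefficients (orthogonality relations), $\mathscr{A}_{\mathcal{R}} = \mathscr{A}$ forces $\mathcal{R} = \widehat{\mathcal{G}}$. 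Alternatively one can invoke directly that $\mathscr{A}_2$ is a Hopf $*$-subalgebra and a compact quantum group has a unique dense Hopf $*$-subalgebra (already used in Proposition~\ref{faithfulness}, part $(4)\Leftrightarrow(5)$), which gives $\mathscr{A}_2 = \mathscr{A}$ directly, after which the tensor-product/subrepresentation argument yields $\mathcal{R} = \widehat{\mathcal{G}}$. The compatibility relations for $F^\gamma$ under tensor products and conjugates are standard (they follow from uniqueness of the positive intertwiner in $Mor(U^\gamma, U^{\gamma^{cc}})$ with the normalization $tr(F^\gamma) = tr(F^\gamma)^{-1}$), so I would cite them rather than reprove them.

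Finally, for completeness I would note that conditions (1) and (2) are handled uniformly: both are exactly the hypotheses of Proposition~\ref{tracial inv states imply tracial haar measure}, so no case split is needed beyond invoking that proposition, and the conclusion $F^\gamma = id$ for all $\gamma \in \widehat{\mathcal{G}}$ is then converted to traciality of $h$ through the equivalence in the definition of a Kac algebra.
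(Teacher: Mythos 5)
Your proposal is correct, but its endgame differs from the paper's. Both arguments share the same skeleton: Proposition~\ref{tracial inv states imply tracial haar measure} gives $F^\gamma=id$ for every $\gamma$ with ${\rm mul}(B,\gamma)>0$, and faithfulness enters only through Proposition~\ref{faithfulness} in the form $\mathscr{A}_2=\mathscr{A}$. Where you diverge is in how this is propagated to the Kac property: you verify condition (3) of the definition, pushing $F^\lambda=id$ to \emph{all} irreducibles via the monoidal behaviour of $F$ (multiplicativity under tensor products, compatibility with subrepresentations and conjugates) together with the observation that $\mathscr{A}_2=\mathscr{A}$ forces every irreducible to occur in a tensor product of representations with positive multiplicity and their contragredients; your linear-independence argument for that last step is sound. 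The paper instead verifies condition (2): it invokes Woronowicz's identity $(id\otimes\kappa^2)u^{\gamma}=F^{\gamma}u^{\gamma}(F^{\gamma})^{-1}$, so $F^\gamma=id$ on the positive-multiplicity classes makes $\kappa^2$ fix the generators of $\mathscr{A}_2$, and since $\kappa^2$ is an algebra homomorphism it is the identity on $\mathscr{A}_2=\mathscr{A}$ (after the small check that $\mathscr{A}_2$ is $*$-closed, using ${\rm mul}(B,\gamma)>0\Rightarrow{\rm mul}(B,\gamma^c)>0$). The trade-off: the paper's route needs only the single cited identity and the multiplicativity of $\kappa^2$, so propagation along products is automatic; your route needs the standard but unproved-in-this-paper facts about $F$ under tensor operations (cleanest via $F^\gamma=(id\otimes f_1)(u^\gamma)$ for the Woronowicz character $f_1$, since the paper's normalized $F^\gamma$ is only defined for irreducibles), and your parenthetical claim that these follow from uniqueness of the normalized positive intertwiner is a bit too quick for tensor multiplicativity, though it does handle the subrepresentation step once all factors have $F=id$. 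With those facts properly cited, your proof is complete, and it yields the slightly stronger intermediate statement $F^\lambda=id$ for all $\lambda\in\widehat{\mathcal{G}}$ directly.
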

\begin{proof}
 Note that for all $\gamma\in\widehat{\mathcal{G}}$ and a unitary $u^{\gamma}\in \gamma$,  it follows from ~\cite[Theorem 5.4]{Woronowicz1987} that $(id\otimes\kappa^2)u^{\gamma}=F^{\gamma}u^{\gamma}(F^{\gamma})^{-1}$. By Proposition~\ref{tracial inv states imply tracial haar measure}, we see that  $F^{\gamma}=id$ for all $\gamma\in\widehat{\mathcal{G}}$ such that ${\rm mul}(B,\gamma)>0$. So
 $$\kappa^2(u^{\gamma}_{ij})=u^{\gamma}_{ij}$$ for all $\gamma\in\widehat{\mathcal{G}}$ such that ${\rm mul}(B,\gamma)>0$ and $1\leq i,j\leq d_{\gamma}$. Note that $\kappa^2$ is a linear multiplicative map on $\mathscr{A}$. Hence $\kappa^2$ is the identity map when restricted on the algebra $\mathscr{A}_2'$  generated by $u^{\gamma}_{ij}$'s  for all $\gamma\in\widehat{\mathcal{G}}$ such that ${\rm mul}(B,\gamma)>0$ and $1\leq i,j\leq d_{\gamma}$. If ${\rm mul}(B,\gamma)>0$, then ${\rm mul}(B,\gamma^c)>0$. Note that $\overline{u^{\gamma}}=(u_{ij}^{\gamma*})_{1\leq i,j\leq d_{\gamma}}\in \gamma^c$ for all $\gamma\in\widehat{\mathcal{G}}$. So  $u_{ij}^{\gamma*}\in \mathscr{A}_2'$  for all $\gamma\in\widehat{\mathcal{G}}$ such that ${\rm mul}(B,\gamma)>0$ and $1\leq i,j\leq d_{\gamma}$, and $\mathscr{A}_2'$ is a $*$-algebra. Thus $\mathscr{A}_2'=\mathscr{A}_2$  where $\mathscr{A}_2$ is defined in Proposition~\ref{faithfulness} and is the $*$-algebra generated by $u^{\gamma}_{ij}$ for all $\gamma\in\widehat{\mathcal{G}}$ such that ${\rm mul}(B,\gamma)>0$ and $1\leq i,j\leq d_{\gamma}$.

Note that $\alpha$ is faithful,  hence $\mathscr{A}_2=\mathscr{A}$ by Proposition~\ref{faithfulness}. So $\kappa^2=id$ on $\mathscr{A}$. This completes the proof.
\end{proof}
\begin{remark}\label{rkac}
Theorem~\ref{inv state is tracial haar measure is tracial} includes Theorem 2.10 (i) in ~\cite{BS2012} as special cases.

However, the converse of Theorem~\ref{inv state is tracial haar measure is tracial} is not true.

By~\cite[Theorem 5.1]{Wang1999}, there exists an ergodic and faithful action $\alpha$ of $A_u(n)$ on the Cuntz algebra $\mathcal{O}_n$ by $$\alpha(S_j)=\sum_{i=1}^n S_i\otimes u_{ij}.$$ Although $A_u(n)$ is a Kac algebra, there is no tracial state on $\mathcal{O}_n$.
\end{remark}
\section{Actions on compact Hausdorff spaces}

In this section, we suppose that a compact quantum group $\A$ acts on a compact Hausdorff space $X$  by $\alpha$ and denote $C(X)$ by $B$. Let $ev_x$ be the evaluation functional on $B$ at $x\in X$, i.e., $ev_x(f)=f(x)$ for all $f\in B$.

First, we derive some basic properties of invariant subsets of $X$ and invariant states on $B$. Next we show that the existence of minimal invariant subsets of $X$ and formulate the concept of compact quantum group orbits and use it as a tool to study ergodic compact quantum group actions on compact Hausdorff spaces.

\subsection{Invariant subsets and invariant states}

For a closed subset $Y$ of $X$, let $J_Y=\{f\in B|\, f=0\,{\rm on}\, Y\}$ and $\pi_Y$ be the quotient map from $B$ onto $B/J_Y$. Consider that a compact quantum group $\A$ acts on $X$ by $\alpha$. We say that $Y$ is an $\alpha$-\textbf{invariant subset} of $X$ if  $J_Y$ is an $\alpha$-invariant ideal of $B$.

Define the induced action $\alpha_Y$ of $\A$ on $Y$ by $\alpha_Y(f+J_Y)=(\pi_Y\otimes id)\alpha(f)$ for $f\in B$. For a state $\mu$ on $B$, since $B$ is commutative, $N_{\mu}=\{f\in B|\mu(f^*f)=0\}$ is a two-sided ideal of $B$. Let $X_{\alpha}=\{x\in X|f(x)=0\,\, \text{for all}\, f\in\ker{\alpha}\}$.

We now give another characterization of invariant subsets. First we need the following lemma.

\begin{lemma}\label{lemma for a characterization of invariant subsets}
For a closed subset $Y$ of $X$ and $f\in B$, $(\pi_Y\otimes id)\alpha(f)=0$ if and only if $(ev_x\otimes id)\alpha(f)=\alpha(f)(x)=0$ for all $x$ in $Y$.
\end{lemma}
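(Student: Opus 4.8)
The plan is to exploit the commutativity of $B = C(X)$ and the fact that the quotient $B/J_Y$ is naturally identified with $C(Y)$, under which $\pi_Y$ becomes the restriction map $f \mapsto f|_Y$. Under the identification $(B/J_Y) \otimes A \cong C(Y, A)$ (continuous $A$-valued functions on $Y$, or equivalently, using that $\pi_Y \otimes \mathrm{id}$ is a $*$-homomorphism whose image consists of $A$-valued functions on $Y$), the element $(\pi_Y \otimes \mathrm{id})\alpha(f)$ is precisely the function $Y \ni x \mapsto (ev_x \otimes \mathrm{id})\alpha(f) = \alpha(f)(x) \in A$.

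First I would make the identification $\pi_Y \otimes \mathrm{id}$ concrete: for each $x \in Y$, the evaluation $ev_x$ on $C(Y)$ composed with $\pi_Y$ equals $ev_x$ on $B$, so $(ev_x \otimes \mathrm{id})(\pi_Y \otimes \mathrm{id})\alpha(f) = (ev_x \otimes \mathrm{id})\alpha(f) = \alpha(f)(x)$. For the forward direction, if $(\pi_Y \otimes \mathrm{id})\alpha(f) = 0$, then applying $ev_x \otimes \mathrm{id}$ for any $x \in Y$ gives $\alpha(f)(x) = 0$ immediately. For the converse, suppose $\alpha(f)(x) = 0$ for all $x \in Y$. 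I want to conclude $(\pi_Y \otimes \mathrm{id})\alpha(f) = 0$ in $(B/J_Y) \otimes A = C(Y) \otimes A$. The cleanest way is to note that the states on $C(Y) \otimes A$ of the form $ev_x \otimes \psi$ with $x \in Y$, $\psi \in S(A)$ separate points (this is Lemma~\ref{tensor states} together with the fact that the $ev_x$, $x \in Y$, are weak-$*$ dense in, or at least separate points of, $C(Y)$, since pure states of $C(Y)$ are exactly evaluations). Then for any such $x$ and $\psi$, $(ev_x \otimes \psi)(\pi_Y \otimes \mathrm{id})\alpha(f) = \psi(\alpha(f)(x)) = \psi(0) = 0$, so the element is zero.

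The only mild subtlety — and the step I would be most careful about — is the justification that states $ev_x \otimes \psi$, $x\in Y$, $\psi \in S(A)$, separate points of $C(Y)\otimes A$; this follows because $\{ev_x : x \in Y\}$ is exactly the set of pure states (characters) of $C(Y)$ and hence separates points of $C(Y)$, and then Lemma~\ref{tensor states} (or its proof via $C(Y)' \odot A'$ being weak-$*$ dense) upgrades this to separation of points of the tensor product. Alternatively, and perhaps more directly, one identifies $C(Y) \otimes A$ with $C(Y,A)$, the algebra of $A$-valued continuous functions on $Y$, under which $(\pi_Y \otimes \mathrm{id})\alpha(f)$ is literally the function $x \mapsto \alpha(f)(x)$, and a continuous function vanishing pointwise is the zero function — this is the most transparent route and I would likely present it this way, invoking the standard identification $C(Y) \otimes A \cong C(Y,A)$. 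Either way the argument is short; there is no real obstacle, just the bookkeeping of which identification to use.
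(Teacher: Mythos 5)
Your proposal is correct and follows essentially the same route as the paper: the forward direction applies the evaluation functional $\widetilde{ev_x}$ on $B/J_Y$ (with $\widetilde{ev_x}\pi_Y=ev_x$), and the converse uses the identification $(B/J_Y)\otimes A\cong C(Y)\otimes A\cong C(Y,A)$, under which an $A$-valued continuous function on $Y$ vanishing at every point is zero — exactly the argument you say you would present. The alternative route via separating states $ev_x\otimes\psi$ is a harmless variant but unnecessary given the $C(Y,A)$ identification.
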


\begin{proof}
Suppose that $(\pi_Y\otimes id)\alpha(f)=0$. For any $x$ in $Y$, we define a linear functional $\widetilde{ev_x}$ on $B/J_Y$ by $\widetilde{ev_x}(f+J_Y)=f(x)$ for all $f\in B$. If $f\in J_Y$, then $f(x)=0$ for all $x$ in $Y$. Hence  $\widetilde{ev_x}$ is well-defined. Furthermore, $\widetilde{ev_x}\pi_Y=ev_x$. Applying $\widetilde{ev_x}\otimes id$ to both sides of $(\pi_Y\otimes id)\alpha(f)=0$, we get $(ev_x\otimes id)\alpha(f)=0$ for all $x$ in $Y$.

On the other hand, for all $x$ in $Y$ and some $f\in B$, if $(ev_x\otimes id)\alpha(f)=0$, then $(\widetilde{ev_x}\pi_Y\otimes id)\alpha(f)=0$. Note that $(\pi_Y\otimes id)\alpha(f)\in (B/J_Y)\otimes A\cong C(Y)\otimes A\cong C(Y,A)$. Hence for all $x\in Y$, if $(\widetilde{ev_x}\otimes id)(\pi_Y\otimes id)\alpha(f)=0$, then $(\pi_Y\otimes id)\alpha(f)=0$.
\end{proof}

Using Lemma~\ref{lemma for a characterization of invariant subsets}, we have the following.

\begin{proposition}\label{characterization2 of invariant subset}
A closed subset $Y$ of $X$ is $\alpha$-invariant if and only if $(ev_x\otimes id)\alpha(f)=0$ for all $x$ in $Y$ and $f$ in $J_Y$.
\end{proposition}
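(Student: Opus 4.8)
The plan is to obtain this as an immediate consequence of Lemma~\ref{lemma for a characterization of invariant subsets} together with the definition of an $\alpha$-invariant subset. Recall that $Y$ is declared $\alpha$-invariant precisely when the ideal $J_Y$ is an $\alpha$-invariant ideal of $B$, which by definition means $(\pi_Y\otimes id)\alpha(f)=0$ for every $f\in J_Y$. Thus the assertion to be proved is exactly the equivalence of the two conditions ``$(\pi_Y\otimes id)\alpha(f)=0$ for all $f\in J_Y$'' and ``$(ev_x\otimes id)\alpha(f)=0$ for all $x\in Y$ and all $f\in J_Y$''.

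First I would fix an arbitrary $f\in J_Y$ and invoke Lemma~\ref{lemma for a characterization of invariant subsets}, which gives that $(\pi_Y\otimes id)\alpha(f)=0$ holds if and only if $(ev_x\otimes id)\alpha(f)=0$ for all $x\in Y$. Quantifying this equivalence over all $f\in J_Y$ produces the claimed biconditional, so the proof consists of nothing more than this bookkeeping step.

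Consequently there is no genuine obstacle: the real content — namely that the quotient map $\pi_Y$ corresponds under $(B/J_Y)\otimes A\cong C(Y,A)$ to pointwise evaluation, and that an $A$-valued continuous function on $Y$ which vanishes at every point of $Y$ is zero — has already been isolated in Lemma~\ref{lemma for a characterization of invariant subsets}. The present proposition is merely the pointwise restatement of invariance of $J_Y$, repackaged in a form convenient for the later study of orbits and minimal invariant subsets.
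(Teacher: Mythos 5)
Your proposal is correct and coincides with the paper's own derivation: the paper also obtains the proposition immediately by combining the definition of an $\alpha$-invariant subset (invariance of the ideal $J_Y$) with Lemma~\ref{lemma for a characterization of invariant subsets}, quantified over all $f\in J_Y$. Nothing further is needed.
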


Next we obtain some properties of $X_{\alpha}$.

\begin{proposition}\label{Xalpha is invariant and infinite}
The following hold:
\begin{enumerate}
\item The closed subset $X_{\alpha}$  of $X$ is $\alpha$-invariant.
\item If $X$ contains infinitely many points, then $X_{\alpha}$ also contains infinitely many points.
\end{enumerate}
\end{proposition}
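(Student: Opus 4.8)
The plan is to reduce both parts to two standard facts: the Gelfand correspondence between closed two-sided ideals of $C(X)$ and closed subsets of $X$, and the injectivity of $\alpha$ on the Podl\'{e}s algebra $\mathscr{B}$ coming from the identity $(id\otimes\varepsilon)\alpha|_{\mathscr{B}}=id_{\mathscr{B}}$.

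For part (1), I would first note that $\ker\alpha$ is a closed two-sided ideal of $B=C(X)$, and that $X_{\alpha}=\{x\in X\mid f(x)=0\text{ for all }f\in\ker\alpha\}$ is exactly the hull of $\ker\alpha$; hence, by the Gelfand correspondence for commutative $C^*$-algebras, $J_{X_{\alpha}}=\ker\alpha$. Then for every $f\in J_{X_{\alpha}}$ we have $\alpha(f)=0$, so $(\pi_{X_{\alpha}}\otimes id)\alpha(f)=0$ trivially; that is, $J_{X_{\alpha}}$ is an $\alpha$-invariant ideal of $B$, which by definition means $X_{\alpha}$ is an $\alpha$-invariant subset of $X$. (Here $X_{\alpha}$ is indeed a closed subset, being an intersection of zero sets, and it is nonempty since $\alpha$ is unital and so $\ker\alpha\neq B$.)

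For part (2), the key point is that $\alpha$ is injective on $\mathscr{B}$: since $\alpha$ maps $\mathscr{B}$ into $\mathscr{B}\odot\mathscr{A}$ and $\varepsilon$ is defined on all of $\mathscr{A}$, the relation $(id\otimes\varepsilon)\alpha|_{\mathscr{B}}=id_{\mathscr{B}}$ shows that $\alpha(b)=0$ forces $b=(id\otimes\varepsilon)\alpha(b)=0$ for $b\in\mathscr{B}$. Consequently $\mathscr{B}\cap\ker\alpha=\{0\}$, so the composite $\mathscr{B}\hookrightarrow B\twoheadrightarrow B/\ker\alpha$ is injective; since $B/\ker\alpha=C(X)/J_{X_{\alpha}}\cong C(X_{\alpha})$, we get $\dim C(X_{\alpha})\geq\dim\mathscr{B}$. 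Now if $X$ is infinite then $B=C(X)$ is infinite-dimensional; as $\mathscr{B}$ is dense in $B$, it cannot be finite-dimensional (a finite-dimensional subspace of a normed space is closed, hence would equal $B$), so $\dim\mathscr{B}=\infty$. Therefore $\dim C(X_{\alpha})=\infty$, which forces $X_{\alpha}$ to have infinitely many points, as desired.

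There is no serious obstacle in this argument; the only places that require a word of care are the identification $\ker\alpha=J_{X_{\alpha}}$, which is immediate from commutativity of $B$, and the legitimacy of $(id\otimes\varepsilon)\alpha(b)$ for $b\in\mathscr{B}$ --- even though $\varepsilon$ is only densely defined on $A$, it is defined on the whole of $\mathscr{A}$, which contains $\alpha(\mathscr{B})$, so the injectivity argument on $\mathscr{B}$ is unambiguous.
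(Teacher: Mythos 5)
Your proof is correct and follows essentially the same route as the paper: part (1) reduces to the identification $\ker\alpha=J_{X_{\alpha}}$ (so invariance of $J_{X_\alpha}$ is trivial), and part (2) rests on the injectivity of $\alpha$ on $\mathscr{B}$ via $(id\otimes\varepsilon)\alpha|_{\mathscr{B}}=id_{\mathscr{B}}$ together with the infinite-dimensionality of $\mathscr{B}$ forced by its density in $B$. The only cosmetic difference is that you argue directly with $\dim C(X_\alpha)\geq\dim\mathscr{B}$ whereas the paper phrases the same argument as a proof by contradiction.
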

\begin{proof}
(1)The invariance of $X_{\alpha}$ follows from Proposition~\ref{characterization2 of invariant subset}.

(2)Suppose that $X_{\alpha}$ has finitely many points. Thus $\alpha(B)\cong B/\ker{\alpha}\cong C(X_{\alpha})$ is finite dimensional.  Let $\varepsilon$ be the counit of $\mathcal{G}$ and  $\mathscr{B}$ be the Podle\'{s} subalgebra of $B$. Then $\alpha(\mathscr{B})\subseteq\alpha(B)$ is also finite dimensional. Since $(id\otimes\varepsilon)\alpha|_{\mathscr{B}}=id_{\mathscr{B}}$, we have that $\alpha$ is injective on $\mathscr{B}$. Hence $\mathscr{B}$ is  finite dimensional. This is a contradiction to that $B$ is infinite dimensional and that $\mathscr{B}$ is dense in $B$.
\end{proof}

Denote the quotient space of $X$ corresponding to $B^{\alpha}$ by $Y_{\alpha}$. It follows that $Y^{\alpha}=X/\sim$, and for $x,y\in X$, one have that $x\sim y$ if and only if $(ev_x\otimes h)\alpha=(ev_y\otimes h)\alpha$.

For  an $\alpha$-invariant subset $Y$ of $X$, let $\pi_Y: B\to B/J_Y$ be the quotient map. Recall that the induced action $\alpha_Y$ on $B/J_Y$ is defined by $\alpha_Y(f+J_Y)=(\pi_Y\otimes id)\alpha(f+J_Y)$ for all $f\in B$.

\begin{lemma}\label{invariance of pull back}
Let $Y$  be an $\alpha$-invariant subset of $X$. For an $\alpha_Y$-invariant state $\mu$ on $B/J_Y$, the pullback $\mu_Y:=\mu\circ\pi_Y$ is an $\alpha$-invariant state on $B$ and ${\rm supp}\mu_Y={\rm supp}\mu$.
\end{lemma}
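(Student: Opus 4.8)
The plan is to verify the two assertions in order: first that $\mu_Y = \mu\circ\pi_Y$ is a state on $B$, then that it is $\alpha$-invariant, and finally that its support coincides with that of $\mu$. The first point is immediate: $\pi_Y$ is a unital $*$-homomorphism, so $\mu_Y$ is positive, and $\mu_Y(1_B) = \mu(\pi_Y(1_B)) = \mu(1_{B/J_Y}) = 1$; hence $\mu_Y \in S(B)$.

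For the invariance, I would simply chase the defining identity through the quotient map. Since $Y$ is $\alpha$-invariant, $\alpha$ descends to the induced action $\alpha_Y$ on $B/J_Y$, and by construction $\alpha_Y \pi_Y = (\pi_Y \otimes id)\alpha$ as maps $B \to (B/J_Y)\otimes A$. Now for $f \in B$, using $\alpha_Y$-invariance of $\mu$,
\begin{align*}
(\mu_Y \otimes id)\alpha(f) &= (\mu\circ\pi_Y \otimes id)\alpha(f) = (\mu\otimes id)\bigl((\pi_Y\otimes id)\alpha(f)\bigr)\\
&= (\mu\otimes id)\alpha_Y(\pi_Y(f)) = \mu(\pi_Y(f))\,1_A = \mu_Y(f)\,1_A.
\end{align*}
Thus $\mu_Y \in {\rm Inv}(B)$.

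Finally, for the support statement, recall that $B = C(X)$ is commutative, so ${\rm supp}\,\mu_Y$ is the smallest closed subset of $X$ off of which $\mu_Y$ vanishes, i.e.\ $N_{\mu_Y} = J_{{\rm supp}\,\mu_Y}$; similarly ${\rm supp}\,\mu$ is a closed subset of $Y$ with $N_\mu = J^Y_{{\rm supp}\,\mu}$ inside $C(Y) = B/J_Y$. I would argue that ${\rm supp}\,\mu_Y \subseteq Y$: if $g \in J_Y$ then $\pi_Y(g) = 0$, so $\mu_Y(g^*g) = \mu(0) = 0$, meaning $g \in N_{\mu_Y}$; hence $J_Y \subseteq N_{\mu_Y} = J_{{\rm supp}\,\mu_Y}$, which forces ${\rm supp}\,\mu_Y \subseteq Y$. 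Then for $f \in B$, $\mu_Y(f^*f) = \mu(\pi_Y(f)^*\pi_Y(f))$ vanishes iff $\pi_Y(f)$ vanishes on ${\rm supp}\,\mu$ (viewed as a subset of $Y$), i.e.\ iff $f$ vanishes on ${\rm supp}\,\mu \subseteq Y \subseteq X$. This identifies $N_{\mu_Y}$ with the functions vanishing on ${\rm supp}\,\mu$, so ${\rm supp}\,\mu_Y = {\rm supp}\,\mu$ as closed subsets of $X$.

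The whole argument is essentially routine diagram-chasing; the only point requiring a little care is the bookkeeping between ideals in $B$ and ideals in the quotient $B/J_Y \cong C(Y)$ — specifically, keeping straight that ${\rm supp}\,\mu$ is computed inside $Y$ but, via the inclusion $Y \hookrightarrow X$, names the same closed set as ${\rm supp}\,\mu_Y$ computed inside $X$. No genuine obstacle is expected here.
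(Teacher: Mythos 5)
Your proof is correct and follows essentially the same route as the paper, whose own argument simply observes that invariance follows from the definitions and that $\mu_Y(f^*f)=0$ if and only if $\mu\bigl((f+J_Y)^*(f+J_Y)\bigr)=0$. You have merely written out explicitly the routine verifications (the identity $\alpha_Y\pi_Y=(\pi_Y\otimes id)\alpha$ and the bookkeeping identifying ${\rm supp}\,\mu\subseteq Y$ with ${\rm supp}\,\mu_Y\subseteq X$), which is fine.
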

\begin{proof}
The first assertion follows from the definition of invariant states.

For any  $f$ in $B$ , we have that $\mu_Y(f^*f)=0$ if and only if $\mu((f+J_Y)^*(f+J_Y))=0$. This says that the support of $\mu_Y$ is the same as $\mu$.
\end{proof}

\begin{proposition}\label{support of invariant measure is invariant}
Suppose that $\mu$ is an $\alpha$-invariant state on $B$. The support of $\mu$, denoted by ${\rm supp}\mu$, is an $\alpha$-invariant subset of $X$.
\end{proposition}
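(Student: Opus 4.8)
The plan is to connect the support of $\mu$ to the kernel of its GNS representation and then invoke Theorem~\ref{invariant support}, which already does the heavy lifting. Recall that since $B = C(X)$ is commutative, the GNS ideal $I_\mu = \ker\Phi_\mu$ coincides with $N_\mu = \{f \in B \mid \mu(f^*f) = 0\}$, and this is precisely the ideal $J_{{\rm supp}\mu}$ of functions vanishing on ${\rm supp}\mu$. So saying ``${\rm supp}\mu$ is an $\alpha$-invariant subset of $X$'' is by definition saying ``$J_{{\rm supp}\mu} = I_\mu$ is an $\alpha$-invariant ideal of $B$''.

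First I would record the identification $I_\mu = N_\mu = J_{{\rm supp}\mu}$ in the commutative case; the first equality is noted in the excerpt right before Theorem~\ref{invariant support} (and more generally in the discussion of $I_\mu$), and the second is the standard correspondence between closed subsets of $X$ and ideals of $C(X)$ — the support of $\mu$ is exactly the complement of the largest open set on which $\mu$ vanishes, so $f \in N_\mu$ iff $f|_{{\rm supp}\mu} = 0$. Then I would apply Theorem~\ref{invariant support} directly: it asserts that for any $\alpha$-invariant state $\mu$ on any unital $C^*$-algebra $B$, the ideal $I_\mu$ is $\alpha$-invariant (and the induced action $\alpha_\mu$ is injective). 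Combining this with the identification gives that $J_{{\rm supp}\mu}$ is $\alpha$-invariant, which is exactly the statement that ${\rm supp}\mu$ is an $\alpha$-invariant subset of $X$.

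Since the substantive content is already packaged in Theorem~\ref{invariant support} (whose proof goes through Lemma~\ref{lemma of invariant support}, i.e. the unitary $U(b \otimes a) = \alpha(b)(1 \otimes a)$ on the Hilbert module $H_\mu \otimes A$ and the conjugation $\beta(T) = U(T \otimes 1)U^*$), there is essentially no obstacle here — the proof is a one-line deduction once the dictionary between supports and ideals is spelled out. The only point requiring a moment of care is verifying that $N_\mu$ really equals the vanishing ideal of ${\rm supp}\mu$ rather than something smaller; this is immediate from the definition of the support of a regular Borel measure on a compact Hausdorff space together with the Riesz representation identification of $\mu$ with such a measure, and from the fact that $\mu(f^*f) = \int_X |f|^2 \, d\mu$. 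I would therefore present the proof as: note $B$ is commutative so $I_\mu = N_\mu$, identify $N_\mu$ with $J_{{\rm supp}\mu}$, and conclude $\alpha$-invariance of ${\rm supp}\mu$ from Theorem~\ref{invariant support}.
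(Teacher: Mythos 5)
Your proposal is correct and follows exactly the paper's own route: the paper proves this proposition by citing Theorem~\ref{invariant support} together with the identification $I_\mu=N_\mu=\{f\in B\mid f|_{{\rm supp}\mu}=0\}$ in the commutative case (spelled out before Theorem~\ref{invariant support} and in Example~\ref{ex:Inv sub}(2)). Nothing further is needed.
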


\begin{proof}
This follows from Theorem~\ref{invariant support}. See Example~\ref{ex:Inv sub}(2) for explanations.
\end{proof}

\subsection{Minimal invariant subsets and orbits}

In this section, we define compact quantum group orbits, derive some basic properties of it, and describe minimal invariant subsets. At last, we show that under  actions of coamenable compact quantum groups, orbits are minimal invariant subsets.

\begin{definition}\label{orbit}
Let $\mathcal{G}$ act on $X$ by $\alpha$. For $x\in X$, denote the  $*$-homomorphism $(ev_x\otimes id)\alpha: B \to A$  by $\alpha_x$, and let
$$\mathcal{M}_x=\{y\in X|f(y)=0\, \,{\rm for\, all}\,\, f\in \ker{\alpha_x}\}.$$  We call the subset
$$\{x'\in X|(ev_x\otimes h)\alpha=(ev_{x'}\otimes h)\alpha\}$$ of $X$
the \textbf{orbit} of  $x$, and denote it by ${\rm Orb}_x$.
\end{definition}

Next we show that $\mathcal{M}_x$ is an invariant subset.

\begin{lemma}\label{Mx is nonepmty and closed}
For any $x\in X$, the set $\mathcal{M}_x$ is a nonempty closed subset of $X$.
\end{lemma}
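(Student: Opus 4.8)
The plan is to show that $\mathcal{M}_x$ is nonempty and closed by direct inspection of the definition. For closedness, note that $\mathcal{M}_x = \{y \in X \mid f(y) = 0 \text{ for all } f \in \ker\alpha_x\} = \bigcap_{f \in \ker\alpha_x} f^{-1}(\{0\})$. Each set $f^{-1}(\{0\})$ is closed since $f$ is continuous, so $\mathcal{M}_x$ is an intersection of closed sets, hence closed. Equivalently, one can observe that $\ker\alpha_x$ is a closed two-sided ideal of $B = C(X)$ (being the kernel of a $*$-homomorphism), so it is of the form $J_Z$ for a unique closed subset $Z$ of $X$, and $\mathcal{M}_x = Z$ by the Gelfand correspondence between closed subsets of $X$ and closed ideals of $C(X)$.

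For nonemptiness, I would argue by contradiction: if $\mathcal{M}_x = \emptyset$, then for every $y \in X$ there is some $f \in \ker\alpha_x$ with $f(y) \neq 0$; using compactness of $X$ and a partition-of-unity/sum-of-squares argument, this forces $\ker\alpha_x$ to contain a strictly positive function, hence the unit $1_B$, so $\ker\alpha_x = B$. But $\alpha_x(1_B) = (ev_x \otimes id)\alpha(1_B) = (ev_x \otimes id)(1_B \otimes 1_A) = 1_A \neq 0$, since $\alpha$ is a unital $*$-homomorphism. This contradiction shows $\mathcal{M}_x \neq \emptyset$. Phrased via the ideal picture: $\ker\alpha_x$ is a \emph{proper} ideal of $C(X)$ (proper because $\alpha_x$ is unital, hence nonzero), and a proper closed ideal of $C(X)$ corresponds to a nonempty closed subset of $X$, namely $\mathcal{M}_x$, which is precisely its hull.

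I do not expect a genuine obstacle here; the only point requiring a little care is to record why $\ker\alpha_x$ is proper, which is exactly the unitality of $\alpha$ and of $ev_x$ — and that is what rules out the degenerate case $\mathcal{M}_x = \emptyset$. The closedness is immediate from continuity of the functions in $\ker\alpha_x$.
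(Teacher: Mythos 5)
Your proof is correct and is essentially the paper's argument spelled out in detail: the paper simply notes that $\alpha_x\neq 0$ (indeed unital), so $\ker\alpha_x$ is a proper closed ideal of $C(X)$ whose hull $\mathcal{M}_x$ is nonempty and closed by the Gelfand correspondence, which is exactly the content of your ideal-theoretic phrasing and the partition-of-unity justification.
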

\begin{proof}
Note that $\alpha_x\neq 0$, thus $\mathcal{M}_x$ is a nonempty closed subset of $X$.
\end{proof}

For every $x\in X$, use $A_x$ to denote the C*-subalgebra $\alpha_x(B)$ of $A$. Now $A_x$ is isomorphic to $B/\ker{\alpha_x}$. We denote this isomorphism by $\widetilde{\alpha_x}:B/\ker{\alpha_x}\to A_x$ and the quotient map from $B$ onto $B/\ker{\alpha_x}$ by $\pi_x$. We get
\begin{equation} \label{eq:alphax}
\widetilde{\alpha_x}\pi_x=\alpha_x.
\end{equation}

A different form of the following lemma is proven in~\cite[Thoerem 3.2]{Goswami2010} by Goswami. We write down a proof here in detail based on his argument for completeness.

\begin{lemma} \label{prelimilary lemma for minimality of mx}
Suppose that $\mathcal{G}$ acts on a compact Hausdorff space $X$ by $\alpha$. For every $x\in X$, the coproduct $\Delta$ maps $A_x$ into $A_x\otimes A$. Moreover, we have that
\begin{equation} \label{eq:identity for minimality of mx}
(\pi_x\otimes id)\alpha=(\widetilde{\alpha_x}^{-1}\otimes id)\Delta\alpha_x.
\end{equation}
\end{lemma}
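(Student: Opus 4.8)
The plan is to verify the claim on the Podleś algebra $\mathscr{B}$ first, where everything is algebraic, and then extend by density. First I would recall that $\alpha|_{\mathscr{B}}:\mathscr{B}\to\mathscr{B}\odot\mathscr{A}$, so $\alpha_x=(ev_x\otimes id)\alpha$ maps $\mathscr{B}$ into $\mathscr{A}$, and its image $\alpha_x(\mathscr{B})$ is spanned by finitely many matrix coefficients $u^\gamma_{ij}$ (those $\gamma$ with ${\rm mul}(B,\gamma)>0$, paired against the basis vectors $e_{\gamma ki}$). Since $\Delta(u^\gamma_{ij})=\sum_m u^\gamma_{im}\otimes u^\gamma_{mj}\in \mathscr{A}\odot\mathscr{A}$, the key point is that $\Delta$ sends each $u^\gamma_{im}$ appearing in $\alpha_x(\mathscr{B})$ to a sum of elements $u^\gamma_{im}\otimes u^\gamma_{mj}$ whose left legs $u^\gamma_{im}$ again lie in $\alpha_x(\mathscr{B})$. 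Concretely, $\alpha_x(e_{\gamma ki})=\sum_j ev_x(e_{\gamma kj})\,u^\gamma_{ji}$, so as $k,i$ range over the allowed index set the elements $\alpha_x(e_{\gamma ki})$ span a $\Delta$-coinvariant subspace; this gives $\Delta(\alpha_x(\mathscr{B}))\subseteq \alpha_x(\mathscr{B})\odot\mathscr{A}$, and taking closures yields $\Delta(A_x)\subseteq A_x\otimes A$.

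For the identity~\eqref{eq:identity for minimality of mx}, I would again work on $\mathscr{B}$ and use the defining property $(\alpha\otimes id)\alpha=(id\otimes\Delta)\alpha$. Apply $ev_x\otimes id\otimes id$ to both sides: the right side becomes $(ev_x\otimes id\otimes id)(id\otimes\Delta)\alpha = (id\otimes\Delta)(ev_x\otimes id)\alpha = \Delta\alpha_x$ (as maps $\mathscr B\to\mathscr A\odot\mathscr A$, using that $\Delta(A_x)\subseteq A_x\otimes A$ from the first part). The left side is $(ev_x\otimes id\otimes id)(\alpha\otimes id)\alpha = (\alpha_x\otimes id)\alpha$. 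So $(\alpha_x\otimes id)\alpha = \Delta\alpha_x$ on $\mathscr B$, hence on all of $B$ by continuity. Now factor $\alpha_x=\widetilde{\alpha_x}\pi_x$ from~\eqref{eq:alphax}: the left side is $(\widetilde{\alpha_x}\otimes id)(\pi_x\otimes id)\alpha$, and since $\widetilde{\alpha_x}$ is an isomorphism, $\widetilde{\alpha_x}\otimes id$ is injective (it's $*$-isomorphism onto $A_x\otimes A$), so composing with $\widetilde{\alpha_x}^{-1}\otimes id$ gives $(\pi_x\otimes id)\alpha = (\widetilde{\alpha_x}^{-1}\otimes id)\Delta\alpha_x$, as desired.

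The main obstacle I anticipate is a bookkeeping one: being careful about where $\Delta$ is actually defined and bounded. The coproduct $\Delta$ is a priori a $*$-homomorphism $A\to A\otimes A$, and restricting it to the C$^*$-subalgebra $A_x$ requires knowing $\Delta(A_x)\subseteq A_x\otimes A$ so that $\Delta|_{A_x}$ makes sense as a map into $A_x\otimes A$ — this is exactly the first assertion, and it is crucial that it be established before one can even write $\Delta\alpha_x$ with the intended codomain in~\eqref{eq:identity for minimality of mx}. One must also check that $\widetilde{\alpha_x}^{-1}\otimes id$ is a well-defined bounded (indeed isometric) map $A_x\otimes A\to (B/\ker\alpha_x)\otimes A$, which follows from functoriality of the minimal tensor product applied to the $*$-isomorphism $\widetilde{\alpha_x}^{-1}$. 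Apart from this care with domains and the density argument ($\mathscr B$ dense in $B$, $\mathscr A$ dense in $A$, and all maps in sight norm-continuous), the proof is a direct manipulation of the coassociativity-type identities.
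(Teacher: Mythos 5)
Your proposal is correct, and its decisive step is exactly the paper's: apply $ev_x\otimes id\otimes id$ to the coassociativity identity $(\alpha\otimes id)\alpha=(id\otimes\Delta)\alpha$ to get $\Delta\alpha_x=(\alpha_x\otimes id)\alpha$, then factor $\alpha_x=\widetilde{\alpha_x}\pi_x$ and compose with $\widetilde{\alpha_x}^{-1}\otimes id$. The only divergence is your separate treatment of the containment $\Delta(A_x)\subseteq A_x\otimes A$ via the Podle\'{s} algebra: the computation $\Delta(\alpha_x(e_{\gamma ki}))=\sum_m\alpha_x(e_{\gamma km})\otimes u^{\gamma}_{mi}$ is fine (though your preliminary phrasing about individual matrix coefficients $u^{\gamma}_{im}$ lying in $\alpha_x(\mathscr{B})$ is not quite right --- only those specific linear combinations are), but it is redundant: the identity $\Delta\alpha_x=(\alpha_x\otimes id)\alpha$, which you prove anyway and which holds on all of $B$ with no density argument since every map involved is a norm-continuous map defined on $B$, already shows $\Delta(\alpha_x(f))=(\alpha_x\otimes id)\alpha(f)\in A_x\otimes A$, which is how the paper gets the first assertion in one line. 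So the detour through $\mathscr{B}$ and the ``extend by density'' steps can be dropped entirely; everything else, including the observation that $\widetilde{\alpha_x}^{-1}\otimes id$ makes sense on $A_x\otimes A$ by functoriality of the minimal tensor product, matches the paper.
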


\begin{proof}
We first show that $\Delta$ maps $A_x$ into $A_x\otimes A$. For every $f\in B$, we have that
\begin{align*}
\Delta((ev_x\otimes id)\alpha(f))&=(ev_x\otimes id\otimes id)(id\otimes\Delta)\alpha(f)\\
                                 &=(ev_x\otimes id\otimes id)(\alpha\otimes id)\alpha(f)\\
                                 &=(\alpha_x\otimes id)\alpha(f).
\end{align*}
Therefore, we obtain that $\Delta(A_x)\subseteq A_x\otimes A$ which guarantees that the right hand side of equation~\eqref{eq:identity for minimality of mx} is well defined.

Secondly, for every $f\in B$, we get
\begin{align*}
&(\widetilde{\alpha_x}^{-1}\otimes id)\Delta\alpha_x(f)=(\widetilde{\alpha_x}^{-1}\otimes id)(\alpha_x\otimes id)\alpha(f)=(\pi_x\otimes id)\alpha(f).
\end{align*}
\end{proof}

\begin{proposition}\label{mx is invariant}
For every $x\in X$, the set $\mathcal{M}_x$ is an $\alpha$-invariant subset of $X$.
\end{proposition}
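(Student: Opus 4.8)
The plan is to reduce the assertion to the intertwining identity already supplied by Lemma~\ref{prelimilary lemma for minimality of mx}. First I would take care of the book-keeping. Since $B=C(X)$ is commutative, closed ideals of $B$ are in bijection with closed subsets of $X$ via the hull map, and the hull of the closed ideal $\ker\alpha_x$ is, by the very definition of $\mathcal{M}_x$, the set $\mathcal{M}_x$ itself. Hence $J_{\mathcal{M}_x}=\ker\alpha_x$, and the quotient map $\pi_{J_{\mathcal{M}_x}}\colon B\to B/J_{\mathcal{M}_x}$ is exactly the map $\pi_x\colon B\to B/\ker\alpha_x$ introduced before the proposition. So, unwinding the definitions, showing that $\mathcal{M}_x$ is an $\alpha$-invariant subset amounts precisely to showing that $(\pi_x\otimes id)\alpha(f)=0$ for every $f\in\ker\alpha_x$.

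Next I would invoke Lemma~\ref{prelimilary lemma for minimality of mx}. Its conclusion gives $\Delta(A_x)\subseteq A_x\otimes A$, so the right-hand side of~\eqref{eq:identity for minimality of mx} is well defined, and it gives the identity $(\pi_x\otimes id)\alpha=(\widetilde{\alpha_x}^{-1}\otimes id)\Delta\alpha_x$. Now for $f\in\ker\alpha_x$ we have $\alpha_x(f)=0$, hence $\Delta\alpha_x(f)=0$, hence $(\widetilde{\alpha_x}^{-1}\otimes id)\Delta\alpha_x(f)=0$, and therefore $(\pi_x\otimes id)\alpha(f)=0$. This is exactly the invariance of the ideal $\ker\alpha_x=J_{\mathcal{M}_x}$, i.e., $\mathcal{M}_x$ is an $\alpha$-invariant subset of $X$. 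Nonemptiness and closedness of $\mathcal{M}_x$ are already recorded in Lemma~\ref{Mx is nonepmty and closed}, so nothing further is needed.

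I do not expect a genuine obstacle in this proof: all of the real content has been front-loaded into Lemma~\ref{prelimilary lemma for minimality of mx}, where coassociativity $(\alpha\otimes id)\alpha=(id\otimes\Delta)\alpha$ is used both to push $\Delta$ into $A_x\otimes A$ and to produce the intertwining identity. The only step in the present argument that calls for a moment's attention is the identification $J_{\mathcal{M}_x}=\ker\alpha_x$, which rests on the hull–kernel correspondence for commutative C*-algebras; once that is in place, the invariance is a one-line diagram chase applying $\Delta$ to $0$.
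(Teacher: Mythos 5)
Your proof is correct and is essentially the paper's own argument: the paper likewise reduces invariance of $\mathcal{M}_x$ to showing $(\pi_x\otimes id)\alpha(f)=0$ for $f\in\ker\alpha_x$ and then cites equation~\eqref{eq:identity for minimality of mx}. Your only addition is to spell out the hull--kernel identification $J_{\mathcal{M}_x}=\ker\alpha_x$, which the paper leaves implicit.
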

\begin{proof}
It suffices to show that $(\pi_x\otimes id)\alpha(f)=0$ for every $f\in \ker\alpha_x$, which follows from equation~\eqref{eq:identity for minimality of mx}.
\end{proof}

\begin{proposition} \label{minimal invariant space is mx}
For an $\alpha$-invariant subset $Y$ of $X$, $\mathcal{M}_x\subseteq Y$ for every $x\in Y$. If $Y$ is a minimal $\alpha$-invariant subset of $X$, then $Y=\mathcal{M}_x$ for every $x\in Y$.
\end{proposition}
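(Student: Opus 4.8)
The plan is to prove the two assertions in sequence, both relying on the characterization of invariant subsets via evaluation functionals (Proposition~\ref{characterization2 of invariant subset}) together with the structural identity~\eqref{eq:identity for minimality of mx} from Lemma~\ref{prelimilary lemma for minimality of mx}. First I would show $\mathcal{M}_x\subseteq Y$ whenever $x\in Y$ and $Y$ is $\alpha$-invariant. Fix $x\in Y$ and let $f\in J_Y$, so $f$ vanishes on $Y$; in particular $ev_x(f)=0$. Since $Y$ is invariant, Lemma~\ref{lemma for a characterization of invariant subsets} gives $(ev_x\otimes id)\alpha(f)=\alpha_x(f)=0$ for every $x\in Y$. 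Hence $J_Y\subseteq\ker\alpha_x$, and passing to the vanishing loci reverses the inclusion: $\mathcal{M}_x=\{y\in X\mid g(y)=0\ \forall g\in\ker\alpha_x\}\subseteq\{y\in X\mid g(y)=0\ \forall g\in J_Y\}=Y$, using that $Y$ is closed so it equals the zero set of $J_Y$.

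For the second assertion, suppose $Y$ is a \emph{minimal} $\alpha$-invariant subset and fix any $x\in Y$. By Proposition~\ref{mx is invariant}, $\mathcal{M}_x$ is an $\alpha$-invariant subset of $X$; by Lemma~\ref{Mx is nonepmty and closed} it is nonempty and closed; and by the first part of the present proposition $\mathcal{M}_x\subseteq Y$. A nonempty invariant subset contained in a minimal invariant subset must equal it, so $Y=\mathcal{M}_x$.

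There is one point that needs care: ``minimal $\alpha$-invariant subset'' should be understood as minimal among \emph{nonempty closed} $\alpha$-invariant subsets (equivalently, $J_Y$ is a maximal proper $\alpha$-invariant ideal), and one must check that $\mathcal{M}_x$ genuinely qualifies as a competitor in that partial order — i.e. that it is nonempty (Lemma~\ref{Mx is nonepmty and closed}, since $\alpha_x\neq 0$ as $\alpha$ is unital), closed, invariant (Proposition~\ref{mx is invariant}), and a subset of $Y$. Once these four facts are in hand the minimality of $Y$ forces $\mathcal{M}_x=Y$ directly. I expect the only genuine subtlety — and the step I would state most carefully — to be the first one: the passage from the \emph{ideal} inclusion $J_Y\subseteq\ker\alpha_x$ (which comes out of the invariance of $Y$ applied at the point $x\in Y$) to the \emph{set} inclusion $\mathcal{M}_x\subseteq Y$, which uses the Galois-type correspondence between closed subsets of $X$ and (closed) ideals of $C(X)$ and the fact that $Y$ being closed is the zero set of $J_Y$.
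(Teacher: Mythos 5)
Your proposal is correct and follows essentially the same route as the paper: invariance of $Y$ via the evaluation-functional characterization gives $J_Y\subseteq\ker\alpha_x$ for $x\in Y$, hence $\mathcal{M}_x\subseteq Y$ by the closed-set/ideal correspondence, and minimality of $Y$ together with the invariance (and nonemptiness) of $\mathcal{M}_x$ forces equality. The extra care you take about $\mathcal{M}_x$ being a nonempty closed competitor and about $Y$ being the zero set of $J_Y$ is exactly the content the paper leaves implicit, so there is no gap.
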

\begin{proof}
Since $Y$ is $\alpha$-invariant, by Proposition~\ref{characterization2 of invariant subset}, we have that $(ev_x\otimes id)\alpha(f)=0$ for all $f\in J_Y$ and $x\in Y$, which is to say, if $f|_Y=0$, then $f|_{\mathcal{M}_x}=0$ for every $x\in Y$.
Hence $\mathcal{M}_x\subseteq Y$ for every $x\in Y$.

By Proposition~\ref{mx is invariant}, the set $\mathcal{M}_x$ is $\alpha$-invariant. If $Y$ is a minimal $\alpha$-invariant subset of $X$, then $\mathcal{M}_x=Y$ for all $x\in Y$.
\end{proof}

Recall that $X_{\alpha}=\{x\in X|f(x)=0\,\,{\rm for\,all} \,f\in \ker{\alpha}\}$.

\begin{proposition}\label{a characterization of Xalpha}
$X_{\alpha}=\overline{\bigcup_{x\in X}\mathcal{M}_x}$.
\end{proposition}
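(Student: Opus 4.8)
The plan is to prove the set equality $X_\alpha=\overline{\bigcup_{x\in X}\mathcal{M}_x}$ by showing two inclusions, translating everything into statements about which functions vanish on which sets. Recall that $X_\alpha=\{x\in X\mid f(x)=0 \text{ for all } f\in\ker\alpha\}$, so $X_\alpha$ is exactly the closed subset whose vanishing ideal is $\ker\alpha$; equivalently, $J_{X_\alpha}=\ker\alpha$. Similarly $\mathcal{M}_x$ is the closed subset whose vanishing ideal is $\ker\alpha_x=\ker\big((ev_x\otimes id)\alpha\big)$. Since a closed set $Y$ is contained in a closed set $Z$ if and only if $J_Z\subseteq J_Y$, the two inclusions I must verify are: $\ker\alpha\subseteq\ker\alpha_x$ for every $x$ (giving $\mathcal{M}_x\subseteq X_\alpha$, hence $\overline{\bigcup_x\mathcal{M}_x}\subseteq X_\alpha$), and conversely, if $f$ vanishes on $\mathcal{M}_x$ for every $x\in X$, then $f\in\ker\alpha$ (giving $X_\alpha\subseteq\overline{\bigcup_x\mathcal{M}_x}$).

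First I would handle the easy inclusion. If $f\in\ker\alpha$, then $\alpha(f)=0$, so $\alpha_x(f)=(ev_x\otimes id)\alpha(f)=0$, i.e. $f\in\ker\alpha_x$. Thus $\ker\alpha\subseteq\ker\alpha_x$, which means every function vanishing on $\mathcal{M}_x$ is forced to vanish, wait — more precisely $J_{X_\alpha}=\ker\alpha\subseteq\ker\alpha_x=J_{\mathcal{M}_x}$ gives $\mathcal{M}_x\subseteq X_\alpha$. Since $X_\alpha$ is closed (and $\alpha$-invariant, by Proposition~\ref{Xalpha is invariant and infinite}), it contains $\overline{\bigcup_{x\in X}\mathcal{M}_x}$.

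For the reverse inclusion, suppose $f\in B$ vanishes on $\bigcup_{x\in X}\mathcal{M}_x$, hence on its closure. I want to conclude $f\in\ker\alpha$, i.e. $\alpha(f)=0$. The key observation is that $f|_{\mathcal{M}_x}=0$ means $f\in J_{\mathcal{M}_x}=\ker\alpha_x$, so $\alpha_x(f)=(ev_x\otimes id)\alpha(f)=0$ for \emph{every} $x\in X$. Now $\alpha(f)\in B\otimes A\cong C(X,A)$, so $(ev_x\otimes id)\alpha(f)$ is precisely the value of the $A$-valued continuous function $\alpha(f)$ at the point $x$; if this value is $0$ for all $x\in X$, then $\alpha(f)=0$. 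Hence $f\in\ker\alpha$, i.e. $f\in J_{X_\alpha}$, so $f$ vanishes on $X_\alpha$. Since this holds for every $f$ vanishing on $\overline{\bigcup_x\mathcal{M}_x}$, we get $X_\alpha\subseteq\overline{\bigcup_x\mathcal{M}_x}$, completing the proof.

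I do not expect a serious obstacle here; the only point requiring a little care is the identification $B\otimes A\cong C(X,A)$ and the fact that a $C(X,A)$-valued element is zero iff all its pointwise evaluations vanish — this is exactly the kind of argument already used in Lemma~\ref{lemma for a characterization of invariant subsets}, so I would simply invoke that reasoning. One should also note at the outset that $\bigcup_{x\in X}\mathcal{M}_x$ is a genuine subset of $X$ (each $\mathcal{M}_x$ is nonempty by Lemma~\ref{Mx is nonepmty and closed}), so its closure is a well-defined closed subset and the correspondence between closed subsets and vanishing ideals applies.
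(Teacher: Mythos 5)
Your proof is correct and is essentially the paper's own argument: the paper likewise observes that $\alpha(f)=0$ iff $(ev_x\otimes id)\alpha(f)=0$ for all $x$, iff $f|_{\mathcal{M}_x}=0$ for all $x$, and then concludes via the correspondence between closed subsets of $X$ and their vanishing ideals. You have merely unpacked this into two inclusions with the hull--kernel details made explicit, which matches the intended reasoning.
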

\begin{proof}
Note that $\alpha(f)=0$ if and only if $(ev_x\otimes id)\alpha(f)=0$ for all $x\in X$. That is, $\alpha(f)=0$ if and only if $f|_{\mathcal{M}_x}=0$ for all $x\in X$. This is equivalent to say that $X_{\alpha}=\overline{\bigcup_{x\in X}\mathcal{M}_x}$.
\end{proof}

\begin{theorem}\label{a characterization of injectivity of alpha}
The action $\alpha$ of $\mathcal{G}$ on $X$ is injective if and only if $X=X_{\alpha}=\overline{\bigcup_{x\in X}\mathcal{M}_x}$.
\end{theorem}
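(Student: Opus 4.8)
The plan is to prove the equivalence $\alpha$ injective $\iff X = X_\alpha$ directly from the definition of $X_\alpha$ as the closed subset corresponding to the ideal $\ker\alpha$, using the commutative structure of $B = C(X)$ and Proposition~\ref{a characterization of Xalpha} for the last equality. Since $B$ is commutative, $\ker\alpha$ is a closed ideal of $C(X)$, hence equals $J_Z$ for a unique closed subset $Z \subseteq X$; by the definition of $X_\alpha$ we have exactly $Z = X_\alpha$, i.e. $\ker\alpha = J_{X_\alpha}$.

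First I would observe that $\alpha$ is injective if and only if $\ker\alpha = \{0\}$, which by the correspondence between closed ideals and closed subsets of $X$ happens if and only if $J_{X_\alpha} = \{0\}$, that is, if and only if $X_\alpha = X$. For the forward direction: if $\alpha$ is injective, then $\ker\alpha = \{0\}$, so the only function vanishing on $X_\alpha$ is $0$; by Urysohn/the fact that $X$ is compact Hausdorff, this forces $X_\alpha = X$. For the reverse direction: if $X = X_\alpha$, then every $f \in \ker\alpha$ vanishes on all of $X$, so $f = 0$ and $\alpha$ is injective. The second equality $X_\alpha = \overline{\bigcup_{x\in X}\mathcal{M}_x}$ is then immediate from Proposition~\ref{a characterization of Xalpha}, which has already been established.

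Concretely I would write: $\alpha$ is injective $\iff \ker\alpha = \{0\} \iff J_{X_\alpha} = \{0\}$. Now $J_{X_\alpha} = \{0\}$ means no nonzero continuous function on $X$ vanishes identically on $X_\alpha$, which (since $X$ is compact Hausdorff and points are separated from closed sets by continuous functions) is equivalent to $X_\alpha = X$: if $X_\alpha \subsetneq X$, pick $x_0 \in X \setminus X_\alpha$ and a nonzero $f \in C(X)$ with $f(x_0) \neq 0$ and $f|_{X_\alpha} = 0$, contradicting $J_{X_\alpha} = \{0\}$; conversely if $X_\alpha = X$ then trivially $J_{X_\alpha} = \{0\}$. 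Combining with Proposition~\ref{a characterization of Xalpha} gives $X = X_\alpha = \overline{\bigcup_{x\in X}\mathcal{M}_x}$ in the injective case.

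I do not anticipate a serious obstacle here: the statement is essentially a translation between the algebraic condition (triviality of a kernel) and the topological condition (a closed set being the whole space) via Gelfand duality, together with the already-proven Proposition~\ref{a characterization of Xalpha}. The only point requiring a sentence of care is the claim that $\ker\alpha = J_{X_\alpha}$, which is literally the definition of $X_\alpha$ as the closed subset of $X$ whose vanishing ideal is $\ker\alpha$ — so this is a definitional unwinding rather than an argument.
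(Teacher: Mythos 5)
Your proposal is correct and follows essentially the same route as the paper: the paper simply notes that injectivity of $\alpha$ is equivalent to $X=X_{\alpha}$ (which you verify via the standard correspondence $\ker\alpha=J_{X_\alpha}$ between closed ideals of $C(X)$ and closed subsets of $X$) and then invokes Proposition~\ref{a characterization of Xalpha} for the equality $X_{\alpha}=\overline{\bigcup_{x\in X}\mathcal{M}_x}$. The only cosmetic remark is that $\ker\alpha=J_{X_\alpha}$ is not purely definitional but rests on the Gelfand fact that a closed ideal of $C(X)$ equals the vanishing ideal of its hull, which you do cite, so the argument is complete.
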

\begin{proof}
It is easy to check that $\alpha$ is injective if and only if $X=X_{\alpha}$. By Proposition~\ref{a characterization of Xalpha}, we finish the proof.
\end{proof}

Next, we want to show that every orbit is an invariant subset.

Recall that $B^{\alpha}\cong C(Y_{\alpha})$ and we denote the canonical quotient map from $X$ onto $Y_{\alpha}$ by $\pi$. Then we have the following,
\begin{lemma}\label{orbit is fiber}
For every $y\in Y_{\alpha}$, two points $x_1$ and $x_2$ are in $\pi^{-1}(y)$  if and only if $x_1$ and $x_2$ are in the same orbit.
\end{lemma}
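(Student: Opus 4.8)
The plan is to unwind the definitions on both sides. Recall that $Y_\alpha$ is the compact Hausdorff space whose algebra of functions is $B^\alpha$, and $\pi\colon X\to Y_\alpha$ is the quotient map dual to the inclusion $B^\alpha\hookrightarrow B$. Since $B^\alpha=(id\otimes h)\alpha(B)$, the function on $X$ corresponding to an element $(id\otimes h)\alpha(f)\in B^\alpha$ is precisely $x\mapsto (ev_x\otimes h)\alpha(f)$. Hence for $x_1,x_2\in X$ we have $\pi(x_1)=\pi(x_2)$ if and only if every element of $B^\alpha$ takes the same value at $x_1$ and $x_2$, i.e. $(ev_{x_1}\otimes h)\alpha(f)=(ev_{x_2}\otimes h)\alpha(f)$ for all $f\in B$. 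But that last condition is exactly $(ev_{x_1}\otimes h)\alpha=(ev_{x_2}\otimes h)\alpha$, which by Definition~\ref{orbit} says $x_1$ and $x_2$ lie in the same orbit.

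So the key steps, in order, are: first, identify $C(Y_\alpha)$ with $B^\alpha$ and describe $\pi$ concretely as the map dual to this inclusion, so that $\pi(x_1)=\pi(x_2)$ precisely when $x_1$ and $x_2$ are not separated by any function in $B^\alpha$; second, use $B^\alpha=(id\otimes h)\alpha(B)$ to rewrite ``not separated by any function in $B^\alpha$'' as the equality of functionals $(ev_{x_1}\otimes h)\alpha=(ev_{x_2}\otimes h)\alpha$ on $B$ — here one checks that evaluating the function $(id\otimes h)\alpha(f)$ at $x$ gives $(ev_x\otimes h)\alpha(f)$, which is immediate since $ev_x$ is the evaluation functional and tensor-factor evaluations commute with $id\otimes h$; third, recognize the resulting condition as the defining relation of ${\rm Orb}_{x_1}={\rm Orb}_{x_2}$, and note that $x_1,x_2\in\pi^{-1}(y)$ for a common $y$ is the same as $\pi(x_1)=\pi(x_2)$.

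There is essentially no hard analytic obstacle here; the statement is a bookkeeping translation between the quotient-space picture and the orbit relation. The only point requiring a little care is the identification in the first step: that the equivalence relation defining $Y_\alpha$ as $X/\!\!\sim$ is exactly ``same values on all of $B^\alpha$'', which is the standard Gelfand-duality description of the quotient space corresponding to a unital $C^*$-subalgebra of $C(X)$ — this is already recorded in the excerpt right before the lemma (``$x\sim y$ if and only if $(ev_x\otimes h)\alpha=(ev_y\otimes h)\alpha$''). Given that, the proof is a two-line chain of equivalences, and I would simply write it as such.
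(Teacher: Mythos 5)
Your proof is correct and follows essentially the same route as the paper: both arguments unwind the identification $B^\alpha=(id\otimes h)\alpha(B)$ together with the description of $Y_\alpha$ as the quotient of $X$ by the relation $(ev_{x_1}\otimes h)\alpha=(ev_{x_2}\otimes h)\alpha$, and then observe that this relation is exactly the definition of lying in the same orbit. Nothing further is needed.
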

\begin{proof}
Note that $B^{\alpha}=(id\otimes h)\alpha(B)$. We have that $x_1, x_2 \in \pi^{-1}(y)$ for $y\in Y_{\alpha}$ if and only if $$(ev_{x_1}\otimes h)\alpha(g)=(ev_{y}\otimes h)\alpha(g)=(ev_{x_2}\otimes h)\alpha(g)$$ for every $g\in B$. That is to say, $x_1$ and $x_2$ are in the same orbit.
\end{proof}
\begin{theorem}\label{invariance of orbit}
For every $x\in X$, the orbit ${\rm Orb}_x$ is an $\alpha$-invariant subset of $X$.
\end{theorem}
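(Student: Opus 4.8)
The plan is to exploit Lemma~\ref{orbit is fiber}, which identifies ${\rm Orb}_x$ with the fiber $\pi^{-1}(\pi(x))$ of the quotient map $\pi\colon X\to Y_{\alpha}$ coming from the inclusion $B^{\alpha}=C(Y_{\alpha})\subseteq B$; in particular ${\rm Orb}_x$ is a closed subset of $X$, so it is legitimate to ask whether its defining ideal $J_{{\rm Orb}_x}$ is $\alpha$-invariant. The crucial point is that the functions in $B^{\alpha}$ which vanish at $x$ in fact vanish on all of ${\rm Orb}_x$ --- two points in the same orbit cannot be separated by $B^{\alpha}$ --- and that, since $B^{\alpha}$ is the fixed-point algebra, $\alpha$ is trivial on such functions.

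Concretely I would argue in three steps. First, put $\mathfrak{m}=\{g\in B^{\alpha}\mid g(x)=0\}$, regarded as a subset of $B$ via $B^{\alpha}\subseteq B$, and show that $J_{{\rm Orb}_x}$ is exactly the closed ideal of $B$ generated by $\mathfrak{m}$; this uses that $C(Y_{\alpha})$ separates the points of the compact Hausdorff space $Y_{\alpha}$, together with the correspondence between closed ideals of $C(X)$ and closed subsets of $X$, so that $\bigcap_{g\in\mathfrak{m}}g^{-1}(0)=\pi^{-1}(\pi(x))={\rm Orb}_x$. Second, observe that for $g\in\mathfrak{m}$ one has $\alpha(g)=g\otimes 1$ by definition of $B^{\alpha}$, while $\pi_{{\rm Orb}_x}(g)=0$ since $g$ vanishes on ${\rm Orb}_x$, hence $(\pi_{{\rm Orb}_x}\otimes id)\alpha(g)=\pi_{{\rm Orb}_x}(g)\otimes 1=0$. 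Third, note that $(\pi_{{\rm Orb}_x}\otimes id)\alpha$ is a $*$-homomorphism, so its kernel is a closed ideal of $B$; it contains $\mathfrak{m}$ by the previous step, hence it contains the closed ideal generated by $\mathfrak{m}$, which is $J_{{\rm Orb}_x}$. Thus $(\pi_{{\rm Orb}_x}\otimes id)\alpha(f)=0$ for every $f\in J_{{\rm Orb}_x}$, i.e.\ $J_{{\rm Orb}_x}$ is $\alpha$-invariant, which is precisely the assertion that ${\rm Orb}_x$ is an $\alpha$-invariant subset of $X$. (Once the first step is in hand, one could equally well conclude through Proposition~\ref{characterization2 of invariant subset}.)

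The step I expect to require the most care is the first one: checking that $J_{{\rm Orb}_x}$ is generated, as a closed ideal, precisely by the fixed functions in $\mathfrak{m}$. Everything else is a formal computation with $*$-homomorphisms. A tempting shortcut would be to show that ${\rm Orb}_x$ is the support of the invariant state $(ev_x\otimes h)\alpha$ and then invoke Proposition~\ref{support of invariant measure is invariant}, but in general that support may be strictly smaller than ${\rm Orb}_x$ (the Haar measure need not be faithful), so this route does not apply directly and the argument above, routed through $B^{\alpha}$, seems the cleanest.
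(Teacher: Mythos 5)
Your argument is correct, and it rests on the same two pillars as the paper's proof --- Lemma~\ref{orbit is fiber} (the orbit is the fiber $\pi^{-1}(\pi(x))$ of the map dual to $B^{\alpha}\subseteq B$) and the fact that $\alpha(g)=g\otimes 1$ for $g\in B^{\alpha}$ --- but it packages them differently. The paper proves invariance via Proposition~\ref{characterization2 of invariant subset} and an explicit approximation: given $f$ vanishing on ${\rm Orb}_x$, it builds by Urysohn's lemma on $Y_{\alpha}$ a function $g_{\varepsilon}\in B^{\alpha}$ vanishing on the orbit with $\|f-fg_{\varepsilon}\|\leq\varepsilon$, and then evaluates $(ev_{x'}\otimes id)\alpha$ at points of the orbit. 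Note that this $\varepsilon$-argument is exactly a hands-on proof of the only nontrivial step in your proposal, namely that every $f\in J_{{\rm Orb}_x}$ lies in the closure of $B\cdot\mathfrak{m}$ with $\mathfrak{m}=\{g\in B^{\alpha}\mid g(x)=0\}$; you instead get this from the standard correspondence between closed ideals of $C(X)$ and closed subsets (the closed ideal generated by $\mathfrak{m}$ is $J_{Z(\mathfrak{m})}$, and $Z(\mathfrak{m})=\pi^{-1}(\pi(x))={\rm Orb}_x$ since $C(Y_{\alpha})$ separates points of $Y_{\alpha}$), after which the conclusion is formal because $\ker\bigl((\pi_{{\rm Orb}_x}\otimes id)\alpha\bigr)$ is a closed ideal containing $\mathfrak{m}$. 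So your route trades the paper's explicit $\varepsilon$/Urysohn computation for an abstract ideal-theoretic one; it is arguably cleaner, at the cost of invoking the Gelfand machinery rather than exhibiting the approximants. Your closing caveat is also accurate: ${\rm supp}\,\mu_x$ may be a proper subset of ${\rm Orb}_x$ when $h$ is not faithful (compare Theorem~\ref{support is minimal}), so Proposition~\ref{support of invariant measure is invariant} alone would not yield the statement.
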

\begin{proof}
By Proposition~\ref{characterization2 of invariant subset}, it suffices to show that for any $f\in C(X)$, if $f|_{{\rm Orb}_x}=0$, then
$(ev_{x'}\otimes id)\alpha(f)=0$ for every $x'\in {\rm Orb}_x$.

By Lemma~\ref{orbit is fiber}, there exists $y\in Y_{\alpha}$ such that $\pi^{-1}(y)={\rm Orb}_x$.

Let $f\in B$ such that $f|_{{\rm Orb}_x}=0$. For arbitrary $\varepsilon >0$, denote the closed subset $\{x\in X||f(x)|\geq \varepsilon\}$ by $E_{\varepsilon}$. Both $X$ and $Y_{\alpha}$ are compact Hausdorff spaces,

hence $\pi(E_{\varepsilon})$, denoted by $K_{\varepsilon}$, is also compact and Hausdorff. Since $y\notin K_{\varepsilon}$, by Urysohn's Lemma, there exists a $g_{\varepsilon}\in B^{\alpha}$, such that $0\leq g_{\varepsilon}\leq 1$, $g_{\varepsilon}(y)=0$ and  $g_{\varepsilon}|_{K_{\varepsilon}}=1$. Since $B^{\alpha}$ is a C*-subalgebra of $B$, the function $g_{\varepsilon}$ is also in $B$ and satisfies that $0\leq g_{\varepsilon}\leq 1$, $g_{\varepsilon}|_{{\rm Orb}_x}=0$ and $g_{\varepsilon}|_{E_{\varepsilon}}=1$.

Now consider $f-fg_{\varepsilon}$. Then $|f(x)- g_{\varepsilon}(x)f(x)|=0$ for every $x$ in $E_{\epsilon}$, and $|f(x)- g_{\varepsilon}(x)f(x)|<\varepsilon$ for all $x\in X\setminus E_{\varepsilon}$ since $|f(x)|<\varepsilon$ and $0\leq g_{\varepsilon}\leq 1$. Therefore $||f-f g_{\varepsilon}||<\varepsilon$ which implies
$$||(ev_{x'}\otimes id)\alpha(f)-(ev_{x'}\otimes id)\alpha(fg_{\varepsilon})||<\varepsilon$$ for  every
 $x'\in X$.

 Note that $g_{\varepsilon}\in B^{\alpha}$ and $g_{\varepsilon}|_{{\rm Orb}_x}=0$. For  every
 $x'\in {\rm Orb}_x$, we have that
 $$(ev_{x'}\otimes id)\alpha(fg_{\varepsilon})=(ev_{x'}\otimes id)(\alpha(f)(g_{\varepsilon}\otimes 1))=(ev_{x'}\otimes id)\alpha(f)g_{\varepsilon}(x')=0.$$
  Consequently, $||(ev_{x'}\otimes id)\alpha(f)||<\varepsilon$ for all $x'\in {\rm Orb}_x$. Note that $\varepsilon$ is arbitrary.
 So $(ev_{x'}\otimes id)\alpha(f)=0$ for every $x'\in {\rm Orb}_x$. This ends the proof.
\end{proof}
\begin{theorem}\label{a characterization of ergodic action by orbit}
The following are equivalent:
\begin{enumerate}
\item The action $\alpha$ is ergodic.
\item ${\rm Orb}_x=X$ for every $x\in X$.
\item There exists $x_0\in X$ such that ${\rm Orb}_{x_0}=X$.
\end{enumerate}
\end{theorem}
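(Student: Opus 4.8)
The plan is to establish the cycle of implications $(1) \Rightarrow (2) \Rightarrow (3) \Rightarrow (1)$, using the quotient space $Y_\alpha$ associated to $B^\alpha \cong C(Y_\alpha)$ together with the fact (Lemma~\ref{orbit is fiber}) that orbits are exactly the fibers of the canonical quotient map $\pi : X \to Y_\alpha$.

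First I would prove $(1) \Rightarrow (2)$. If $\alpha$ is ergodic, then $B^\alpha = \Complex 1_B$, so $C(Y_\alpha) \cong \Complex$, which forces $Y_\alpha$ to be a single point. Then $\pi : X \to Y_\alpha$ is the constant map, so $\pi^{-1}(y) = X$ for the unique $y \in Y_\alpha$. By Lemma~\ref{orbit is fiber}, $\mathrm{Orb}_x = \pi^{-1}(\pi(x)) = X$ for every $x \in X$. The implication $(2) \Rightarrow (3)$ is trivial, since $X$ is nonempty (if $X$ were empty there would be no compact quantum group action on $C(X)$, or one argues that $B = C(X)$ is unital hence $X \neq \emptyset$); just pick any $x_0 \in X$.

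The substantive implication is $(3) \Rightarrow (1)$, and I expect this to be the main obstacle. Suppose there is $x_0 \in X$ with $\mathrm{Orb}_{x_0} = X$. By the definition of the orbit, this means $(ev_{x_0} \otimes h)\alpha = (ev_{x'} \otimes h)\alpha$ for all $x' \in X$. Now take any $g \in B^\alpha$; then $\alpha(g) = g \otimes 1$, so $(ev_{x'} \otimes h)\alpha(g) = ev_{x'}(g)\, h(1) = g(x')$ for every $x'$. Combining with the orbit identity, $g(x') = (ev_{x'} \otimes h)\alpha(g) = (ev_{x_0} \otimes h)\alpha(g) = g(x_0)$ for every $x' \in X$. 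Hence every $g \in B^\alpha$ is the constant function with value $g(x_0)$, i.e.\ $B^\alpha = \Complex 1_B$, which is exactly ergodicity of $\alpha$. (Equivalently, one can phrase this via $Y_\alpha$: $\mathrm{Orb}_{x_0} = X$ means the single fiber $\pi^{-1}(\pi(x_0))$ is all of $X$, so $\pi$ is constant, so $Y_\alpha$ is a point and $B^\alpha = C(Y_\alpha) = \Complex$.)

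The only delicate point to watch is that the equalities of states should be read as equalities of linear functionals on all of $B = C(X)$, so that restricting them to the subalgebra $B^\alpha$ and using $\alpha|_{B^\alpha} = (\cdot) \otimes 1$ together with $h(1_A) = 1$ is legitimate; this is immediate. No appeal to coamenability or to the reduced quantum group is needed here — this is a purely topological/functional-analytic argument built on Lemma~\ref{orbit is fiber} and the normalization of the Haar state.
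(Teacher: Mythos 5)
Your proof is correct and takes essentially the same route as the paper: the cycle $(1)\Rightarrow(2)\Rightarrow(3)\Rightarrow(1)$, with everything resting on the observation that the states $(ev_x\otimes h)\alpha$ determine the orbits and that any $g\in B^{\alpha}$ satisfies $(ev_{x'}\otimes h)\alpha(g)=g(x')$. The only cosmetic difference is that you route $(1)\Rightarrow(2)$ through $Y_\alpha$ and Lemma~\ref{orbit is fiber}, while the paper argues directly that $(id\otimes h)\alpha(f)$ is a constant function for every $f$ when $\alpha$ is ergodic; the underlying computation is the same.
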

\begin{proof}
Obviously (2) implies (3). So we just prove that  (1) implies (2) and (3) implies (1).

(1)$\Rightarrow $(2). Suppose that $\alpha$ is ergodic. Then $(id\otimes h)\alpha(f)$ is a constant function on $X$ for every $f\in B$. Therefore,  $(ev_x\otimes h)\alpha(f)=(ev_{x'}\otimes h)\alpha(f)$ for all $x$ and $x'$ in $X$. Consequently ${\rm Orb}_x=X$.

(3)$\Rightarrow $(1). If there exists $x_0\in X$ such that ${\rm Orb}_{x_0}=X$. We have that $(ev_x\otimes h)\alpha(f)=(ev_{x_0}\otimes h)\alpha(f)$ for every $f\in B$ and  $x \in X$. This is equivalent to say that $(id\otimes h)\alpha(f)$ is a constant function on $X$ for every $f\in B$. Therefore $\alpha$ is ergodic.
\end{proof}

Since for an action $\alpha$ of $\mathcal{G}$ on $X$, every orbit ${\rm Orb}_x$ is invariant,  $\alpha$ induces an action $\delta_x$ of $\mathcal{G}$ on ${\rm Orb}_x$. Let $J_x=\{f\in B|\,f|_{{\rm Orb}_x}=0\}$ be the ideal consisting of continuous functions varnishing in ${\rm Orb}_x$ and $\pi_x$ be the quotient map from $B$ onto $B/J_x$. Then $\delta_x(f+J_x)=(\pi_x\otimes id)\alpha(f)$ for every $f$ in $B$.

\begin{lemma}\label{induced action on orbit is ergodic}
The action $\delta_x$ on ${\rm Orb}_x$ is ergodic.
\end{lemma}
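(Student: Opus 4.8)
The plan is to show that ${\rm Orb}_x$, equipped with the induced action $\delta_x$, has a unique invariant state, and then invoke Theorem~\ref{ergodic and unique invariant measure}. The key structural fact we will exploit is Lemma~\ref{orbit is fiber}, which identifies ${\rm Orb}_x$ with a single fiber $\pi^{-1}(y)$ of the quotient map $\pi\colon X \to Y_\alpha$, where $Y_\alpha$ is the spectrum of the fixed-point algebra $B^\alpha = (id\otimes h)\alpha(B)$. The point of this identification is that any two points of ${\rm Orb}_x$ induce the same invariant state: by Definition~\ref{orbit}, $x_1,x_2 \in {\rm Orb}_x$ exactly when $(ev_{x_1}\otimes h)\alpha = (ev_{x_2}\otimes h)\alpha$ on all of $B$.

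First I would recall that, by the general description of invariant states, every $\delta_x$-invariant state on $B/J_x = C({\rm Orb}_x)$ has the form $(\psi\otimes h_r)\delta_{x,r}$ or more simply is of the form $\nu \mapsto (\psi\otimes h)(\widetilde{\delta_x}\text{-lift})$ for some state $\psi$ on $C({\rm Orb}_x)$; concretely, using the formula ${\rm Inv}_{\delta_x} = \{(\psi\otimes h)\delta_x \mid \psi\in S(C({\rm Orb}_x))\}$ stated in the preliminaries. So it suffices to show that $(\psi\otimes h)\delta_x$ does not depend on $\psi$. Since states on $C({\rm Orb}_x)$ are $weak^*$-limits of convex combinations of evaluation functionals $ev_{x'}$ with $x'\in {\rm Orb}_x$, and since $(ev_{x'}\otimes h)\delta_x$ corresponds under the quotient to $(ev_{x'}\otimes h)\alpha$, it is enough to check that $(ev_{x'}\otimes h)\alpha$ is the same functional on $B$ for all $x'\in {\rm Orb}_x$ — but this is precisely the defining property of the orbit. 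By linearity and $weak^*$-continuity, $(\psi\otimes h)\delta_x$ is then independent of $\psi$, hence there is a unique $\delta_x$-invariant state on $C({\rm Orb}_x)$.

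Finally, applying Theorem~\ref{ergodic and unique invariant measure} to the action $\delta_x$ of $\mathcal{G}$ on the unital $C^*$-algebra $B/J_x \cong C({\rm Orb}_x)$ yields that $\delta_x$ is ergodic. I expect the main (small) obstacle to be bookkeeping: one must be careful that the functional $(ev_{x'}\otimes h)\alpha$, which a priori lives on $B$, descends to a well-defined functional on $B/J_x$ — that is, that it annihilates $J_x = \{f : f|_{{\rm Orb}_x}=0\}$. This is where the invariance of ${\rm Orb}_x$ (Theorem~\ref{invariance of orbit}) together with Lemma~\ref{lemma for a characterization of invariant subsets} is needed: for $f\in J_x$ and $x'\in {\rm Orb}_x$ we have $(ev_{x'}\otimes id)\alpha(f)=0$, so a fortiori $(ev_{x'}\otimes h)\alpha(f)=0$. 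Once this descent is justified, the uniqueness argument above goes through verbatim and the lemma follows.
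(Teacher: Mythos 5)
Your argument is correct, but it takes a slightly longer route than the paper. The shared core is the same: for $x'\in{\rm Orb}_x$ the evaluation $\widetilde{ev}_{x'}$ on $B/J_x$ satisfies $(\widetilde{ev}_{x'}\otimes h)\delta_x(f+J_x)=(ev_{x'}\otimes h)\alpha(f)$, which by the very definition of the orbit is independent of $x'$. The paper stops essentially there: since $C({\rm Orb}_x)^{\delta_x}=(id\otimes h)\delta_x\bigl(C({\rm Orb}_x)\bigr)$ and the computation shows each $(id\otimes h)\delta_x(f+J_x)$ is a constant function on ${\rm Orb}_x$, the fixed-point algebra is $\Complex$ and ergodicity follows directly, with no appeal to invariant states. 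You instead feed the same identity into ${\rm Inv}_{\delta_x}=\{(\psi\otimes h)\delta_x\mid\psi\in S(C({\rm Orb}_x))\}$, use weak$^*$-density of convex combinations of point evaluations in the state space to conclude $(\psi\otimes h)\delta_x$ is independent of $\psi$, and then invoke Theorem~\ref{ergodic and unique invariant measure} (whose ``unique invariant state implies ergodic'' direction is the nontrivial one proved via Theorem~\ref{l:T bijective}). This is valid, since that theorem is established earlier in the paper, but it imports heavier machinery (Krein--Milman plus Theorem~\ref{ergodic and unique invariant measure}) for what the paper settles with a three-line computation on the fixed-point algebra. One small bookkeeping remark: the descent you worry about at the end is automatic once $\delta_x$ is well defined --- and it is the invariance of ${\rm Orb}_x$ (Theorem~\ref{invariance of orbit}) that makes $\delta_x(f+J_x)=(\pi_x\otimes id)\alpha(f)$ well defined in the first place, rather than being needed separately to make $(ev_{x'}\otimes h)\alpha$ annihilate $J_x$ (that is immediate since $\widetilde{ev}_{x'}$ kills $J_x$).
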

\begin{proof}

 Let $\widetilde{ev}_x$ and $\widetilde{ev}_y$ be the evaluation functional on $B/J_x$ at any two points $x$ and $y$ in ${\rm Orb}_x$. Then for every $f$ in $B$
\begin{align*}
(\widetilde{ev}_y\otimes h)\delta_x(f+J_x)&=(\widetilde{ev}_y\otimes h)(\pi_x\otimes id)\alpha(f)\\
                                            &=(ev_y\otimes h)\alpha(f)=(ev_x\otimes h)\alpha(f)\\
                                            &=(\widetilde{ev}_x\otimes h)\delta_x(f+J_x).
\end{align*}
\end{proof}

\begin{proposition}\label{Inv subset's orbit}
Suppose that $Y$ is an $\alpha$-invariant subset of $X$. For every $y\in Y$, the orbit of $y$ under $\alpha_Y$ is
$Y\bigcap {\rm Orb}_y.$
\end{proposition}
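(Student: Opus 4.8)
The plan is to unwind the definition of the orbit for the induced action $\alpha_Y$ and match it with that of $\alpha$ through the quotient map $\pi_Y:B\to B/J_Y\cong C(Y)$. First I would record the elementary identity $\widetilde{ev}_x\circ\pi_Y=ev_x$ for every $x\in Y$, where $\widetilde{ev}_x$ denotes evaluation at $x$ on $C(Y)\cong B/J_Y$; this makes sense precisely because every $f\in J_Y$ vanishes on $Y$, so $\widetilde{ev}_x$ is a well-defined state on the quotient (cf.\ the functional $\widetilde{ev_x}$ in Lemma~\ref{lemma for a characterization of invariant subsets}). Combining this with the defining formula $\alpha_Y(f+J_Y)=(\pi_Y\otimes id)\alpha(f)$ yields, for all $x\in Y$ and $f\in B$,
\[
(\widetilde{ev}_x\otimes h)\alpha_Y(f+J_Y)=(\widetilde{ev}_x\pi_Y\otimes h)\alpha(f)=(ev_x\otimes h)\alpha(f).
\]
In other words, the invariant state $(\widetilde{ev}_x\otimes h)\alpha_Y$ on $B/J_Y$ is exactly the one whose pullback along $\pi_Y$ is $(ev_x\otimes h)\alpha$ (this is the content of Lemma~\ref{invariance of pull back} specialized to the states $(ev_x\otimes h)\alpha$, but here the one-line computation suffices).

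Next, since $\pi_Y$ is surjective, two bounded linear functionals on $B/J_Y$ agree if and only if their pullbacks along $\pi_Y$ agree. Applying this to $y,y'\in Y$ gives that $(\widetilde{ev}_{y'}\otimes h)\alpha_Y=(\widetilde{ev}_y\otimes h)\alpha_Y$ holds if and only if $(ev_{y'}\otimes h)\alpha=(ev_y\otimes h)\alpha$, that is, if and only if $y'\in{\rm Orb}_y$. By Definition~\ref{orbit} applied to the action $\alpha_Y$ of $\mathcal{G}$ on $Y$, the former condition describes precisely the orbit of $y$ under $\alpha_Y$, which is by construction a subset of $Y$. Hence the orbit of $y$ under $\alpha_Y$ equals $\{y'\in Y\mid y'\in{\rm Orb}_y\}=Y\cap{\rm Orb}_y$, as claimed.

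I do not expect any real obstacle here: the argument is a direct diagram chase through the definitions of the induced action and of the orbit, and the only points needing (minor) care are the well-definedness of the evaluation functionals on the quotient $B/J_Y$ and the use of surjectivity of $\pi_Y$ to transfer equality of functionals between $B$ and $B/J_Y$.
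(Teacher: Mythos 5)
Your proposal is correct and follows essentially the same route as the paper: the paper's proof is exactly the one-line observation that, for $y,y'\in Y$, the invariant states $(ev_y\otimes h)\alpha_Y$ and $(ev_{y'}\otimes h)\alpha_Y$ coincide if and only if $(ev_y\otimes h)\alpha=(ev_{y'}\otimes h)\alpha$, which you have simply spelled out via $\widetilde{ev}_x\pi_Y=ev_x$ and the surjectivity of $\pi_Y$. No issues.
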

\begin{proof}
For $y$ and $y'$ in $Y$, $(ev_y\otimes h)\alpha_Y=(ev_{y'}\otimes h)\alpha_Y$ if and only if $(ev_y\otimes h)\alpha=(ev_{y'}\otimes h)\alpha$.
\end{proof}

Let $\mu_x=(ev_x\otimes h)\alpha$ for $x\in X$. Then $\mu_x$ is an $\alpha$-invariant state on $B$. The support of $\mu_x$, denoted by ${\rm supp}\mu_x$, is an $\alpha$-invariant subset by Theorem~\ref{support of invariant measure is invariant}. Next we show that minimal invariant subsets are always of the form ${\rm supp}\mu_x$.

\begin{theorem}\label{support is minimal}
Consider a compact quantum group action $\alpha$ of $\A$ on a compact Hausdorff space $X$. For every $x\in X$, we have that ${\rm supp}\mu_x\subseteq\mathcal{M}_x\subseteq{\rm Orb}_x$ and that ${\rm supp}\mu_x$ is a minimal $\alpha$-invariant subset of $X$.  Moreover, if the Haar measure $h$ of $\mathcal{G}$ is faithful, then ${\rm supp}\mu_x=\mathcal{M}_x$.
\end{theorem}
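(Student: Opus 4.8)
The plan is to establish the chain of inclusions first and then show minimality. For the inclusion ${\rm supp}\,\mu_x \subseteq \mathcal{M}_x$, I would unwind the definitions: if $f \in \ker\alpha_x$, then $\alpha_x(f) = (ev_x \otimes id)\alpha(f) = 0$, and applying $id \otimes h$ (or rather $h$ to the $A$-factor, noting the state property) gives $\mu_x(f) = (ev_x \otimes h)\alpha(f) = h(\alpha_x(f)) = 0$; more to the point, applying the same reasoning to $f^*f$ shows $\mu_x(f^*f) = (ev_x\otimes h)\alpha(f^*f) = h(\alpha_x(f)^*\alpha_x(f)) = 0$ whenever $\alpha_x(f) = 0$, so $f \in N_{\mu_x}$, hence $\ker\alpha_x \subseteq N_{\mu_x}$, which translates on the level of closed subsets into ${\rm supp}\,\mu_x \subseteq \mathcal{M}_x$. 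The inclusion $\mathcal{M}_x \subseteq {\rm Orb}_x$ should follow from Proposition~\ref{minimal invariant space is mx} together with Theorem~\ref{invariance of orbit}: since ${\rm Orb}_x$ is an $\alpha$-invariant subset containing $x$, we get $\mathcal{M}_x \subseteq {\rm Orb}_x$ directly from the first assertion of that proposition.

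For the minimality of ${\rm supp}\,\mu_x$, I would argue by contradiction. Suppose $Z \subsetneq {\rm supp}\,\mu_x$ is a nonempty closed $\alpha$-invariant subset. The key point is that the induced action $\alpha_{{\rm supp}\,\mu_x}$ on $C({\rm supp}\,\mu_x)$ is ergodic — this comes from combining Lemma~\ref{induced action on orbit is ergodic} or, more directly, from the fact that the induced state $\mu$ on $B/J_{{\rm supp}\,\mu_x}$ is faithful (that is the content of Theorem~\ref{invariant support}: the action $\alpha_\mu$ on $B/I_\mu$ is injective, and here $I_{\mu_x} = N_{\mu_x} = J_{{\rm supp}\,\mu_x}$ since $B$ is commutative, per Example~\ref{ex:Inv sub}(2)). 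Actually the cleanest route: on ${\rm supp}\,\mu_x$ the pushed-forward state $\mu$ is faithful and $\alpha$-invariant; if $Z \subsetneq {\rm supp}\,\mu_x$ were invariant, then by Proposition~\ref{inv subset is ergodic}-type reasoning there would be an invariant state supported on $Z$, but a faithful invariant state on $C({\rm supp}\,\mu_x)$ forces every invariant subset to be all of ${\rm supp}\,\mu_x$ — because an invariant state whose support is $Z$ would not be faithful, contradicting... no, I need uniqueness. Let me instead use: on $C({\rm supp}\,\mu_x)$ there is a faithful invariant state $\mu$; a nonempty invariant closed $Z \subsetneq {\rm supp}\,\mu_x$ carries its own invariant state $\nu$ (pull back any invariant state on $C(Z)$, which exists as $Inv = \{(\psi\otimes h)\alpha_Z\}$ is nonempty), and ${\rm supp}\,\nu \subseteq Z \subsetneq {\rm supp}\,\mu_x$; but then averaging, say $\tfrac12(\mu + \nu)$ or rather noting that $\nu \neq \mu$ since their supports differ, we get two distinct invariant states, which by Theorem~\ref{ergodic and unique invariant measure} contradicts ergodicity of $\alpha_{{\rm supp}\,\mu_x}$. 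So I first must pin down why $\alpha_{{\rm supp}\,\mu_x}$ is ergodic; the faithfulness of $\mu$ on the quotient (Theorem~\ref{invariant support}) is not by itself ergodicity, so the honest argument is: any invariant state $\nu$ on $C({\rm supp}\,\mu_x)$ satisfies ${\rm supp}\,\nu \subseteq {\rm supp}\,\mu_x$ trivially, but I want the reverse — here is where I would invoke that $\mu$ faithful means $N_\mu = 0$, so there is no proper invariant ideal that is the GNS kernel of any invariant state, hence by Theorem~\ref{invariant support} applied inside $C({\rm supp}\,\mu_x)$ every invariant state already has full support, and distinct full-support states can coexist, so I cannot conclude uniqueness this way.

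Given that tension, the genuinely robust approach to minimality is: let $Z \subseteq {\rm supp}\,\mu_x$ be a minimal invariant subset (exists by a Zorn's lemma argument on the family of nonempty closed invariant subsets, using Proposition~\ref{union is invariant} to handle intersections along chains). Pick $z \in Z$. Then $\mathcal{M}_z = Z$ by Proposition~\ref{minimal invariant space is mx}, and one checks ${\rm supp}\,\mu_z \subseteq Z$. The crucial observation is that $\mu_x$ restricted appropriately relates to $\mu_z$ via the coproduct identity~\eqref{eq:identity for minimality of mx}: since $z \in {\rm supp}\,\mu_x$, every neighborhood of $z$ has positive $\mu_x$-mass, and one shows that invariance forces ${\rm supp}\,\mu_x = {\rm supp}\,\mu_z$; indeed using $\mu_x = (ev_x \otimes h)\alpha$ and $\alpha$-invariance of $\mu_x$ one has $\mu_x = (\mu_x \otimes h)\alpha$, and iterating with Lemma~\ref{lemma of invariant support}'s machinery shows the support is self-reproducing under the orbit structure. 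I expect the main obstacle to be exactly this last point — proving ${\rm supp}\,\mu_x$ contains no proper invariant subset — and I anticipate the cleanest proof runs through showing $\mu_x$ and $\mu_z$ (for $z$ in a minimal subset inside ${\rm supp}\,\mu_x$) have equal support by exploiting that $\mu_z$ can be written as $(ev_z \otimes h)\alpha$ and that $z \in {\rm supp}\,\mu_x$ implies, via faithfulness considerations of $h$ on $\mathscr{A}$ and the identity $(\pi_x \otimes id)\alpha = (\widetilde{\alpha_x}^{-1}\otimes id)\Delta\alpha_x$, that $\mu_z \leq C\mu_x$ in an appropriate sense forcing ${\rm supp}\,\mu_z = {\rm supp}\,\mu_x$, hence $Z = {\rm supp}\,\mu_x$. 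Finally, the last sentence — if $h$ is faithful then ${\rm supp}\,\mu_x = \mathcal{M}_x$ — follows because for $f \geq 0$ with $\alpha_x(f) \neq 0$ we get $\mu_x(f) = h(\alpha_x(f)) > 0$ by faithfulness of $h$, giving $N_{\mu_x} \subseteq \ker\alpha_x$ and hence $\mathcal{M}_x \subseteq {\rm supp}\,\mu_x$, which combined with the already-established reverse inclusion yields equality.
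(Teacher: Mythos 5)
Your two inclusions and the final statement are fine and essentially the paper's argument: $\ker\alpha_x\subseteq N_{\mu_x}$ gives ${\rm supp}\,\mu_x\subseteq\mathcal{M}_x$, Proposition~\ref{minimal invariant space is mx} applied to the invariant set ${\rm Orb}_x$ (Theorem~\ref{invariance of orbit}) gives $\mathcal{M}_x\subseteq{\rm Orb}_x$, and faithfulness of $h$ gives $N_{\mu_x}\subseteq\ker\alpha_x$, hence equality. The genuine gap is the minimality of ${\rm supp}\,\mu_x$, which is the heart of the theorem. Your first route you correctly abandon yourself (faithfulness of the induced state on $C({\rm supp}\,\mu_x)$ does not give ergodicity there), but your replacement is not a proof: the decisive step --- that for $z$ in an invariant subset $Z\subseteq{\rm supp}\,\mu_x$ one has ${\rm supp}\,\mu_z={\rm supp}\,\mu_x$ --- is only conjectured, via an unspecified bound ``$\mu_z\leq C\mu_x$ in an appropriate sense'' and ``iterating with Lemma~\ref{lemma of invariant support}'s machinery,'' and you explicitly flag it as the expected obstacle rather than establish it. As written, nothing in the proposal rules out a proper nonempty invariant subset of ${\rm supp}\,\mu_x$, so the minimality claim is unproven.

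What you are missing is available one line away from what you already proved. Since ${\rm supp}\,\mu_x\subseteq{\rm Orb}_x$, any point $z$ of a nonempty closed invariant $Y\subseteq{\rm supp}\,\mu_x$ lies in ${\rm Orb}_x$, and by the very definition of the orbit $\mu_z=(ev_z\otimes h)\alpha=(ev_x\otimes h)\alpha=\mu_x$; combined with ${\rm supp}\,\mu_z\subseteq\mathcal{M}_z\subseteq Y$ (Proposition~\ref{minimal invariant space is mx} again), this forces ${\rm supp}\,\mu_x\subseteq Y$, i.e.\ $Y={\rm supp}\,\mu_x$ --- no constant $C$, no iteration, and no Zorn's lemma needed. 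The paper argues differently but in the same spirit you first attempted, only on the right space: the induced action on ${\rm Orb}_x$ (not on ${\rm supp}\,\mu_x$) is ergodic by Lemma~\ref{induced action on orbit is ergodic}, so invariant states there are unique (Theorem~\ref{ergodic and unique invariant measure}); a proper invariant $Y\subsetneq{\rm supp}\,\mu_x$ would carry an invariant state whose pullback to $C({\rm Orb}_x)$ (Lemma~\ref{invariance of pull back}) has support inside $Y$, hence is a second invariant state distinct from $\mu_x$, a contradiction. Either of these closes the gap; your text as it stands does not.
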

\begin{proof}
 For every $x\in X$, if $f|_{\mathcal{M}_x}=0$ for some $f\in B$, then $\alpha_x(f^*f)=0$. It follows that $\mu_x(f^*f))=h(\alpha_x(f^*f))=0$, which means that $f|_{{\rm supp}\mu_x}=0$. Both $\mathcal{M}_x$ and ${\rm supp}\mu_x$ are closed subsets of $X$. Therefore, ${\rm supp}\mu_x\subseteq\mathcal{M}_x$.

By Proposition~\ref{minimal invariant space is mx}, for every $x\in X$, $\mathcal{M}_x\subseteq {\rm Orb}_x$ since $x\in {\rm Orb}_x$.

 Suppose that ${\rm supp}\mu_x$ is not minimal. Then there exists an $\alpha$-invariant subset $Y\subsetneq {\rm supp}\mu_x$. Denote the induced actions of $\mathcal{G}$ on $Y$ and ${\rm Orb}_x$ by $\alpha_Y$ and $\alpha_x$ respectively. Choose an $\alpha_Y$-invariant  state $\omega$ on $C(Y)$, and denote the pull back of it to $C({\rm Orb}_x)$ by $\omega_x$. It follows that $\omega_x$ is also an $\alpha_x$-invariant state on $C({\rm Orb}_x)$, and $\omega_x$ is different from $\mu_x$ since the support of $\omega_x$ which is contained in $Y$ by Lemma~\ref{invariance of pull back}, is a proper subset of the support of $\mu_x$. Since the action  $\alpha_x$ is ergodic by Lemma~\ref{induced action on orbit is ergodic}, this leads to a contradiction to the uniqueness of $\alpha_x$-invariant states on ${\rm Orb}_x$. So ${\rm supp}\mu_x$ is minimal.

 Now assume that $h$ is faithful. For $x\in X$, if $h(\alpha_x(f^*f))=\mu_x(f^*f)=0$ for $f\in B$, then $\alpha_x(f^*f)=0$ by the faithfulness of $h$. This implies that $\mathcal{M}_x\subseteq {\rm supp}\mu_x$.
\end{proof}

So far, all examples of compact quantum group actions on compact Hausdorff spaces illustrate that ${\rm supp}\mu_x=\mathcal{M}_x={\rm Orb}_x$. Later we show that every orbit is minimal if the compact quantum group is coamenable or the space $X$ is countable~(see Theorem~\ref{coamenable implies minimal orbits}, Corollary~\ref{finite is minimal} and Corollary~\ref{acc2}). Hence we give the following conjecture.
\begin{conjecture}\label{orbit minimal}
Consider a compact quantum quantum group action on $X$. Then ${\rm supp}\mu_x=\mathcal{M}_x={\rm Orb}_x$ for every $x\in X$.
\end{conjecture}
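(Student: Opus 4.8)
The plan is to reduce the conjectured identity ${\rm supp}\mu_x={\mathcal M}_x={\rm Orb}_x$ to a single assertion about the ergodic system carried by one orbit, and then to establish that assertion whenever enough extra structure is available. By Theorem~\ref{support is minimal} the chain ${\rm supp}\mu_x\subseteq{\mathcal M}_x\subseteq{\rm Orb}_x$ is already known, ${\rm supp}\mu_x$ is a minimal $\alpha$-invariant subset, and by Lemma~\ref{induced action on orbit is ergodic} the induced action $\delta_x$ on $C({\rm Orb}_x)$ is ergodic; moreover $\mu_x$ annihilates $J_x$, hence descends to the unique $\delta_x$-invariant state $\overline{\mu_x}$ on $C({\rm Orb}_x)$ with ${\rm supp}\,\overline{\mu_x}={\rm supp}\,\mu_x$ by Lemma~\ref{invariance of pull back}. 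Thus the conjecture is equivalent to the statement: \emph{the unique invariant state of an ergodic compact quantum group action on a commutative unital $C^*$-algebra has full support}, i.e.\ such an action has no proper invariant closed subset --- the exact quantum analogue of the classical fact that a transitive action admits no proper invariant closed set.

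First I would treat the case in which the Haar measure $h$ is faithful: Theorem~\ref{support is minimal} then gives ${\rm supp}\mu_x={\mathcal M}_x$, reducing matters to ${\mathcal M}_x={\rm Orb}_x$, and this follows from the coamenable argument below since faithfulness of $h$ is implied by coamenability. For coamenable $\mathcal{G}$ the counit $\varepsilon$ is bounded on $A$, and the key input is the fact that an ergodic action of a coamenable compact quantum group carries a faithful invariant state (one route: push the faithfulness of the Haar state through the GNS picture of $\overline{\mu_x}$, using the unitary $U$ of Lemma~\ref{lemma of invariant support} together with boundedness of $\varepsilon$). Since a faithful state on the commutative algebra $C({\rm Orb}_x)$ has full support, ${\rm supp}\mu_x={\rm Orb}_x$ and all three sets coincide; this is Theorem~\ref{coamenable implies minimal orbits} below.

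The countable case proceeds by a different route, via the structure of actions on finite spaces. When $X$ is countable each orbit ${\rm Orb}_x$ is a countable compact Hausdorff space carrying the ergodic action $\delta_x$; applying Theorem~\ref{erdodic action on finite spaces} to the finite closed invariant subquotients of ${\rm Orb}_x$ and using that ${\rm supp}\mu_x$ is a \emph{minimal} invariant subset, one shows that a proper minimal invariant subset strictly inside ${\rm Orb}_x$ is incompatible with ergodicity of $\delta_x$. This yields Corollary~\ref{finite is minimal} and, after iterating over isolated points (the Cantor--Bendixson stratification), Corollary~\ref{acc2}.

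The step I expect to be the real obstacle is the remaining case, $\mathcal{G}$ non-coamenable and $X$ uncountable. Here the invariant-state machinery is powerless: an ergodic system has exactly one invariant state, and the candidate ``smaller'' invariant subset ${\rm supp}\mu_x$ carries precisely that state, so no argument counting invariant states can decide whether ${\rm supp}\mu_x$ is proper in ${\rm Orb}_x$. The quantum-dimension obstructions of Section~3 (Theorem~\ref{inv state is tracial haar measure is tracial}) do not apply either, since the invariant state on $C(X)$ need not be tracial. Settling the conjecture in general would seem to require a direct analysis of the Hilbert-module unitary attached to $\mu_x$ --- showing that its carrier Hilbert space exhausts $C({\rm Orb}_x)$ --- or some genuinely new invariant, which is why we leave it as a conjecture.
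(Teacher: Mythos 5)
This statement is left open in the paper: it is stated as a conjecture precisely because the author can only prove the chain ${\rm supp}\,\mu_x\subseteq\mathcal{M}_x\subseteq{\rm Orb}_x$ together with minimality of ${\rm supp}\,\mu_x$ (Theorem~\ref{support is minimal}), and the full equality in two special situations: coamenable $\mathcal{G}$ (Theorem~\ref{coamenable implies minimal orbits}) and finite or countable $X$ (Corollary~\ref{finite is minimal}, Corollary~\ref{acc2}). Your proposal is correctly calibrated to this: you do not claim a proof, your reduction of the conjecture to the assertion that the unique invariant state of an ergodic action on a commutative unital $C^*$-algebra has full support is valid (it uses exactly Lemma~\ref{induced action on orbit is ergodic}, Theorem~\ref{a characterization of ergodic action by orbit} and Lemma~\ref{invariance of pull back}), and your coamenable case is the content of Theorem~\ref{coamenable implies minimal orbits}; the paper's own argument there is more elementary than your GNS sketch --- for $f\geq 0$ vanishing on an invariant set $Y$ containing $x$ one gets $(ev_y\otimes h)\alpha(f)=0$ for every $y\in{\rm Orb}_x$, and faithfulness of $h$ plus boundedness of $\varepsilon$ give $f(y)=0$ directly. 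Your identification of the genuinely open case (non-coamenable $\mathcal{G}$, general $X$) and of why invariant-state counting cannot decide it agrees with the paper's reason for leaving this as a conjecture.

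The one substantive divergence is your countable case. The paper does not obtain Corollary~\ref{acc2} by feeding finite invariant subquotients of ${\rm Orb}_x$ into Theorem~\ref{erdodic action on finite spaces}, nor by a Cantor--Bendixson iteration over isolated points. It first proves that the invariant measure of an ergodic action on a compact Hausdorff space with infinitely many points is non-atomic (Theorem~\ref{nonatomic inv measure}, via Lemmas~\ref{natm1} and~\ref{natm2}), deduces that a countably infinite compact space is not a quantum homogeneous space, and only then concludes that every orbit in a countable space is finite, after which Corollary~\ref{finite is minimal} applies. Your sketch as written does not explain how a countably infinite orbit is excluded --- that exclusion is exactly the non-trivial analytic step, and ``minimality of ${\rm supp}\,\mu_x$ is incompatible with ergodicity'' does not by itself rule it out, since an ergodic system always tolerates its single invariant state being supported on a proper subset unless something like non-atomicity or faithfulness is brought in. So your outline is consistent with the paper's partial results, but at that step it gestures at an argument the paper replaces by Theorem~\ref{nonatomic inv measure}.
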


If we consider the reduced action $\alpha_r$ of $\mathcal{G}_r$ on $X$, then it turns out that two points are in the same orbit under $\alpha$ if and only if these two points are in the same orbit under $\alpha_r$.

\begin{proposition}\label{orbit unchange under reduced action}
For $x\in X$, the orbit of $x$ under the action of $\mathcal{G}$ on $X$ is the same as the orbit of $x$ under the action of $\mathcal{G}_r$ on $X$.
\end{proposition}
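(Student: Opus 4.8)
The plan is to observe that the defining condition for membership in an orbit is literally unchanged when one passes from $\alpha$ to the reduced action $\alpha_r$. First I would recall from the excerpt the two relations that do all the work: $\alpha_r=(id\otimes\pi_r)\alpha$ by definition of the reduced action, and $h=h_r\pi_r$ by Theorem~2.1 of~\cite{BMT2001}. With these in hand, the computation is immediate: for any $x'\in X$,
$$(ev_{x'}\otimes h_r)\alpha_r=(ev_{x'}\otimes h_r)(id\otimes\pi_r)\alpha=(ev_{x'}\otimes(h_r\circ\pi_r))\alpha=(ev_{x'}\otimes h)\alpha,$$
so the functional on $B$ attached to $x'$ by the reduced action agrees with the one attached to $x'$ by $\alpha$.

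Next I would simply translate this into a statement about orbits. By Definition~\ref{orbit} applied to the action $\alpha_r$ of $\mathcal{G}_r$ on $X$ (which is a genuine action, as noted right after the definition of $\mathcal{G}_r$), the orbit of $x$ under $\alpha_r$ is $\{x'\in X\mid (ev_x\otimes h_r)\alpha_r=(ev_{x'}\otimes h_r)\alpha_r\}$. By the displayed identity this set equals $\{x'\in X\mid (ev_x\otimes h)\alpha=(ev_{x'}\otimes h)\alpha\}$, which is exactly ${\rm Orb}_x$ computed with respect to $\alpha$. Hence the two orbits coincide.

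There is essentially no obstacle here; the proposition is a direct consequence of the compatibility of the reduced action and the reduced Haar state with their originals, exactly as in the remark preceding Corollary~\ref{ergodic iff reduced ergodic} (where the same identity $(\mu\otimes h)\alpha=(\mu\otimes h_r)\alpha_r$ was used to match invariant states). The only point worth stating carefully is that one applies the identity to the two specific vector states $ev_x$ and $ev_{x'}$ and then invokes the definition of orbit verbatim.
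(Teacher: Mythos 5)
Your proof is correct and follows essentially the same route as the paper: both use $\alpha_r=(id\otimes\pi_r)\alpha$ together with $h=h_r\pi_r$ to see that $(ev_{x'}\otimes h_r)\alpha_r=(ev_{x'}\otimes h)\alpha$ for every $x'\in X$, and then invoke the definition of the orbit verbatim. Nothing is missing.
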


\begin{proof}
Two points $x$ and $y$ in $X$ are in the same orbit under the action $\alpha$ of $\mathcal{G}$ on $X$ if and only if
$(ev_x\otimes h)\alpha=(ev_y\otimes h)\alpha$. Also $x$ and $y$ are in the same orbit under the action $\alpha_r$ of $\mathcal{G}_r$ on $X$ if and only if $(ev_x\otimes h_r)\alpha_r=(ev_y\otimes h_r)\alpha_r$. Since $h_r\pi_r=h$, we have that
$$(ev_x\otimes h_r)\alpha_r=(ev_x\otimes h_r)(id\otimes\pi_r)\alpha=(ev_x\otimes h)\alpha.$$  Thus $(ev_x\otimes h_r)\alpha_r=(ev_y\otimes h_r)\alpha_r$ if and only if $(ev_x\otimes h)\alpha=(ev_y\otimes h)\alpha$. This completes the proof.
\end{proof}

A compact quantum group $\mathcal{G}$ is called \textbf{coamenable} if  its Haar measure $h$ is faithful and its counit $\varepsilon$ is bounded.

\begin{theorem}\label{coamenable implies minimal orbits}
 Suppose that a coamenable compact quantum group $\mathcal{G}$ acts on $X$ by $\alpha$. For any $\alpha$-invariant subset $Y$ of $X$,  we have that $$Y=\bigcup_{x\in Y}{\rm Orb}_x.$$
 Consequently, ${\rm Orb}_x$ is a minimal $\alpha$-invariant subset of $X$ for every $x\in X$.
\end{theorem}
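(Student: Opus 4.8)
The plan is to prove the inclusion $\bigcup_{x\in Y}{\rm Orb}_x\subseteq Y$ for every $\alpha$-invariant subset $Y$ (the reverse inclusion is trivial since $x\in{\rm Orb}_x$), and the place where coamenability enters is to locate each point $x$ inside ${\rm supp}\,\mu_x$, equivalently inside $\mathcal{M}_x$. First I would exploit the bounded counit: since $\varepsilon$ is bounded it extends to a contractive unital multiplicative functional on $A$, hence to a state, and the identity $(id\otimes\varepsilon)\alpha|_{\mathscr B}=id_{\mathscr B}$ extends, using the continuity of $\alpha$ and of the slice map $id\otimes\varepsilon$ together with the density of $\mathscr B$ in $B$, to $(id\otimes\varepsilon)\alpha=id_B$ on all of $B$. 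Applying $ev_x$ to this and using $(ev_x\otimes\varepsilon)\alpha=\varepsilon\circ(ev_x\otimes id)\alpha=\varepsilon\circ\alpha_x$, I obtain $\varepsilon\circ\alpha_x=ev_x$ for every $x\in X$. In particular $\ker\alpha_x\subseteq\ker ev_x$, which is exactly the assertion $x\in\mathcal{M}_x$.

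Next, since $\mathcal{G}$ is coamenable its Haar measure $h$ is faithful, so Theorem~\ref{support is minimal} gives both ${\rm supp}\,\mu_x=\mathcal{M}_x$ and the chain ${\rm supp}\,\mu_x\subseteq\mathcal{M}_x\subseteq{\rm Orb}_x$; combined with $x\in\mathcal{M}_x$ from the previous step, this shows $x\in{\rm supp}\,\mu_x$ for every $x$. Now take $x'\in{\rm Orb}_x$; by the definition of the orbit, $\mu_{x'}=(ev_{x'}\otimes h)\alpha=(ev_x\otimes h)\alpha=\mu_x$, hence ${\rm supp}\,\mu_{x'}={\rm supp}\,\mu_x$, and since $x'\in{\rm supp}\,\mu_{x'}$ by the same argument applied to $x'$, we get $x'\in{\rm supp}\,\mu_x$. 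Therefore ${\rm Orb}_x\subseteq{\rm supp}\,\mu_x$, and together with ${\rm supp}\,\mu_x\subseteq\mathcal{M}_x\subseteq{\rm Orb}_x$ this forces ${\rm Orb}_x=\mathcal{M}_x={\rm supp}\,\mu_x$ for every $x\in X$.

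Finally, if $Y$ is $\alpha$-invariant and $x\in Y$, then Proposition~\ref{minimal invariant space is mx} gives $\mathcal{M}_x\subseteq Y$, so ${\rm Orb}_x=\mathcal{M}_x\subseteq Y$; taking the union over $x\in Y$ yields $\bigcup_{x\in Y}{\rm Orb}_x\subseteq Y$, and hence the asserted equality. For the consequence, ${\rm Orb}_x={\rm supp}\,\mu_x$ is a minimal $\alpha$-invariant subset by Theorem~\ref{support is minimal}. I expect the main obstacle to be the very first step --- pinning down precisely how coamenability (in effect, the bounded counit) forces $x\in\mathcal{M}_x$; once that is established everything else is bookkeeping with inclusions already proved, and the only technical point to verify carefully is that the continuous extension of $\varepsilon$ to $A$ is a genuine state, so that the support machinery underlying Theorem~\ref{support is minimal} applies.
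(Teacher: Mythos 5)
Your argument is correct, but it is organized differently from the paper's. The paper proves the inclusion ${\rm Orb}_x\subseteq Y$ directly at the level of functions: for a positive $f$ with $f|_Y=0$, invariance gives $(ev_x\otimes id)\alpha(f)=0$, the orbit relation plus faithfulness of $h$ gives $(ev_y\otimes id)\alpha(f)=0$ for $y\in{\rm Orb}_x$, and the bounded counit then yields $f(y)=\varepsilon\bigl((ev_y\otimes id)\alpha(f)\bigr)=0$, whence ${\rm Orb}_x\subseteq Y$ by closedness; minimality of orbits is then read off from this inclusion. You instead first prove the sharper structural identity ${\rm Orb}_x=\mathcal{M}_x={\rm supp}\,\mu_x$: the bounded counit gives $(id\otimes\varepsilon)\alpha=id_B$ and hence $x\in\mathcal{M}_x$, faithfulness of $h$ gives ${\rm supp}\,\mu_x=\mathcal{M}_x$ via Theorem~\ref{support is minimal}, and constancy of $\mu_{x'}$ along the orbit forces ${\rm Orb}_x\subseteq{\rm supp}\,\mu_x$; the theorem then follows from Proposition~\ref{minimal invariant space is mx} and the minimality statement in Theorem~\ref{support is minimal}. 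The two proofs use the same two coamenability inputs (faithful Haar state, bounded counit), but your packaging buys something extra: it establishes Conjecture~\ref{orbit minimal} explicitly in the coamenable case as an intermediate step, whereas the paper's direct argument is more self-contained and avoids invoking the earlier propositions. One small correction to your closing remark: the state property of the extended counit is not what makes the support machinery of Theorem~\ref{support is minimal} work (that theorem only needs faithfulness of $h$); all your argument requires of $\varepsilon$ is that it be a bounded functional with $(id\otimes\varepsilon)\alpha|_{\mathscr{B}}=id_{\mathscr{B}}$, so that the slice map extends by density — positivity of $\varepsilon$ is a pleasant but unnecessary bonus.
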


\begin{proof}
The second assertion follows directly from the first.
The first assertion is equivalent to  that if $Y$ is an $\alpha$-invariant subset of $X$ and $x$ is in $Y$, then  ${\rm Orb}_x\subseteq Y$. 

Suppose that  $f$ is a positive  continuous function  on $X$ and $f|_{Y}=0$. Since $Y$ is $\alpha$-invariant, we have that $(ev_x\otimes id)\alpha(f)=0$ for all $x\in Y$. For every $y$ in ${\rm Orb}_x$, since
$(ev_x\otimes h)\alpha=(ev_y\otimes h)\alpha$, we obtain that $0=(ev_x\otimes h)\alpha(f)=(ev_y\otimes h)\alpha(f)$. Since $h$ is faithful and $f$ is positive, we have that $(ev_y\otimes id)\alpha(f)=0$. The counit $\varepsilon$ is bounded, hence $(id\otimes\varepsilon)\alpha(g)=g$ for all $g\in B$. So
$$0=\varepsilon((ev_y\otimes id)\alpha(f))=ev_y((id\otimes\varepsilon)\alpha(f))=f(y).$$ Thus we get that if $f\geq 0$ and $f|_Y=0$, then $f(y)=0$ for any $y\in {\rm Orb}_x$. Note that both  ${\rm Orb}_x$ and $Y$ are closed subsets of $X$. Therefore ${\rm Orb}_x\subseteq Y$ for any $x\in Y$.
\end{proof}

\subsection{Actions on finite spaces}

Throughout this subsection, we consider  a compact quantum group action $\alpha$ on a finite space $X_n=\{x_1,x_2,...,x_n\}$ with $n$ points. Let $a_{ij}=(ev_i\otimes id)\alpha(e_j)$  where $e_j$ is the characteristic function of $\{x_j\}$ and $ev_i$ is the evaluation functional on $B=C(X_n)$ at the point $x_i$ for $1\leq i,j\leq n$.

The main result in this section is the following characterization of ergodic actions on finite spaces.
\begin{theorem}\label{erdodic action on finite spaces}
Consider a compact quantum group action $\alpha$ of $\mathcal{G}$ on $B$. The following are equivalent.
\begin{enumerate}
\item The action $\alpha$ is ergodic.
\item $h(a_{ij})=\frac{1}{n}$ for the Haar measure $h$ of $\mathcal{G}$ and $1\leq i,j\leq n$.
\item All $a_{ij}$'s are nonzero for $1\leq i,j\leq n$.
\item Every $\alpha$-invariant state $\psi$ of $B$ satisfies $\psi(e_i)=\frac{1}{n}$ for all $1\leq i\leq n$.
\end{enumerate}
\end{theorem}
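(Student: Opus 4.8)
My plan is to first record the structure of the elements $a_{ij}$, then prove the four conditions equivalent by closing a cycle $(1)\Rightarrow(2)\Rightarrow(3)\Rightarrow(1)$ together with $(2)\Leftrightarrow(4)$. Since $B\otimes A\cong A^{\oplus n}$, one automatically has $\alpha(e_j)=\sum_i e_i\otimes a_{ij}$; because $\alpha$ is a unital $*$-homomorphism and $\{e_j\}_j$ is a partition of $1_B$ into minimal projections, each $a_{ij}$ is a projection, $a_{ik}a_{ij}=\delta_{jk}a_{ik}$, and $\sum_j a_{ij}=1_A$ for each $i$. From $(id\otimes\varepsilon)\alpha|_{\mathscr{B}}=id_{\mathscr{B}}$ with $\mathscr{B}=B$ I get that $\alpha$ is injective and $\varepsilon(a_{ij})=\delta_{ij}$; from $\alpha(\mathscr{B})\subseteq\mathscr{B}\odot\mathscr{A}$ that $a_{ij}\in\mathscr{A}$, so $h$ is faithful on the positive elements $a_{ij}$; and slicing coassociativity gives $\Delta(a_{ij})=\sum_k a_{ik}\otimes a_{kj}$. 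I will also use $Inv_{\alpha}=\{(\varphi\otimes h)\alpha\mid\varphi\in S(B)\}$ together with $(\varphi\otimes h)\alpha(e_i)=\sum_k\varphi(e_k)\,h(a_{ki})$.

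The three formal implications are quick. For $(2)\Leftrightarrow(4)$: if $h(a_{ki})=\tfrac1n$ for all $k,i$ then $(\varphi\otimes h)\alpha(e_i)=\tfrac1n\sum_k\varphi(e_k)=\tfrac1n$ for every $\varphi\in S(B)$, while inserting $\varphi=ev_k$ into (4) recovers $h(a_{ki})=\tfrac1n$. For $(2)\Rightarrow(3)$: $h(a_{ij})=\tfrac1n\neq0$ forces $a_{ij}\neq0$. For $(3)\Rightarrow(1)$: if $f=\sum_j c_je_j\in B^{\alpha}$, then comparing the $i$-th components of $\alpha(f)=f\otimes1_A$ gives $\sum_j c_ja_{ij}=c_i1_A$, and left-multiplying by $a_{ik}$ (using $a_{ik}a_{ij}=\delta_{jk}a_{ik}$) yields $(c_k-c_i)a_{ik}=0$, whence $c_k=c_i$ since $a_{ik}\neq0$; so $f\in\Complex 1_B$ and $\alpha$ is ergodic.

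The substantive step is $(1)\Rightarrow(2)$. By Theorem~\ref{ergodic and unique invariant measure} there is a unique invariant state $\psi$, and since each $(ev_i\otimes h)\alpha$ is invariant we get $\psi=(ev_i\otimes h)\alpha$ for all $i$, so $w_j:=\psi(e_j)=h(a_{ij})$ is independent of $i$. If some $w_{j_0}=0$, then $h(a_{ij_0})=0$ for all $i$, hence $a_{ij_0}=0$ for all $i$ by faithfulness of $h$ on $\mathscr{A}$ and positivity, so $\alpha(e_{j_0})=0$, contradicting injectivity of $\alpha$; thus every $w_j>0$. Then $\psi$ is faithful on $B$, the GNS space $H_{\psi}$ is $\Complex^n$ with orthonormal basis $f_i=e_i/\sqrt{w_i}$, and the unitary representation $U$ of $\mathcal{G}$ on $H_{\psi}$ from Lemma~\ref{lemma of invariant support}, defined by $U(b\otimes a)=\alpha(b)(1\otimes a)$, satisfies $U(f_j\otimes1)=\alpha(f_j)=\sum_i f_i\otimes\sqrt{w_i/w_j}\,a_{ij}$, i.e.\ has matrix $U_{ij}=\sqrt{w_i/w_j}\,a_{ij}$. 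Reading off the $(i,i)$ entry of $UU^{*}=1$ and using $a_{ik}^{*}=a_{ik}=a_{ik}^{2}$ gives $w_i\sum_k w_k^{-1}a_{ik}=1_A$; applying $h$ and substituting $h(a_{ik})=w_k$ gives $n\,w_i=1$, that is $h(a_{ij})=w_j=\tfrac1n$.

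The only non-formal point is exactly this last one: ergodicity alone yields merely that $h(a_{ij})$ is independent of $i$, and to identify the value as $\tfrac1n$ one must exploit that $(a_{ij})$ is, up to the diagonal rescaling $\sqrt{w_i/w_j}$, a unitary matrix over $A$ — equivalently, that the corepresentation on $H_{\psi}$ is unitary. This is precisely what Lemma~\ref{lemma of invariant support} supplies, so the magic-unitary/quantum-permutation-group machinery is not needed separately.
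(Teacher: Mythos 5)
Your proposal is correct, and its overall architecture (the cycle $(1)\Rightarrow(2)\Rightarrow(3)\Rightarrow(1)$ plus $(2)\Leftrightarrow(4)$, the use of $\mathscr{B}=B$ to get $a_{ij}\in\mathscr{A}$ and injectivity of $\alpha$, and the exclusion of $h(a_{ij})=0$ via faithfulness of $h$ on $\mathscr{A}$) matches the paper's; but the decisive part of $(1)\Rightarrow(2)$ is argued by a genuinely different route. The paper, after using ergodicity (via orbits, i.e.\ uniqueness of the invariant state) to get row-independence $h(a_{ik})=h(a_{jk})$, obtains column-independence from right-invariance of the Haar state: it applies $(h\otimes\omega_{lj})\Delta=h$ to $a_{ij}$ with $\Delta(a_{ij})=\sum_k a_{ik}\otimes a_{kj}$ and states $\omega_{lj}$ chosen to take the value $1$ on the nonzero projections $a_{lj}$, yielding $h(a_{ij})\geq h(a_{il})$ and hence equality, after which $\sum_j a_{ij}=1$ gives $\tfrac1n$. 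You instead pass to the GNS space of the unique invariant state $\psi$ (faithful once all $w_j>0$ is known), invoke the unitarity of $U(b\otimes a)=\alpha(b)(1\otimes a)$ supplied by Lemma~\ref{lemma of invariant support}, identify the matrix of $U$ as $\bigl(\sqrt{w_i/w_j}\,a_{ij}\bigr)$, and read $n\,w_i=1$ off the diagonal of $UU^{*}=1$ after applying $h$; this trades the paper's elementary but slightly delicate state-picking argument for a structural one, and as a bonus makes explicit that the rescaled coefficient matrix is unitary (so with $w_i=\tfrac1n$ one recovers the magic-unitary picture), at the cost of leaning on Lemma~\ref{lemma of invariant support} (hence Boca's lemma). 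Your $(3)\Rightarrow(1)$ via left-multiplication by $a_{ik}$ and the row-orthogonality $a_{ik}a_{ij}=\delta_{jk}a_{ik}$ is also a small simplification of the paper's argument, which extracts the same conclusion by evaluating against states peaking at $a_{kl}$; the remaining implications coincide with the paper's.
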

\begin{proof}
We prove this theorem by showing that (1)$\Rightarrow$ (2)$\Rightarrow$(3)$\Rightarrow$(1) and (2)$\Leftrightarrow$(4).

(1)$\Rightarrow$ (2). If $\alpha$ is ergodic, then by Proposition~\ref{a characterization of ergodic action by orbit}, any two points $x_i$ and $x_j$ in $X_n$ are in the same orbit. That is to say $(ev_i\otimes h)\alpha=(ev_j\otimes h)\alpha$. Hence
$(ev_i\otimes h)\alpha(e_k)=(ev_j\otimes h)\alpha(e_k)$ for $1\leq i,j,k\leq n$. Therefore
\begin{equation}\label{eqh}
h(a_{ik})=h(a_{jk})
\end{equation}
for $1\leq i,j,k\leq n$.

 We first show  that $h(a_{ij})$ is nonzero for all $1\leq i,j\leq n$. Suppose not, then there exist $1\leq i,k\leq n$ such that $h(a_{ik})=0$.
 By equation~\eqref{eqh}, we have that $h(a_{jk})=0$ for all $1\leq j\leq n$. Let $\mu$ be the unique invariant state on $B$.

 Note that $B$ is finite dimensional. So $\mathscr{B}=B$ and $\alpha$ is injective on $B$. Hence $a_{jk}=(ev_j\otimes id)\alpha(e_k)\in \mathscr{A}$ for all $1\leq j\leq n$. Since $h$ is faithful on $\mathscr{A}$ and $a_{jk}^*=a_{jk}=a_{jk}^2$, we have that $a_{jk}=0$ for  all $1\leq j\leq n$. Thus $\alpha(e_k)(j)=(ev_j\otimes id)\alpha(e_k)=a_{jk}=0$ for all $1\leq j\leq n$. Therefore $\alpha(e_k)=0$. This is a contradiction to the injectivity of $\alpha$.

Now $h(a_{ij})>0$ for all $1\leq i,j\leq n$. 
If we can prove  $h(a_{ij})=h(a_{ik})$ for all $1\leq i,j,k\leq n$, then combining this  with  $\sum_{j=1}^n a_{ij}=1$, we get that $h(a_{ij})=\frac{1}{n}$ for $1\leq i,j\leq n$.

By the invariance of the Haar measure we have that $(h\otimes\omega)\Delta=h$ for every state $\omega$ on $A$. Apply this to $a_{ij}$. Since $\Delta(a_{ij})=\sum_{k=1}^n a_{ik}\otimes a_{kj}$, we have that
$$\sum_{k=1}^n h(a_{ik})\omega(a_{kj})=h(a_{ij}).$$
Since $h(a_{ij})>0$ for all $1\leq i,j\leq n$, every $a_{ij}$ is a nonzero projection in $A$. For any $1\leq l,j\leq n$, there exists a state $\omega_{lj}$ on $A$ such that $\omega_{lj}(a_{lj})=1$. It follows that
$$h(a_{ij})=\sum_{k=1}^n h(a_{ik})\omega_{lj}(a_{kj})=h(a_{il})+\sum_{k\neq l}h(a_{ik})\omega_{lj}(a_{kj}).$$
Hence for all $1\leq i,j,l\leq n$, $h(a_{ij})\geq h(a_{il})$, and symmetrically, $h(a_{il})\geq h(a_{ij})$. So $h(a_{ij})=h(a_{ik})$ for any $1\leq i,j,k\leq n$.

(2)$\Rightarrow$(3). This is trivial.

(3)$\Rightarrow$(1). Suppose that $\alpha(f)=f\otimes 1$ for some $f=\sum_{i=1}^n f_ie_i$ with $f_i\in\Complex$. Since
$\alpha(f)=\sum_{i=1}^n \sum_{k=1}^n f_i e_k\otimes a_{ki}$ and $f\otimes 1=\sum_{k=1}^n f_ke_k\otimes 1$, we get
$\sum_{i=1}^n f_ia_{ki}=f_k$ for every $1\leq k\leq n$. Since $a_{ki}$ is a nonzero projection in $A$ for all $1\leq k,l\leq n$, there exists a state $\omega_{kl}$ on $A$ such that $\omega_{kl}(a_{kl})=1$. Note that $\sum_{j=1}^n a_{kj}=1$. So $\omega_{kl}(a_{kj})=\delta_{lj}$ for all $1\leq j\leq n$. Applying $\omega_{kl}$ to both sides of $\sum_{i=1}^n f_ia_{ki}=f_k$, we get $f_l=f_k$ for all $1\leq k,l\leq n$. Therefore $f$ is a constant function on $X_n$ and the action $\alpha$ is ergodic.

(2)$\Rightarrow$(4). Every invariant measure $\psi$ can be written as $\psi=(\phi\otimes h)\alpha$ for some state $\phi$ on $B$. Hence
$$\psi(e_i)=(\phi\otimes h)\alpha(e_i)=\sum_{k=1}^n \phi(e_k)h(a_{ki})=\frac{1}{n}\sum_{k=1}^n \phi(e_k)=\frac{1}{n}.$$

(4)$\Rightarrow$(2). For every state $\phi$ on $B$, the state $(\phi\otimes h)\alpha$ is invariant. Especially choose $\phi=ev_j$. Then
$$\frac{1}{n}=(ev_j\otimes h)\alpha(e_i)=h((ev_j\otimes id)\alpha(e_i))=h(a_{ji}).$$
\end{proof}

\begin{proposition}\label{orbits in finite spaces}
Suppose that a compact quantum group $\mathcal{G}$ acts on $X_n$ by $\alpha$. Then two points $x_i$ and $x_j$ in $X_n$ are in the same orbit if and only if  $a_{ij}\neq 0$. If $a_{ij}\neq 0$, then $h(a_{ij})=\frac{1}{m_i}$ where $m_i$ is the cardinality of ${\rm Orb}_{x_i}$.

\end{proposition}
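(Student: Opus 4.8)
The plan is to transport everything to the orbit of $x_i$ and invoke Theorem~\ref{erdodic action on finite spaces} there. Fix $x_i$, write ${\rm Orb}_{x_i}=\{x_l : l\in L_i\}$ with $m_i=|L_i|$ and $i\in L_i$, and let $J_{x_i}=\{f\in B : f|_{{\rm Orb}_{x_i}}=0\}$ with quotient map $\pi_{x_i}:B\to B/J_{x_i}\cong C({\rm Orb}_{x_i})$ and induced action $\delta_{x_i}$. By Theorem~\ref{invariance of orbit} the set ${\rm Orb}_{x_i}$ is $\alpha$-invariant, so $\delta_{x_i}$ is defined, and by Lemma~\ref{induced action on orbit is ergodic} it is ergodic; moreover ${\rm Orb}_{x_i}$ is a finite space with $m_i$ points.

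First I would check that the matrix coefficients of $\delta_{x_i}$ on the $m_i$-point space ${\rm Orb}_{x_i}$ are exactly the $a_{lk}$ with $l,k\in L_i$: writing $\widetilde{ev}_l$ for evaluation at $x_l$ on $B/J_{x_i}$, so that $\widetilde{ev}_l\pi_{x_i}=ev_l$, one has $(\widetilde{ev}_l\otimes id)\delta_{x_i}(\pi_{x_i}(e_k))=(\widetilde{ev}_l\pi_{x_i}\otimes id)\alpha(e_k)=(ev_l\otimes id)\alpha(e_k)=a_{lk}$ for $l,k\in L_i$. Then Theorem~\ref{erdodic action on finite spaces}, applied to the ergodic action $\delta_{x_i}$, gives at once (via conditions (2) and (3)) that $a_{lk}\neq 0$ and $h(a_{lk})=1/m_i$ for all $l,k\in L_i$. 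In particular, if $x_j\in{\rm Orb}_{x_i}$ then $a_{ij}\neq 0$ and $h(a_{ij})=1/m_i$, which disposes of the ``if'' direction together with the stated formula.

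For the ``only if'' direction I would argue contrapositively. If $x_j\notin{\rm Orb}_{x_i}$, then $e_j$ vanishes on ${\rm Orb}_{x_i}$, i.e. $e_j\in J_{x_i}$; since $J_{x_i}$ is $\alpha$-invariant we get $0=(\pi_{x_i}\otimes id)\alpha(e_j)=\sum_{l\in L_i}\pi_{x_i}(e_l)\otimes a_{lj}$, because $\pi_{x_i}(e_l)=0$ for $l\notin L_i$. As the functions $\pi_{x_i}(e_l)$, $l\in L_i$, are linearly independent in $C({\rm Orb}_{x_i})$, this forces $a_{lj}=0$ for every $l\in L_i$, and taking $l=i$ gives $a_{ij}=0$. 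Combining the two directions yields the equivalence, and the value $h(a_{ij})=1/m_i$ whenever $a_{ij}\neq 0$. I do not foresee a genuine obstacle; the only point requiring care is the identification of $C({\rm Orb}_{x_i})$ with $B/J_{x_i}$ and the verification that the coefficients of the induced action $\delta_{x_i}$ are literally the $a_{lk}$ with $l,k$ running over the orbit, so that Theorem~\ref{erdodic action on finite spaces} can be applied verbatim.
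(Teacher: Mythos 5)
Your proposal is correct and follows essentially the same route as the paper: restrict to ${\rm Orb}_{x_i}$, note the induced action is ergodic, identify its coefficients with the $a_{lk}$ ($l,k$ in the orbit), and apply Theorem~\ref{erdodic action on finite spaces} to get $h(a_{ij})=1/m_i$ (hence $a_{ij}\neq 0$), while the converse uses the $\alpha$-invariance of the orbit. The only cosmetic difference is that for the converse the paper reads $a_{ij}=(ev_i\otimes id)\alpha(e_j)=0$ directly from Proposition~\ref{characterization2 of invariant subset}, whereas you pass through the quotient and a linear-independence argument; both are valid.
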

\begin{proof}
Suppose that $a_{ij}\neq 0$. If $x_j$ is not in ${\rm Orb}_{x_i}$, then $e_j|_{{\rm Orb}_{x_i}}=0$. Note that ${\rm Orb}_{x_i}$ is an $\alpha$-invariant subset of $X_n$. Thus $a_{ij}=(ev_i\otimes id)\alpha(e_j)=0$, which is a contradiction. This proves the sufficiency.

On the other hand, suppose that $x_j\in {\rm Orb}_{x_i}$. Let $J_i=\{f\in B|\,\,f|_{{\rm Orb}_{x_i}}=0\}$ be the ideal of continuous functions varnishing on ${\rm Orb}_{x_i}$ and $\pi_i$ be the quotient map from $B$ onto $B/J_i$. Denote the induced action on ${\rm Orb}_{x_i}$ by $\alpha_i$, which is defined by $\alpha_i(f+J_i)=(\pi_i\otimes id)\alpha(f)$ for $f\in B$. Let $\widetilde{ev_j}$ be the evaluation functional on $B/J_i$ at $x_j$. Denote $(\widetilde{ev_i}\otimes id)\alpha_i(e_j)$ by $\widetilde{a_{ij}}$. Then $\widetilde{a_{ij}}=a_{ij}$ by the definition of $\alpha_i$.
By Theorem~\ref{erdodic action on finite spaces}, the action $\alpha_i$ on ${\rm Orb}_{x_i}$ is ergodic. Hence $h(a_{ij})=h(\widetilde{a_{ij}})=\frac{1}{m_i}$ where $m_i$ is the cardinality of ${\rm Orb}_{x_i}$. This proves the necessity.
\end{proof}

\begin{corollary}\label{finite is minimal}
Suppose that a compact quantum group $\A$~acts on $X_n$ ergodically. Then ${\rm supp}\mu_x=\mathcal{M}_x={\rm Orb}_x=X_n$ for every $x\in X_n$, that is, every orbit ${\rm Orb}_x$ is a minimal invariant subset.
\end{corollary}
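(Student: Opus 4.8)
The plan is to assemble the conclusion directly from the structural results already established for ergodic actions and for finite spaces. First, since $\alpha$ is ergodic, Theorem~\ref{a characterization of ergodic action by orbit} immediately gives ${\rm Orb}_x=X_n$ for every $x\in X_n$. By Theorem~\ref{support is minimal} we always have the chain of inclusions ${\rm supp}\mu_x\subseteq\mathcal{M}_x\subseteq{\rm Orb}_x$, so it suffices to prove ${\rm supp}\mu_x=X_n$; this will force all three sets to coincide with $X_n$.

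For the support computation I would write $x=x_i$ and observe that $\mu_{x_i}(e_j)=(ev_i\otimes h)\alpha(e_j)=h(a_{ij})$. Since $\alpha$ is ergodic, part (2) of Theorem~\ref{erdodic action on finite spaces} yields $h(a_{ij})=\frac1n>0$ for all $1\leq i,j\leq n$. Because $B=C(X_n)\cong\Complex^{\,n}$, every state $\mu$ is determined by the numbers $\mu(e_j)$ and has ${\rm supp}\mu=\{x_j\mid \mu(e_j)>0\}$; hence $\mu_{x_i}(e_j)>0$ for all $j$ forces ${\rm supp}\mu_{x_i}=X_n$. (Alternatively one could invoke Proposition~\ref{orbits in finite spaces}, which says $a_{ij}\neq 0$ for all $i,j$, but the state computation via Theorem~\ref{erdodic action on finite spaces} is the cleaner route to the support.) Combining the two steps gives ${\rm supp}\mu_x=\mathcal{M}_x={\rm Orb}_x=X_n$ for every $x\in X_n$. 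Finally, Theorem~\ref{support is minimal} asserts that ${\rm supp}\mu_x$ is a minimal $\alpha$-invariant subset of $X_n$; since this set equals ${\rm Orb}_x$, every orbit is a minimal invariant subset, which is the last assertion.

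I do not expect a genuine obstacle here: the corollary simply packages Theorem~\ref{a characterization of ergodic action by orbit}, Theorem~\ref{support is minimal} and Theorem~\ref{erdodic action on finite spaces}. The only point needing a moment's care is the elementary remark that for a state on the finite-dimensional commutative algebra $C(X_n)$ the support is exactly the set of atoms of positive mass, so that strict positivity of $h(a_{ij})$ in $j$ is equivalent to ${\rm supp}\mu_{x_i}$ being all of $X_n$.
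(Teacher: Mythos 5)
Your proposal is correct and follows essentially the same route as the paper: both reduce, via the inclusions ${\rm supp}\mu_x\subseteq\mathcal{M}_x\subseteq{\rm Orb}_x$ from Theorem~\ref{support is minimal}, to showing that $\mu_x$ has full support, which the paper does by checking $\mu_x(f)=\frac{1}{n}\sum_i f_i$ for positive $f=\sum_i f_ie_i$ and you do by checking $\mu_{x_i}(e_j)=h(a_{ij})=\frac{1}{n}>0$ — the same computation using Theorem~\ref{erdodic action on finite spaces}(2). Your extra appeal to Theorem~\ref{a characterization of ergodic action by orbit} for ${\rm Orb}_x=X_n$ is harmless but not needed, since full support already forces the whole chain to equal $X_n$.
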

\begin{proof}
Note that ${\rm supp}\mu_x\subseteq\mathcal{M}_x\subseteq{\rm Orb}_x$ for every $x\in X$ by Theorem~\ref{support is minimal}. So it suffices to show that if a positive $f$ in $B$ satisfies that $\mu_x(f)=0$ for some $x\in X_n$, then $f=0$. Write $f$ as $\sum_{i=1}^n f_ie_i$ with $0\leq f_i\in\Complex$ and let $x=x_k$ for some $1\leq k\leq n$. Then by Theorem~\ref{erdodic action on finite spaces}, we have $$0=\mu_{x}(f)=(ev_k\otimes h)\alpha(f)=\sum_{i=1}^n f_i(ev_k\otimes h)\alpha(e_i)=\sum_{i=1}^n f_i h(a_{ki})=\frac{1}{n}\sum_{i=1}^n f_i.$$  Hence $f_i=0$ for all $1\leq i\leq n$ and $f=0$.
\end{proof}

\subsection{Non-atomic invariant measures}

Our main theorem in this subsection is the following.
\begin{theorem}\label{nonatomic inv measure}
If a compact quantum group $\A$ acts ergodically by $\alpha$ on a compact Hausdorff space $X$ with infinitely many points, then the unique $\alpha$-invariant measure $\mu$ of $X$ is non-atomic. That is, every point of $X$ has zero $\mu$-measure.
\end{theorem}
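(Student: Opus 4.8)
The plan is to argue by contradiction. Suppose $\mu$ is the unique $\alpha$-invariant measure on $X$ and that some point $x_0 \in X$ has $\mu(\{x_0\}) > 0$. Set $c = \mu(\{x_0\})$. The first step is to show that the set $Y = \{x \in X \mid \mu(\{x\}) = c\}$ of atoms of maximal mass is nonempty, finite, and $\alpha$-invariant. Finiteness is immediate since $\mu$ is a probability measure, so $Y$ has at most $\lfloor 1/c \rfloor$ points; say $|Y| = m$. For invariance, I would use the homogeneity coming from ergodicity: by Theorem~\ref{a characterization of ergodic action by orbit} every orbit is all of $X$, and more concretely $(id \otimes h)\alpha(f)$ is the constant function $\mu(f) \cdot 1$ for every $f \in B$. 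The mass function $x \mapsto \mu(\{x\})$ should be seen to be constant along the ``quantum orbit'' structure; I expect one shows $\mu(\{x\}) = \mu(\{x'\})$ whenever $x, x'$ are related by the action, via an argument comparing $\mu = (ev_x \otimes h)\alpha = (ev_{x'} \otimes h)\alpha$ applied to the characteristic functions. The cleanest route is probably: since $\mu$ is invariant, $\mu = (ev_{x_0} \otimes h)\alpha$, so for $e$ the characteristic function of $\{x_0\}$ we have $c = \mu(\{x_0\}) = (ev_{x_0} \otimes h)\alpha(e) = h(\alpha(e)(x_0))$, and $\alpha(e)(x_0)$ is a positive element of $A$ of norm $\le 1$; pushing this through the finite-dimensional subalgebra generated by $\{ \alpha(e)(x) : x \in Y\}$ forces $Y$ to be invariant.

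The second step is to restrict the action to the finite invariant subset $Y$. By Proposition~\ref{inv subset is ergodic} (or rather the remark that an invariant state restricts), $\mu$ restricted to $Y$ is invariant for the induced action $\alpha_Y$ on $C(Y)$, and by uniqueness of the invariant state together with Proposition~\ref{inv subset is ergodic} the induced action $\alpha_Y$ is itself ergodic (here I use that the whole action is ergodic, so any invariant subset carries an ergodic induced action). Now apply Theorem~\ref{erdodic action on finite spaces}: for an ergodic action on the $m$-point space $Y$, every invariant state $\psi$ satisfies $\psi(e_i) = 1/m$. In particular the restriction of $\mu$ to $Y$ assigns mass $1/m$ to each point of $Y$, so $c = \mu(\{x_0\}) = 1/m$. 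But $\mu(Y) = m \cdot (1/m) = 1$, so $\mu$ is entirely supported on the finite set $Y$, forcing $\mathrm{supp}\,\mu = Y$. Then $\mathrm{supp}\,\mu$ is a finite invariant set and by Proposition~\ref{support of invariant measure is invariant} (and Proposition~\ref{inv subset is ergodic}) the induced action on this finite set is ergodic, so by Corollary~\ref{finite is minimal} we get $\mathrm{Orb}_x = \mathrm{supp}\,\mu$ for $x \in \mathrm{supp}\,\mu$; but also $\mathrm{Orb}_x = X$ for every $x \in X$ by Theorem~\ref{a characterization of ergodic action by orbit}, so $X = \mathrm{supp}\,\mu = Y$ is finite, contradicting the hypothesis that $X$ has infinitely many points.

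I expect the main obstacle to be the first step: rigorously showing that the set $Y$ of maximal-mass atoms is $\alpha$-invariant. The classical statement (an orbit of an atom consists of atoms of equal mass) is transparent for honest group actions, but in the quantum setting ``$x$ and $x'$ in the same orbit'' only means $(ev_x \otimes h)\alpha = (ev_{x'} \otimes h)\alpha$, and one must extract from this equality of states the equality of point masses and then closedness/invariance of $Y$ as a subset. The key identity to exploit is that for $f$ the characteristic function of $Y$ (a genuine continuous function since $Y$ is finite hence clopen), invariance gives $\mu(f) = (ev_x \otimes h)\alpha(f) = h(\alpha(f)(x))$ for every $x$, and $\alpha(f)(x)$ is a projection in $A$; combined with the fact that $\mu(f) = |Y| \cdot c > 0$ this should pin down that $\alpha(f)(x) \ne 0$ only for $x \in Y$, i.e.\ that $J_Y$ is $\alpha$-invariant, via the characterization in Proposition~\ref{characterization2 of invariant subset}. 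Once invariance of $Y$ is in hand, the rest is a direct application of the finite-space theorem and should be routine.
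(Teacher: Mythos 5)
Your overall plan (concentrate the atoms of maximal mass into a finite set $E_1=Y$ and then derive a contradiction) is in the same spirit as the paper, but both of your key steps have genuine gaps. First, the invariance of $Y$ is never actually proved, and the route you sketch rests on a false claim: a finite subset of a compact Hausdorff space is closed but in general \emph{not} clopen — the maximal atoms may perfectly well be accumulation points (think of the one–point compactification of $\mathbb{N}$ with positive mass at the point at infinity) — so the characteristic functions of $Y$ and of $\{x_0\}$ need not belong to $C(X)$, and expressions such as $\alpha(e)(x_0)$ for $e=\chi_{\{x_0\}}$ are not defined. This is exactly where the paper does its real work: after passing to the reduced quantum group so that $h$ is faithful (Corollary~\ref{ergodic iff reduced ergodic}), it proves Lemma~\ref{natm1} (if $\mu_x>\mu_y$, then $(ev_x\otimes id)\alpha$ annihilates every continuous function supported near $y$; the proof uses invariance of $\mu$ and the power trick $f^n$) and then Lemma~\ref{natm2} (a partition-of-unity argument combined with ergodicity and faithfulness of $h$), obtaining the much stronger statement that $\alpha(f)=0$ whenever $f|_{E_1}=0$. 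Nothing in your sketch substitutes for this analysis, and you never invoke faithfulness of $h$ or the reduction to $\mathcal{G}_r$, which the argument needs at this point.

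Second, the closing contradiction does not go through as stated. Corollary~\ref{finite is minimal}, applied to the induced action on the finite invariant set, only says that the orbit \emph{under the induced action} is the whole finite set; by Proposition~\ref{Inv subset's orbit} that orbit is $Y\cap{\rm Orb}_x$, so you merely learn $Y\subseteq{\rm Orb}_x$, which is perfectly consistent with ${\rm Orb}_x=X$ from Theorem~\ref{a characterization of ergodic action by orbit} and yields no contradiction; your conclusion $X={\rm supp}\,\mu$ does not follow. The genuinely hard point is precisely to rule out that the unique invariant measure of an ergodic action on an infinite $X$ is supported on a finite set: the paper does this from Lemma~\ref{natm2} by a dimension argument — $\alpha$ is injective on the infinite-dimensional Podle\'s algebra (via the counit), yet $\alpha(f)$ depends only on the restriction $f|_{E_1}$, i.e.\ factors through $\mathbb{C}^n$; alternatively, with $h$ faithful one can finish via ${\rm supp}\,\mu=\mathcal{M}_x$ (Theorem~\ref{support is minimal}) together with Proposition~\ref{Xalpha is invariant and infinite}. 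Some argument of this kind is indispensable and is missing from your proposal. (By contrast, your intermediate step is fine: once invariance of $Y$ is known, ergodicity of the induced action, Theorem~\ref{erdodic action on finite spaces} and Lemma~\ref{invariance of pull back} do force $\mu$ to be the uniform measure on $Y$.)
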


Denote $C(X)$ by $B$. For $y\in X$, denote by $e_y$  the characteristic function of  $\{y\}$.  For a compact quantum group  action $\alpha: B\rightarrow B\otimes A$, we use $ev_x$  to denote the evaluation functional on $B$ at a point $x\in X$.

Take a regular Borel probability measure $\mu$ on $X$.  Denote  $\mu(\{x\})$ by $\mu_x$ and  define a linear functional $\nu_x$ on $B$ by $\nu_x(f)=f(x)\mu_x$ for all $f\in B$. With abuse of notation, we also use $\mu$ to denote the corresponding linear functional on $B$ such that $\mu(f)=\int_{X}f\,d\mu$ for $f\in B$. For a subset $U$ of $X$, if an $f\in B$ satisfies that $0\leq f\leq 1$ and
$f|_U=1$, then we write it as
$U\prec f$. If $f$ satisfies that $0\leq f\leq 1$ and  ${\rm support of} \,f\subseteq U$, then we denote it by
$f\prec U$.

Before proceeding to the main theorem, we prove some preliminary lemmas first.
\begin{lemma}\label{natm1}
Suppose that a compact quantum group $\A$ acts on a compact Hausdorff space $X$ by $\alpha$. Take an $\alpha$-invariant measure $\mu$ on $X$. If for two points $x$ and $y$ in $X$,  we have that~$\mu_x>\mu_y$, then there exists an open neighborhood $V$ of $y$ satisfying that $(ev_x\otimes id)\alpha(g)=0$ for all $g\in B$ with $g\prec V$.
\end{lemma}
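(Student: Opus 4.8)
The plan is to exploit the invariance of $\mu$ together with positivity to force $(ev_x\otimes id)\alpha(g)$ to vanish whenever $g$ is supported near $y$. Recall that since $\mu$ is $\alpha$-invariant we have $\mu = (\mu\otimes h)\alpha = (\psi\otimes h)\alpha$ in the sense that $(\mu\otimes id)\alpha(f) = \mu(f)1_A$ for all $f\in B$; applying this identity to a nonnegative $f$ and then evaluating with a state on $A$ shows $\mu(f) = \mu\bigl((ev_z\otimes h)\alpha(f)\bigr)$-type relations. The key quantitative input is the following: for any open set $U\ni y$ and any $f\in B$ with $U\prec f$ (so $0\le f\le 1$, $f|_U=1$), the function $(ev_y\otimes id)\alpha(f)$ is a positive element of $A$ of norm at most $1$, and more importantly $h\bigl((ev_y\otimes id)\alpha(f)\bigr) = (ev_y\otimes h)\alpha(f) \ge \nu_y(f)/\text{(something)}$; but the cleaner route is to compare masses directly.

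First I would set $\delta = \mu_x - \mu_y > 0$ and choose, using regularity of $\mu$ and Urysohn's Lemma, a small open neighborhood $V$ of $y$ and a function $f\in B$ with $\{y\}\prec f\prec V$ (i.e. $0\le f\le 1$, $f(y)=1$, $\operatorname{supp} f\subseteq V$) such that $\mu(f) = \int_X f\,d\mu < \delta$, which is possible by shrinking $V$ so that $\mu(V) < \delta$. Now take any $g\in B$ with $g\prec V$; then $0\le g\le f'$ for a suitable such $f$ up to rescaling, so it suffices to treat $g$ with $0 \le g \le 1$ and $\operatorname{supp} g\subseteq V$, whence $\mu(g)\le \mu(V) < \delta = \mu_x-\mu_y$. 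Apply $\alpha$ to $g$: the element $(ev_x\otimes id)\alpha(g)\in A$ is positive, and applying the Haar state gives $h\bigl((ev_x\otimes id)\alpha(g)\bigr) = (ev_x\otimes h)\alpha(g) = \mu_x(g)$ — here I would use that $(ev_x\otimes h)\alpha$ is the invariant state $\mu_x = (ev_x\otimes h)\alpha$ attached to $x$, NOT the fixed measure $\mu$. Hmm — this is the delicate point, so let me restructure.

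The honest structure is: $\mu$ being $\alpha$-invariant gives $(\mu\otimes id)\alpha(g) = \mu(g)1_A$. Consider the positive functional $\phi = ev_x\cdot\mu_x + (\text{rest of }\mu)$; more directly, write $\mu = \mu_x\,ev_x + \mu_y\,ev_y + \mu'$ where $\mu'$ is the remaining positive measure. Feeding $g$ with $0\le g\le 1$, $\operatorname{supp} g\subseteq V$, $g(y) = 1$ into invariance: $\mu_x (ev_x\otimes id)\alpha(g) + \mu_y(ev_y\otimes id)\alpha(g) + (\mu'\otimes id)\alpha(g) = \mu(g)1_A \le \mu(V)1_A$. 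All three terms on the left are positive elements of $A$; applying the counit is not available (no coamenability assumed), so instead apply $h$: positivity of each term and faithfulness considerations won't directly give vanishing. The correct mechanism, which I expect is what the author intends, is: choose $V$ small enough that $\mu(\overline V) < \mu_x$; then for $g$ with $\{y\}\prec g \prec V$, invariance forces $\mu_x\,(ev_x\otimes id)\alpha(g) \le \mu_x(ev_x\otimes id)\alpha(g) + (\text{positive}) = \mu(g)1_A$, but $\mu(g)$ could be close to $\mu(V)$ which is $<\mu_x$ — this gives $(ev_x\otimes id)\alpha(g) \le \tfrac{\mu(V)}{\mu_x}1_A < 1_A$, not yet zero. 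To upgrade to exact vanishing I would iterate: replace $g$ by $g^n$; since $0\le g\le 1$ and $g$ is a genuine function, $g^n \prec V$ still and $g^n(y) = 1$, while $(ev_x\otimes id)\alpha(g^n)$ relates to powers only if $\alpha(g)$ is essentially multiplicative at $x$ — which it is precisely on the image, $\alpha_x = (ev_x\otimes id)\alpha$ being a $*$-homomorphism $B\to A$. Thus $(ev_x\otimes id)\alpha(g^n) = \alpha_x(g)^n$, and $\alpha_x(g)$ is a positive element with $h(\alpha_x(g)^n) = (ev_x\otimes h)\alpha(g^n) = \mu_x(g^n)\le$ ... and here I must use $(ev_x\otimes h)\alpha$, the measure $\mu_x$ from Theorem~\ref{support is minimal}, with total mass $1$; we know $\mu_x(g) \le$ something. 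The main obstacle is exactly this bookkeeping: relating the fixed invariant measure $\mu$ to the point-masses $\mu_x = (ev_x\otimes h)\alpha$ and squeezing out an honest zero rather than a small bound.

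Concretely, the cleanest finish I would pursue: since $\alpha_x$ is a unital $*$-homomorphism, $\|\alpha_x(g)\| = \|g\|_{C(\mathcal M_x)}$; if I can arrange $V\cap \mathcal M_x = \emptyset$ — equivalently $g|_{\mathcal M_x} = 0$ — then $\alpha_x(g) = 0$ immediately. So the real claim reduces to: when $\mu_x > \mu_y$, the point $y$ does not lie in $\mathcal M_x$. Suppose $y\in\mathcal M_x$; then $\mathcal M_y\subseteq\mathcal M_x$ (by Proposition~\ref{minimal invariant space is mx} applied with $Y = \mathcal M_x$, which is $\alpha$-invariant), and comparing the minimal invariant subsets ${\rm supp}\,\mu_x\subseteq\mathcal M_x$ from Theorem~\ref{support is minimal} with the mass distribution — on a minimal invariant finite-type piece the invariant measure is equidistributed (Proposition~\ref{orbits in finite spaces}) — yields $\mu_x = \mu_y$, a contradiction. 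Hence $y\notin\mathcal M_x$, and since $\mathcal M_x$ is closed we may take $V = X\setminus\mathcal M_x$, an open neighborhood of $y$; for any $g\prec V$ we have $g|_{\mathcal M_x} = 0$, so $\alpha_x(g^*g) = 0$ and therefore $(ev_x\otimes id)\alpha(g) = \alpha_x(g) = 0$. I expect the delicate step to be justifying $y\in\mathcal M_x \Rightarrow \mu_x = \mu_y$ cleanly without assuming faithfulness of $h$; the substitute is that $\mu_x$ and $\mu_y$, being invariant states for the ergodic induced action on ${\rm Orb}_x = {\rm Orb}_y$, must coincide by Theorem~\ref{ergodic and unique invariant measure}, forcing $\mu_x = \mu_y$ as functionals and in particular $\mu_x(e_x) = \mu_y(e_x)$, contradicting $\mu_x > \mu_y$ once one checks these point-masses are computed by the same invariant functional.
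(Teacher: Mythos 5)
Your final, committed argument has a genuine gap. You reduce the lemma to the claim that $\mu_x>\mu_y$ forces $y\notin\mathcal{M}_x$, but this claim is essentially equivalent to the lemma itself (by hull--kernel in $C(X)$, $\ker\alpha_x=J_{\mathcal{M}_x}$, so the lemma and the claim imply each other), and the justification you offer does not survive the generality of the statement. Here $\mu_x$ and $\mu_y$ are the point masses $\mu(\{x\})$, $\mu(\{y\})$ of one \emph{arbitrary} $\alpha$-invariant measure $\mu$; the action is not assumed ergodic, $X$ is an arbitrary compact Hausdorff space, and orbits need not be finite. Proposition~\ref{orbits in finite spaces} (equidistribution) is only available for finite spaces, and the characteristic functions $e_x$, $e_y$ you evaluate at the end are not continuous unless the points are isolated. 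Moreover, knowing $y\in\mathcal{M}_x\subseteq{\rm Orb}_x$ only gives $(ev_x\otimes h)\alpha=(ev_y\otimes h)\alpha$; it says nothing about the point masses of the given $\mu$, because nothing in the paper (or in your argument) shows that the restriction of $\mu$ to the orbit is invariant for the induced action, which is what you would need to invoke uniqueness via Theorem~\ref{ergodic and unique invariant measure}. So the step ``$y\in\mathcal{M}_x\Rightarrow\mu_x=\mu_y$'' is unproved, and with it the whole finish.

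Ironically, the route you abandoned midway is the one that works and is essentially the paper's proof. From invariance, $(\mu\otimes id)\alpha(f)=\mu(f)1_A$, and since $\mu\ge\mu_x\,ev_x$ as positive measures and $(ev_z\otimes id)\alpha(f)\ge0$ for every $z$, one gets for an open $U\ni y$ with $\mu(U)<\mu_x$ (regularity) and any $f\prec U$ a strict norm bound: the paper derives $\|(ev_x\otimes id)\alpha(f)\|<1$ by contradiction, pairing with a state that attains the norm, while your domination inequality $\mu_x\,(ev_x\otimes id)\alpha(f)\le\mu(f)1_A$ gives the same bound directly. The upgrade to exact vanishing is exactly your power trick, made rigorous by the sandwich $y\in V\subseteq K\subseteq U$ and Urysohn: choose $f$ with $K\prec f\prec U$, so that every $g\prec V$ satisfies $0\le g\le f^n$ for all $n$, whence $\|(ev_x\otimes id)\alpha(g)\|\le\|(ev_x\otimes id)\alpha(f)\|^n\to0$ (no multiplicativity of $\alpha_x$ is even needed, only positivity and submultiplicativity of the norm). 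Had you carried this through instead of pivoting to the $\mathcal{M}_x$ reduction, the proof would have been complete; note also that no Haar state, faithfulness, or ergodicity enters at this stage.
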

\begin{proof}
Note that $\mu$ is a state on $B$ and $X$ is a compact Haudorff space. Hence $\mu$ is a regular Borel measure on $X$ by the Riesz representation theorem. Since $\mu_x>\mu_y$, there exists an open neighborhood $U$ of $y$ such that $\mu_x>\mu(U)$.   We claim that
$$\|(ev_x\otimes id)\alpha(f)\|<1$$ for all $f\in B$ with $f\prec U$.
Since $0\leq f\leq 1$, we have that $\|(ev_x\otimes id)\alpha(f)\|\leq 1$. If $\|(ev_x\otimes id)\alpha(f)\|=1$, then there exists a state $\phi$ on $A$ such that $\phi((ev_x\otimes id)\alpha(f))=\|(ev_x\otimes id)\alpha(f)\|=1$ since $(ev_x\otimes id)\alpha(f)\geq 0$. Moreover,
\begin{align*}
(\mu\otimes \phi)\alpha(f)&=\phi((\mu\otimes id)\alpha(f))=\phi(\int_X\,(ev_x\otimes id)\alpha(f)\,d\mu)   \\ \notag
                          &\geq\phi((ev_x\otimes id)\alpha(f))\mu_x=\mu_x.
\end{align*}
Since $\mu$ is $\alpha$-invariant, on the other hand
$$(\mu\otimes \phi)\alpha(f)=\phi((\mu\otimes id)\alpha(f))=\phi(\mu(f)1_A)=\mu(f).$$  Therefore combining these, we get that $\mu(f)\geq \mu_x$. Since $f\prec U$, we also have that $\mu_x>\mu(U)\geq \mu(f)$. This leads to a contradiction.
Hence $\|(ev_x\otimes id)\alpha(f)\|<1$ for all $f\in B$ with $f\prec U$.

Since $X$ is a compact Hausdorff space, there exist an open subset $V$ and a compact subset $K$ of $X$ such that $y\in V\subseteq K\subseteq U$.

By  Urysohn's lemma, there is an $f\in B$ such that $K\prec f\prec U$. For any $g\in B$ with $g\prec V$,
we see that $0\leq g\leq f^n$ for every positive integer $n$.
Thus
$$\|(ev_x\otimes id)\alpha(g)\|\leq \|(ev_x\otimes id)\alpha(f^n)\|=\|(ev_x\otimes id)\alpha(f)\|^n\to 0$$ as $n\to\infty$.
Therefore $(ev_x\otimes id)\alpha(g)=0$.
\end{proof}

\begin{lemma}\label{natm2}
Suppose that a compact quantum group $\A$ has the faithful Haar measure and acts ergodically by $\alpha$ on a compact Hausdorff space $X$ with infinitely many points. Denote the unique $\alpha$-invariant measure on $X$ by $\mu$. Assume that there exists some $x\in X$ such that $\mu_x>0$.   
Let $E_1=\{y\in X|\mu_y= \max\{\mu_x|x\in X\}\}$. For any $f\in B$, if $f|_{E_1}=0$, we have $\alpha(f)=0$.
\end{lemma}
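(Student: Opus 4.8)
The plan is to reduce the statement to the equality $X_{\alpha}=\mathrm{supp}\,\mu$ together with the claim that $f|_{E_1}=0$ already forces $(ev_z\otimes id)\alpha(f)=0$ for one well-chosen point $z$.

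First I would record the set-up. Since $\mu$ is a probability measure, for every $c>0$ the set $\{x\in X:\mu_x\ge c\}$ is finite; as some atom is positive by hypothesis, $M:=\max\{\mu_x:x\in X\}$ is attained and $M>0$, so $E_1$ is a nonempty (finite) subset of $X$, and we fix $z\in E_1$, so that $\mu_z=M$. Ergodicity and Theorem~\ref{ergodic and unique invariant measure} show $\mu$ is the \emph{unique} $\alpha$-invariant state, and since $(ev_x\otimes h)\alpha$ is an $\alpha$-invariant state for every $x\in X$, we get $\mu=(ev_x\otimes h)\alpha$ for all $x$. Because $h$ is faithful, Theorem~\ref{support is minimal} gives $\mathrm{supp}\,\mu=\mathrm{supp}\big((ev_x\otimes h)\alpha\big)=\mathcal{M}_x$ for every $x$, whence $X_{\alpha}=\overline{\bigcup_{x\in X}\mathcal{M}_x}=\mathrm{supp}\,\mu$ by Proposition~\ref{a characterization of Xalpha}. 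As $\ker\alpha$ is exactly the ideal of functions vanishing on $X_{\alpha}$, it therefore suffices to prove that $f$ vanishes on $\mathrm{supp}\,\mu$, i.e.\ that $\mu(f^*f)=0$.

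The heart of the argument is to show $(ev_z\otimes id)\alpha(f)=0$; we may assume $0\le f\le 1$. For $\varepsilon>0$ let $K_\varepsilon=\{x\in X:f(x)\ge\varepsilon\}$, which is compact and, since $f$ vanishes on $E_1$, disjoint from $E_1$. For each $y\in K_\varepsilon$ we have $\mu_y<M=\mu_z$, so Lemma~\ref{natm1} supplies an open neighborhood $V_y$ of $y$ with $(ev_z\otimes id)\alpha(g)=0$ for all $g\prec V_y$. Covering $K_\varepsilon$ by finitely many $V_{y_1},\dots,V_{y_m}$, setting $V=\bigcup_i V_{y_i}$, and using a partition of unity subordinate to this cover, one upgrades this to $(ev_z\otimes id)\alpha(g)=0$ for every $g\prec V$. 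Choosing by Urysohn's lemma a $p_\varepsilon\in B$ with $K_\varepsilon\prec p_\varepsilon\prec V$, we have $fp_\varepsilon\prec V$, hence $(ev_z\otimes id)\alpha(fp_\varepsilon)=0$, while $\|f-fp_\varepsilon\|\le\varepsilon$ (the difference vanishes on $K_\varepsilon$ and is dominated by $|f|<\varepsilon$ elsewhere). Since $(ev_z\otimes id)\alpha$ is a $*$-homomorphism, hence contractive, this gives $\|(ev_z\otimes id)\alpha(f)\|\le\varepsilon$, and letting $\varepsilon\to 0$ yields $(ev_z\otimes id)\alpha(f)=0$.

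To finish, let $f\in B$ be arbitrary with $f|_{E_1}=0$. Then $f^*f$ is positive and also vanishes on $E_1$, so (after rescaling to norm $\le 1$) the previous step gives $(ev_z\otimes id)\alpha(f^*f)=0$, and hence $\mu(f^*f)=(ev_z\otimes h)\alpha(f^*f)=h\big((ev_z\otimes id)\alpha(f^*f)\big)=0$. Thus $f$ vanishes on $\mathrm{supp}\,\mu=X_{\alpha}$, i.e.\ $\alpha(f)=0$. I expect the main obstacle to be the third paragraph, namely passing from the purely local annihilation provided by Lemma~\ref{natm1} to annihilation of $f$ itself: this requires the compactness and partition-of-unity bookkeeping together with the approximation $f\approx fp_\varepsilon$. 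The rest is assembling results already proved in the paper.
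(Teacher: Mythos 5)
Your proof is correct, and its core is the same as the paper's: at a point $z$ of maximal atom mass you use Lemma~\ref{natm1} to annihilate, under $(ev_z\otimes id)\alpha$, every function supported near a point outside $E_1$, and then combine compactness, a partition of unity and an $\varepsilon$-approximation (you approximate $f$ by $fp_\varepsilon$ supported off $E_1$, whereas the paper covers all of $X$ and splits $f=\sum_V fg_V$, keeping only the terms near $E_1$, which are of size at most $\varepsilon$) to conclude $(ev_z\otimes id)\alpha(f)=0$; both versions of this bookkeeping are sound. The only genuine divergence is the endgame: the paper finishes directly, using ergodicity to get $(ev_x\otimes h)\alpha(f^*f)=(ev_z\otimes h)\alpha(f^*f)=0$ for every $x$ and faithfulness of $h$ to force $(ev_x\otimes id)\alpha(f^*f)=0$ pointwise, hence $\alpha(f)=0$; you instead deduce $\mu(f^*f)=0$ and identify ${\rm supp}\,\mu=\mathcal{M}_x=X_{\alpha}$ via Theorem~\ref{support is minimal} and Proposition~\ref{a characterization of Xalpha}, together with the standard Gelfand correspondence $\ker\alpha=J_{X_{\alpha}}$. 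Your route is a bit heavier but valid and non-circular (those results are proved earlier and independently), and it has the small virtue of making explicit that, for an ergodic action with faithful Haar state, $\ker\alpha$ consists exactly of the functions vanishing on ${\rm supp}\,\mu$.
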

\begin{proof}
First $E_1$ is a finite subset of $X$ since  $\mu$ is a finite measure on $X$. Let $E_1=\{x_1,...,x_n\}$ and $ev_i=ev_{x_i}$ for $1\leq i\leq n$. For any $\epsilon>0$, there exists an open neighborhood $V_i$ of $x_i$ for each $x_i\in E_1$ such that $|f(x)|<\epsilon$ for all $x\in \bigcup_{i=1}^n V_i$. For any $y\notin E_1$, by Lemma~\ref{natm1}, there exists an open neighborhood $V_y$ of $y$ such that $V_y\bigcap E_1=\emptyset$ and $(ev_i\otimes id)\alpha(g)=0$ for all $g\in B$ with $g\prec V_y$ and  all $1\leq i\leq n$. Then $\mathcal{V}=\{V_y\}_{y\notin E_1}\bigcup\{V_i\}_{i=1}^n$ is an open cover of $X$. Since $X$ is a compact Hausdorff space, there exists a finite subcover $\mathcal{V}'$ of $\mathcal{V}$. Let $\{g_V\}_{V\in\mathcal{V}'}$ be a partition of unity of $X$ subordinate to $\mathcal{V}'$. Then $f=\sum_{V\in\mathcal{V}'}fg_V$.

Now let $i=1$ for convenience. By Lemma~\ref{natm1}, we have that $(ev_1\otimes id)\alpha(g_V)=0$ for all $V\in \mathcal{V}'\setminus\{V_i\}_{i=1}^n$. Hence
\begin{align*}
(ev_1\otimes id)\alpha(f)&=(ev_1\otimes id)\alpha(\sum_{V\in\mathcal{V}'}fg_V)     \\
&=\sum_{V\in\mathcal{V}'\bigcap\{V_i\}_{i=1}^n}(ev_1\otimes id)\alpha(fg_V)+\sum_{V\in\mathcal{V}'\setminus\{V_i\}_{i=1}^n}(ev_1\otimes id)\alpha(fg_V)  \\
&=\sum_{V\in\mathcal{V}'\bigcap\{V_i\}_{i=1}^n}(ev_1\otimes id)\alpha(fg_V).
\end{align*}

Take any $x\in X$. If $x\in \bigcup_{i=1}^n V_i$, then
$|\sum_{V\in\mathcal{V}'\bigcap\{V_i\}_{i=1}^n}f(x)g_V(x)|\leq |f(x)|<\epsilon$. If $x\notin \bigcup_{i=1}^n V_i$, then $\sum_{V\in\mathcal{V}'\bigcap\{V_i\}_{i=1}^n}f(x)g_V(x)=0$. Therefore $\|\sum_{V\in\mathcal{V}'\bigcap\{V_i\}_{i=1}^n}fg_V\|\leq \epsilon$.

Thus
$$\|(ev_1\otimes id)\alpha(f)\|=\|(ev_1\otimes id)\alpha(\sum_{V\in\mathcal{V}'\bigcap\{V_i\}_{i=1}^n}fg_V)\|\leq\epsilon.$$
Since $\epsilon$ is arbitrary, we have that $(ev_1\otimes id)\alpha(f)=0$.  Note that $(ev_1\otimes id)\alpha$ is a $*$-homomorphism, so $(ev_1\otimes id)\alpha(f^*f)=0$. The action $\alpha$ is ergodic, hence $(ev_x\otimes h)\alpha(f^*f)=(ev_1\otimes h)\alpha(f^*f)=0$ for any $x\in X$. The Haar measure $h$ is faithful and $(ev_x\otimes id)\alpha(f^*f)\geq 0$, therefore $(ev_x\otimes id)\alpha(f^*f)=0$ for all $x\in X$ which means $\alpha(f)=0$.
\end{proof}
Now we are ready to prove the main theorem in this subsection.
\begin{proof}[Proof of Theorem~\ref{nonatomic inv measure}]\
We can assume  the Haar measure $h$ of $\A$~is faithful otherwise we replace $\alpha$ by the reduced compact quantum group action $\alpha_r$ of $\mathcal{G}_r$ which has the faithful Haar measure. The action $\alpha_r$ is also ergodic by Corollary~\ref{ergodic iff reduced ergodic}. Moreover, a state on $B$ is $\alpha$-invariant if and only if it is $\alpha_r$-invariant~(see the argument preceding Corollary~\ref{ergodic iff reduced ergodic}).

Suppose that $\mu(\{x\})>0$ for some $x\in X$. Define $E_1=\{x_1,...,x_n\}$  as in Lemma~\ref{natm2}. Let $\mathscr{B}$ be the Podl\'{e}s algebra of $B=C(X)$. Define a linear map $T$ from $\alpha(\mathscr{B})$ into $\Complex^n$ by
$$T(\alpha(f))=(f(x_1),f(x_2),...,f(x_n))$$ for all $f\in\mathscr{B}$. Note that $\alpha$ is injective on $\mathscr{B}$. So $T$ is well-defined. Also $T$ is linear. By Lemma~\ref{natm2}, $T$ is injective. The space $X$ contains infinitely many points, hence $B$ is infinite dimensional. Since $\mathscr{B}$ is a dense subspace of $B$, we have that $\mathscr{B}$ is also infinite dimensional. This leads to a contradiction to that $\Complex^n$ is finite dimensional and that $T$ is injective.
\end{proof}

\subsection{Actions on countable compact Hausdorff spaces}

In this subsection, we consider compact quantum group actions on a compact Hausdorff space $X_\infty$ with countably infinitely many points. Within the section, the notation $B$ stands for $C(X_\infty)$. Denote by $X_I$ the set of isolated points and by $X_A$ the set of accumulation points of $X_\infty$.

The main theorem of this subsection follows directly from Theorem~\ref{nonatomic inv measure}.
\begin{theorem}\label{quantum homogenous space}
$X_\infty$ is not a quantum homogeneous space.
\end{theorem}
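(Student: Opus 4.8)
The plan is to derive Theorem~\ref{quantum homogenous space} as an immediate consequence of Theorem~\ref{nonatomic inv measure} together with countable additivity of measures. I would argue by contradiction: suppose $X_\infty$ is a quantum homogeneous space, so that some compact quantum group $\mathcal{G}$ acts on $X_\infty$ ergodically by an action $\alpha$ on $B=C(X_\infty)$.

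By Theorem~\ref{ergodic and unique invariant measure} there is a unique $\alpha$-invariant state $\mu$ on $B$, which via the Riesz representation theorem I identify with a regular Borel probability measure on $X_\infty$. Since $X_\infty$ has infinitely many points, Theorem~\ref{nonatomic inv measure} applies and yields that $\mu$ is non-atomic, that is, $\mu(\{x\})=0$ for every $x\in X_\infty$.

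Now I invoke countability of $X_\infty$. As $X_\infty$ is Hausdorff, each singleton $\{x\}$ is closed, hence Borel, and $X_\infty$ is a countable disjoint union of these singletons. Countable additivity of $\mu$ then gives
\[
1=\mu(X_\infty)=\sum_{x\in X_\infty}\mu(\{x\})=0,
\]
a contradiction. Therefore no ergodic compact quantum group action on $X_\infty$ exists, and $X_\infty$ is not a quantum homogeneous space.

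All the substantive work sits in Theorem~\ref{nonatomic inv measure} (and behind it Lemmas~\ref{natm1} and~\ref{natm2}); the only points requiring any care in this last step are the passage from the invariant state to a genuine countably additive Borel measure and the measurability of singletons, both of which are standard in the compact Hausdorff setting. Consequently there is no real obstacle here — it is purely a bookkeeping argument combining the non-atomicity theorem with the fact that a countably infinite set cannot support a non-atomic Borel probability measure.
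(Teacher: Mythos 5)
Your proposal is correct and is essentially the same argument as the paper's: the paper also deduces the theorem from Theorem~\ref{nonatomic inv measure}, noting that every Borel probability measure on a countable space must have an atom (by countable additivity), so an ergodic action on $X_\infty$ is impossible. The only difference is presentational — you phrase it as a contradiction with $1=\sum_x\mu(\{x\})=0$, while the paper states the atom existence directly.
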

\begin{proof}
For every Borel probability measure $\mu$ on $X_\infty$, there exists an $x\in X_\infty$ such that $\mu(\{x\})>0$. So by Theorem~\ref{nonatomic inv measure}, the space $X_\infty$ cannot admit an ergodic compact quantum group action.
\end{proof}

\begin{corollary}\label{acc2}
For any compact quantum group action $\alpha$ on $X_\infty$, every orbit is finite.
\end{corollary}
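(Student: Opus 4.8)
The plan is to reduce the statement about an arbitrary orbit to the known behavior of ergodic actions on the orbit itself, and to pull in Theorem~\ref{nonatomic inv measure} (or rather its underlying mechanism) to rule out infinite orbits. First I would fix $x\in X_\infty$ and consider the orbit $Y:={\rm Orb}_x$, which by Theorem~\ref{invariance of orbit} is an $\alpha$-invariant closed subset of $X_\infty$; hence $\alpha$ induces an action $\delta_x$ of $\mathcal{G}$ on $C(Y)$, and by Lemma~\ref{induced action on orbit is ergodic} this induced action $\delta_x$ is ergodic. Now $Y$ is a closed subset of a countable compact Hausdorff space, so $Y$ is itself a countable compact Hausdorff space (possibly finite).

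The key dichotomy is whether $Y$ is finite or countably infinite. If $Y$ were countably infinite, then $Y$ would be a compact Hausdorff space with countably infinitely many points carrying the ergodic action $\delta_x$; but Theorem~\ref{quantum homogenous space} says no such space is a quantum homogeneous space, i.e.\ admits an ergodic compact quantum group action — contradiction. Therefore $Y$ must be finite, which is exactly the assertion. So the proof is essentially a one-line consequence of Theorem~\ref{invariance of orbit}, Lemma~\ref{induced action on orbit is ergodic}, and Theorem~\ref{quantum homogenous space} applied to $C(Y)$.

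The only point that needs care — and the main (minor) obstacle — is confirming that a closed subset $Y$ of the countable compact Hausdorff space $X_\infty$ is again a legitimate instance to which Theorem~\ref{quantum homogenous space} applies: namely that $Y$ is compact (closed subset of a compact space), Hausdorff (subspace of a Hausdorff space), and has \emph{countably} many points (subset of a countable set). The dividing line is finite versus countably infinite; Theorem~\ref{quantum homogenous space} only excludes the countably infinite case, so one must observe that if $Y$ is infinite it is automatically countably infinite, and then the contradiction applies. There is no serious computational content here — it is a clean corollary.

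I would write it as follows. Fix $x\in X_\infty$ and put $Y={\rm Orb}_x$. By Theorem~\ref{invariance of orbit}, $Y$ is an $\alpha$-invariant closed subset of $X_\infty$, so the induced action $\delta_x$ of $\mathcal{G}$ on $C(Y)$ is defined, and by Lemma~\ref{induced action on orbit is ergodic} it is ergodic. Since $Y$ is a closed subset of $X_\infty$, it is a compact Hausdorff space with at most countably many points; if $Y$ were infinite it would have countably infinitely many points, and Theorem~\ref{quantum homogenous space} would be contradicted by the existence of the ergodic action $\delta_x$ on $C(Y)$. Hence $Y={\rm Orb}_x$ is finite. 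As $x\in X_\infty$ was arbitrary, every orbit is finite.
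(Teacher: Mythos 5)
Your argument is correct and coincides with the paper's own proof: both invoke Theorem~\ref{invariance of orbit} to see the orbit is invariant, Lemma~\ref{induced action on orbit is ergodic} for ergodicity of the induced action, and Theorem~\ref{quantum homogenous space} to rule out a countably infinite orbit. The extra care you take about the orbit being closed, compact, Hausdorff and at most countable is exactly the (implicit) observation in the paper's proof.
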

\begin{proof}
 By Theorem~\ref{invariance of orbit}, every orbit is an $\alpha$-invariant subset on which the induced action is ergodic by Lemma~\ref{induced action on orbit is ergodic}. Notice that every orbit is also countable. Therefore it cannot be infinite by  Theorem~\ref{quantum homogenous space}.
\end{proof}

\begin{proposition}\label{every orbit in X infty is finite}
Every compact quantum group action $\alpha$ on a countable compact space is injective.
\end{proposition}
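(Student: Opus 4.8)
The plan is to show that the action $\alpha$ on $B=C(X_\infty)$ is injective by identifying the kernel with an invariant ideal $J_Y$ for $Y=X_\alpha$ and proving $Y=X_\infty$. By Theorem~\ref{a characterization of injectivity of alpha}, it suffices to show $X_\alpha=X_\infty$, equivalently that $X_\alpha$ cannot be a proper closed subset. Recall from Proposition~\ref{Xalpha is invariant and infinite} that $X_\alpha$ is $\alpha$-invariant and, since $X_\infty$ is infinite, $X_\alpha$ is infinite; and since $X_\infty$ is countable, $X_\alpha$ is itself a countable compact Hausdorff space with infinitely many points.

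First I would argue by contradiction: suppose $X_\alpha\subsetneq X_\infty$, so $J=\ker\alpha=J_{X_\alpha}\neq 0$. The idea is to produce a nonzero invariant state supported off $X_\alpha$ and derive a contradiction with ergodicity of induced actions, or more directly to exploit the structure of countable compact spaces. Actually the cleanest route uses orbits: by Corollary~\ref{acc2}, every orbit ${\rm Orb}_x$ is finite, and by Theorem~\ref{invariance of orbit} it is an $\alpha$-invariant subset, so by Proposition~\ref{a characterization of Xalpha} we have $X_\alpha=\overline{\bigcup_{x\in X_\infty}\mathcal{M}_x}$ with each $\mathcal{M}_x\subseteq{\rm Orb}_x$ finite. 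The key point is that if $x\in{\rm Orb}_x$ always, then $x\in\mathcal{M}_x$ would force $\bigcup_x\mathcal{M}_x=X_\infty$; but $\mathcal{M}_x$ need not contain $x$ in general, so I must work with orbits directly. Since every orbit is finite and the induced action $\delta_x$ on ${\rm Orb}_x$ is ergodic by Lemma~\ref{induced action on orbit is ergodic}, Corollary~\ref{finite is minimal} applies: ${\rm supp}\mu_x=\mathcal{M}_x={\rm Orb}_x$ for the action on ${\rm Orb}_x$, which in particular gives $x\in\mathcal{M}_x$ once we interpret $\mathcal{M}_x$ relative to the restricted action. Matching this with the original $\mathcal{M}_x$ via Proposition~\ref{minimal invariant space is mx} (which gives $\mathcal{M}_x\subseteq{\rm Orb}_x$ and, since ${\rm Orb}_x$ is a finite hence minimal invariant subset, $\mathcal{M}_x={\rm Orb}_x\ni x$) yields $x\in\mathcal{M}_x$ for every $x$.

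Hence $\bigcup_{x\in X_\infty}\mathcal{M}_x=X_\infty$, so $X_\alpha=\overline{X_\infty}=X_\infty$, and by Theorem~\ref{a characterization of injectivity of alpha} the action $\alpha$ is injective. I expect the main obstacle to be verifying carefully that ${\rm Orb}_x$ is a \emph{minimal} invariant subset: finiteness of the orbit together with Lemma~\ref{induced action on orbit is ergodic} and Theorem~\ref{support is minimal} (which says ${\rm supp}\mu_x$ is minimal, and on a finite orbit Corollary~\ref{finite is minimal} forces ${\rm supp}\mu_x$ to be all of ${\rm Orb}_x$) should do it, but one must chase through the identifications of the induced actions and the fact that $x\in{\rm Orb}_x$ so that Proposition~\ref{minimal invariant space is mx} can be applied with $Y={\rm Orb}_x$. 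Once minimality is in hand, $\mathcal{M}_x=Y=\mathcal{M}_x\ni x$ is immediate and the rest is a one-line invocation of Theorem~\ref{a characterization of injectivity of alpha}.
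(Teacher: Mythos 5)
Your argument is correct and follows essentially the same route as the paper: every orbit is finite by Corollary~\ref{acc2}, hence minimal by Corollary~\ref{finite is minimal} (via the ergodicity of the induced action from Lemma~\ref{induced action on orbit is ergodic} and Theorem~\ref{support is minimal}), so Proposition~\ref{minimal invariant space is mx} gives $\mathcal{M}_x={\rm Orb}_x\ni x$, and Theorem~\ref{a characterization of injectivity of alpha} yields injectivity. The only difference is presentational: you carry out explicitly the identification of the support and of $\mathcal{M}_x$ relative to the induced action on the orbit, which the paper leaves implicit.
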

\begin{proof}
Note that $X_\infty=\bigcup_{x\in X}{\rm Orb}_x$. Corollary~\ref{acc2} says that every ${\rm Orb}_x$ is finite, so by Corollary~\ref{finite is minimal} we see that ${\rm Orb}_x$ is minimal. By Proposition~\ref{minimal invariant space is mx}, we have ${\rm Orb}_x=\mathcal{M}_x$ for every $x\in X$. So $X_\infty=\bigcup_{x\in X}\mathcal{M}_x$. By Theorem~\ref{a characterization of injectivity of alpha}, the action $\alpha$ is injective.
\end{proof}

We need the following property concerning $X_\infty$ for the next result.

\begin{lemma}\label{structure of X infty}
The subset $X_I$ is dense in $X_\infty$, hence infinite.
\end{lemma}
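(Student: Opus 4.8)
The plan is to show that the set $X_I$ of isolated points of $X_\infty$ is dense, using a Baire-category argument. Recall $X_\infty$ is a countable compact Hausdorff space; since it is compact Hausdorff it is a Baire space. Suppose toward a contradiction that $X_I$ is not dense, so $Y := X_\infty \setminus \overline{X_I}$ is a nonempty open subset of $X_\infty$ containing no isolated points of $X_\infty$. First I would observe that $Y$, being open in a Baire space, is itself a Baire space in the subspace topology (an open subspace of a Baire space is Baire), and it is again countable.

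Next I would derive the contradiction by showing a nonempty countable Baire space must have an isolated point, and that such a point is in fact isolated in $X_\infty$. Write $Y = \{y_1, y_2, \dots\}$. If $Y$ had no isolated point (in its own topology), then each singleton $\{y_n\}$ would be a closed set with empty interior in $Y$, hence nowhere dense in $Y$; but then $Y = \bigcup_n \{y_n\}$ would be meager in itself, contradicting that $Y$ is a nonempty Baire space. So $Y$ has a point $y$ that is isolated in $Y$. Since $Y$ is open in $X_\infty$, a set that is open in $Y$ is open in $X_\infty$, so $\{y\}$ is open in $X_\infty$, i.e. $y$ is an isolated point of $X_\infty$. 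But $y \in Y = X_\infty \setminus \overline{X_I}$, so $y \notin X_I$, a contradiction. Therefore $\overline{X_I} = X_\infty$, i.e. $X_I$ is dense.

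Finally, $X_I$ must be infinite: if $X_I$ were finite it would be closed, so its closure $\overline{X_I} = X_I$ would be a proper subset of $X_\infty$ (since $X_\infty$ is infinite), contradicting density. This finishes the lemma.

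The only real subtlety — the step I would be most careful with — is the invocation of the Baire category theorem and the passage to the open subspace $Y$: one must check that an open subspace of a Baire space is Baire, and that "nowhere dense in $Y$'' together with countability of $Y$ forces $Y$ to be meager in itself, which is exactly what the Baire property of $Y$ forbids. Everything else is routine point-set topology; no facts about the quantum group action are needed here.
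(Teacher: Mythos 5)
Your proof is correct, but it takes a different route from the paper's. You argue via Baire category: $X_\infty$ is compact Hausdorff, hence Baire; the open set $Y=X_\infty\setminus\overline{X_I}$ is then a Baire space, and if it were nonempty, countability plus the absence of isolated points would make $Y$ meager in itself (a countable union of closed nowhere dense singletons), so $Y$ must contain a point isolated in $Y$, hence isolated in $X_\infty$ since $Y$ is open --- contradicting $Y\cap X_I=\emptyset$. The paper instead shows directly that every point of $E=X_\infty\setminus\overline{X_I}$ is an accumulation point of $E$ (so $E$ is dense-in-itself, called ``perfect'' there) and then invokes the cited set-theoretic fact that a perfect set in a locally compact Hausdorff space has cardinality at least that of $\mathbb{R}$, contradicting countability of $X_\infty$. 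Both proofs exploit the same tension between countability and being crowded in a compact Hausdorff space; yours is self-contained modulo the Baire category theorem and produces the contradiction constructively (an explicit isolated point inside $Y$), whereas the paper outsources the uncountability to an external cardinality theorem. You also spell out the ``hence infinite'' step (a finite $X_I$ would be closed, so not dense in the infinite space $X_\infty$), which the paper leaves implicit; all the point-set facts you rely on (open subspaces of Baire spaces are Baire, singletons are closed in Hausdorff spaces) are standard and correctly applied.
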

\begin{proof}
Let  $E=X_\infty\backslash \overline{X_I}$. We want to show that $E$ is empty. Suppose that $E$ is nonempty.
For any $x\in E$, we claim that every neighborhood of $x$ in $X_\infty$ contains a point in $E$ other than $x$. If this is not true, there exists a neighborhood $A$ of $x$ such that $A\bigcap E=\{x\}$. That is, $A\backslash \{x\}\subseteq \overline{X_I}$. Since $x\in X_A$, we have that $x\in\overline{A\backslash\{x\}}\subseteq \overline{X_I}$. This is a contradiction to $x\in E$.
Therefore $E$ is perfect, but countable. This leads to a contradiction to~\cite[Theorem 4.5]{Jech}, which says that  every perfect set in a locally compact Hausdorff space has at least the cardinality of $\mathbb{R}$.
\end{proof}
The second main result in this section is the following.
\begin{proposition}\label{acp3}
Every orbit is either contained in $X_I$, or contained in $X_A$.
\end{proposition}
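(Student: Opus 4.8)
The plan is to show that an orbit cannot simultaneously contain an isolated point and an accumulation point. Suppose, for contradiction, that ${\rm Orb}_x$ contains a point $p\in X_I$ and a point $q\in X_A$. Since $q$ is an accumulation point, every neighborhood of $q$ in $X_\infty$ contains infinitely many points, whereas $\{p\}$ is open in $X_\infty$, so $e_p\in B$. First I would note that by Corollary~\ref{acc2} the orbit ${\rm Orb}_x$ is finite; write ${\rm Orb}_x=\{y_1,\dots,y_m\}$. By Theorem~\ref{invariance of orbit} the orbit is an $\alpha$-invariant subset, and by Lemma~\ref{induced action on orbit is ergodic} the induced action $\delta_x$ of $\mathcal{G}$ on ${\rm Orb}_x$ is ergodic. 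Thus ${\rm Orb}_x$ is a finite space carrying an ergodic compact quantum group action.

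The key step is to transport topological information about $X_\infty$ into the finite picture. Consider the characteristic function $e_p$ of the isolated point $p$; it lies in $B=C(X_\infty)$, and its restriction to ${\rm Orb}_x$ is the characteristic function of the singleton $\{p\}$ inside the finite space ${\rm Orb}_x$. Applying the matrix coefficients from Section~4.3 to the ergodic action $\delta_x$, Theorem~\ref{erdodic action on finite spaces} (equivalence of ergodicity with $h(a_{ij})=1/m$ and with all $a_{ij}\neq 0$), together with Proposition~\ref{orbits in finite spaces}, tells us that $(ev_{y_i}\otimes id)\delta_x(e_p)=a_{ip}\neq 0$ for every $y_i\in{\rm Orb}_x$; in particular $(ev_q\otimes id)\alpha(e_p)=a_{qp}\neq 0$, where I identify the relevant matrix coefficient with the one computed for $\delta_x$ via the equivariance $\delta_x(f+J_x)=(\pi_x\otimes id)\alpha(f)$.

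The contradiction I aim for comes from the fact that $q$ is an accumulation point: around $q$ the space $X_\infty$ admits arbitrarily small neighborhoods, and the only way $e_p$ can ``spread'' a nonzero coefficient onto $q$ while also being symmetric under the orbit structure is if the orbit-neighborhood of $q$ behaves like that of the isolated point $p$. Concretely, I would use Lemma~\ref{natm1}-type reasoning: since the unique $\delta_x$-invariant measure $\mu_x$ on the finite orbit assigns mass $1/m$ to each point (Theorem~\ref{erdodic action on finite spaces}), while the structure near $q$ in $X_\infty$ forces the ambient $\alpha$-invariant state built from $q$ to charge $q$ positively but isolated-point behavior to charge $p$ by a prescribed amount — and by ergodicity on the orbit these must coincide. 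But an isolated point and an accumulation point cannot have neighborhood bases of the same ``combinatorial type'': shrinking a neighborhood of $q$ produces a properly smaller open set still containing infinitely many points of $X_\infty$, while $\{p\}$ is already minimal. Pushing this through the equivariance $\delta_x$ and the density of $\alpha(B)(1\otimes A)$ yields the contradiction.

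The main obstacle I anticipate is making precise the claim that membership of $p$ and $q$ in a common orbit is incompatible with $p$ being isolated and $q$ being an accumulation point — i.e., producing a genuinely topological contradiction rather than a measure-theoretic one, since on a finite orbit all points look alike. The cleanest route is probably: if ${\rm Orb}_x$ contains an isolated point $p$ of $X_\infty$, then for each $y\in{\rm Orb}_x$ the function $\alpha(e_p)(y)=a_{yp}$ is a nonzero projection, and one can argue (using that $e_p$ is both open and closed, so $e_p$ is idempotent in $B$ and $\sum_{y'\in{\rm Orb}_x\cap\,?}$-type sum relations hold from $\sum_j a_{ij}=1$) that every point of ${\rm Orb}_x$ is isolated in $X_\infty$; dually, if some point of ${\rm Orb}_x$ is an accumulation point, one shows via Lemma~\ref{structure of X infty} (density of $X_I$) together with $\alpha$-invariance of ${\rm Orb}_x$ that the whole orbit lies in $X_A$. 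I would organize the write-up around proving the first of these two implications carefully — that the orbit of an isolated point consists entirely of isolated points — and then deduce the statement, since the two cases $({\rm Orb}_x\cap X_I\neq\emptyset$ and its complement$)$ are then exhaustive and mutually exclusive.
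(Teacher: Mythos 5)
Your reductions are all correct and coincide with the paper's: orbits are finite (Corollary~\ref{acc2}), invariant (Theorem~\ref{invariance of orbit}), carry an ergodic induced action (Lemma~\ref{induced action on orbit is ergodic}), and consequently $a_{qp}=(ev_q\otimes id)\alpha(e_p)\neq 0$ whenever $p$ and $q$ lie in a common orbit (Theorem~\ref{erdodic action on finite spaces}, Proposition~\ref{orbits in finite spaces}). The gap is that you never actually produce the contradiction, i.e.\ you never prove $a_{qp}=0$ when $q\in X_A$ and $p\in X_I$. As you yourself observe, inside the finite orbit all points look alike, so no reasoning confined to the orbit (the invariant measure giving each point mass $1/m$, Lemma~\ref{natm1}-type estimates, or the relation $\sum_j a_{ij}=1$, which is only available for the induced action on the finite orbit and says nothing about the topology of $X_\infty$ near $q$) can distinguish $p$ from $q$. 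The sentence about isolated and accumulation points not having ``neighborhood bases of the same combinatorial type'' restates what must be proved rather than proving it, and your ``cleanest route'' paragraph again only asserts the two desired implications without supplying a mechanism.

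The missing idea, which is the heart of the paper's proof, is to compare the given orbit with \emph{other} orbits inside the ambient space. Fix $p\in X_I$. For any isolated point $z\notin{\rm Orb}_p$, the finite set $Z'={\rm Orb}_p\cup{\rm Orb}_z$ is $\alpha$-invariant (Proposition~\ref{union is invariant}), the induced action on $Z'$ is an action on a finite space whose orbits are exactly ${\rm Orb}_p$ and ${\rm Orb}_z$ (Proposition~\ref{Inv subset's orbit}), and Proposition~\ref{orbits in finite spaces} then forces $a_{zp}=0$. Hence the map $z\mapsto a_{zp}$, which is continuous because $\alpha(e_p)\in C(X_\infty)\otimes A\cong C(X_\infty,A)$, vanishes at every isolated point outside the finite set ${\rm Orb}_p$. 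Since $X_I$ is dense (Lemma~\ref{structure of X infty}), one can choose isolated points $z_k\to q$ avoiding the finite set ${\rm Orb}_p$, and continuity gives $a_{qp}=\lim_k a_{z_k p}=0$, the desired contradiction. (The paper phrases this as proving ${\rm Orb}_x\subseteq X_A$ for $x\in X_A$, and therefore also passes to the reduced action so that faithfulness of $h$ spreads the vanishing over the whole orbit; in your proof-by-contradiction formulation that last step can be dispensed with, but the two-orbit argument and the density-plus-continuity limit cannot.)
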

\begin{proof}

We can assume the faithfulness of the Haar measure $h$ since by Proposition~\ref{orbit unchange under reduced action}, for any point $x\in X_\infty$, the orbit of $x$ under the action of $\mathcal{G}$ is the same as the orbit of $x$ under the action of $\mathcal{G}_r$.

It suffices to show that ${\rm Orb}_{x}\subseteq X_A$ for $x\in X_A$, which is equivalent to that $e_y|_{{\rm Orb}_{x}}=0$ for all $y\in X_I$ since $X_A$ is closed. Now consider the induced action $\alpha_x$ on ${\rm Orb}_{x}$. Recall that $\alpha_x(f+J)=(\pi\otimes id)\alpha(f)$ for all $f\in B$, where $J$ is the ideal consisting of functions varnishing on ${\rm Orb}_{x}$ and $\pi$ is the quotient map from $B$ onto $B/J$. Since ${\rm Orb}_{x}$ is finite, $\alpha_x$ is injective by Proposition~\ref{every orbit in X infty is finite}. So  $e_y|_{{\rm Orb}_{x}}=0$ for all $y\in X_I$ is equivalent to say that $\alpha_x(e_y)=0$ for all $y\in X_I$. Since $\alpha_x(e_y)(z)=\alpha(e_y)(z)=a_{zy}$ for all $y\in X_I$ and $z\in {\rm Orb}_{x}$,  it suffices to show that $a_{zy}=0$ for every $z$ in ${\rm Orb}_{x}$ and $y$ in $X_I$.

Let $Z'={\rm Orb}_{y}\bigcup{\rm Orb}_{z}$ for $y$ and $z$ in  $X_I$ such that ${\rm Orb}_{y}\bigcap {\rm Orb}_{z}=\emptyset$. Note that every orbit is finite and $\alpha$-invariant. So $Z'$ is $\alpha$-invariant by Proposition~\ref{union is invariant}. Consider the induced action $\alpha_{Z'}$ of $\alpha$ on $Z'$. By Proposition~\ref{Inv subset's orbit}, $Z'$ consists of only two orbits, ${\rm Orb}_{y}$ and ${\rm Orb}_{z}$. Denote $(ev_z\otimes id)\alpha_{Z'}(e_y)$ by $b_{zy}$. Note that $\alpha_{Z'}$ is an action on a finite space. By Proposition~\ref{orbits in finite spaces}, we have that $b_{zy}=0$ since $z$ and $y$ are not in the same orbit under $\alpha_{Z'}$. Observe that $b_{zy}=a_{zy}$. Therefore, by the finiteness of every orbit, after fixing $y$ in $X_I$, we get that $a_{zy}=0$ for all but finitely many $z\in X_I$. By Lemma~\ref{structure of X infty}, for every $x$ in  $X_A$, we can find a sequence $\{z_k\}_{k\geq1}\subseteq X_I$ converging to $x$. Then $a_{z_ky}=0$ for sufficiently large $k$. Therefore $a_{xy}=\lim_{k\to \infty}a_{z_ky}=0$. Since $\alpha_x$ is ergodic,
$$0=h(a_{xy})=(ev_x\otimes h)\alpha_x(e_y)=(ev_z\otimes h)\alpha_x(e_y)=h(a_{zy}),$$ for any $z\in {\rm Orb}_{x}$ and $y\in X_I$. Note that $h$ is faithful. Hence $a_{zy}=0$ for all  $z\in {\rm Orb}_{x}$ and $y\in X_I$, which completes the proof.
\end{proof}

\end{document}